\newtheorem{thm}{\bf Theorem}[section]
\newtheorem{df}[thm]{\bf Definition}
\newtheorem{prop}[thm]{\bf Proposition}
\newtheorem{cor}[thm]{\bf Corollary}
\newtheorem{lem}[thm]{\bf Lemma}
\newtheorem{rem}[thm]{\bf Remark}
\newtheorem{ex}[thm]{\bf Example}
\numberwithin{equation}{section}
\newcommand{\mr}{\mathring}
\newcommand{\ot}{\otimes}
\newcommand{\wtd}{\widetilde}
\newcommand{\ov}{\overline}
\newcommand{\mc}{\mathcal}
\newcommand{\mf}{\mathfrak}
\newcommand{\ms}{\mathscr}
\newcommand{\blue}[1]{{\color{blue}#1}}
\newcommand{\N}{\mathbb{N}}
\newcommand{\Z}{\mathbb{Z}}
\newcommand{\Q}{\mathbb{Q}}
\newcommand{\C}{\mathbb{C}}
\newcommand{\te}{\widetilde{e}}
\newcommand{\bk}{\mathbf{k}}
\newcommand{\g}{\mathfrak{g}}
\newcommand{\bo}{\mathfrak{b}}
\newcommand{\h}{\mathfrak{h}}
\newcommand{\hd}{\mathfrak{h}^*}
\newcommand{\cg}{\mathring{\mathfrak{g}}}
\newcommand{\ch}{\mathring{\mathfrak{h}}}
\newcommand{\chd}{\mathring{\mathfrak{h}}^*}
\newcommand{\qi}{{q_i}}
\newcommand{\cW}{\mathring{W}}
\newcommand{\eW}{\widehat{W}}
\newcommand{\xz}{\mathsf{x}_0}
\newcommand{\al}{{\alpha}}
\newcommand{\E}{\mathsf{E}} 
\newcommand{\F}{\mathsf{F}} 
\newcommand{\qbn}[2]{{\left[\!\!\begin{array}{c} #1 \\ #2 \end{array}\!\!\right]}}
\newcommand{\sfA}{\mathsf{A}} 
\newcommand{\sfC}{\mathsf{C}}
\newcommand{\sfP}{\mathsf{P}} 
\newcommand{\sfQ}{\mathsf{Q}} 
\newcommand{\sfDel}{\mathsf{\Delta}} 
\newcommand{\sfD}{\mathsf{D}} 
\newcommand{\sfM}{\mathsf{M}} 
\newcommand{\eM}{\widehat{\sfM}} 
\newcommand{\I}{I_0}
\newcommand{\La}{\Lambda}
\newcommand{\fin}{\mathsf{fin}}
\newcommand{\spin}[2]{
\begin{tikzpicture}[x=1.2em,y=1.2em,baseline=(current bounding box.center)]
    \coordinate (base) at (0,0);
    \draw (0,2) -- (0.5,2);
    \draw (0.5,0.5) -- (0.5,2);
    \draw (0,0.5) -- (0,2);
    \node at (0.25,1.5) {\scriptsize #1};

    \draw (0,1) -- (0.5,1);
    \draw (0.5,0) -- (0.5,1);
    \draw (0,0) -- (0,1);
    \draw (0,0) -- (0.5,0);
    \node at (0.25,0.5) {\scriptsize #2};
  \end{tikzpicture}%
}
\newcommand{\spinii}[3]{
\begin{tikzpicture}[x=1.2em,y=1.2em,baseline=(current bounding box.center)]
    \draw (0,3) -- (0.5,3);
    \draw (0.5,2) -- (0.5,3);
    \draw (0,2) -- (0,3);
    \node at (0.25,2.5) {\scriptsize #1};

    \draw (0,2) -- (0.5,2);
    \draw (0.5,1) -- (0.5,2);
    \draw (0,1) -- (0,2);
    \node at (0.25,1.5) {\scriptsize #2};

    \draw (0,1) -- (0.5,1);
    \draw (0.5,0) -- (0.5,1);
    \draw (0,0) -- (0,1);
    \draw (0,0) -- (0.5,0);
    \node at (0.25,0.5) {\scriptsize #3};
\end{tikzpicture}%
}
\newcommand{\rootesix}[6]{\substack{\tiny #6 \\ #1 #2 #3 #4 #5}}
\newcommand{\rootf}[4]{\substack{\tiny #1 #2 #3 #4}}
\newcommand*{\shifttext}[2]{%
  \settowidth{\@tempdima}{#2}%
  \makebox[\@tempdima]{\hspace*{#1}#2}%
}
\title[Minuscule prefundamental representations]
{Unipotent quantum coordinate ring and minuscule prefundamental representations: twisted case}
\author{IL-SEUNG JANG}
\address{Department of Mathematics, Incheon National University, Incheon 22012, Republic of Korea}
\email{ilseungjang@inu.ac.kr}
\keywords{quantum affine algebras, unipotent quantum coordinate rings, prefundamental modules}
\subjclass[2010]{17B37, 17B67, 17B10, 05E10}
\begin{document}
\begin{abstract}
We study the prefundamental modules $L_{s,a}^{\pm}$ over the Borel subalgebras of the twisted quantum loop algebras, which are introduced by Wang. A character formula for $L_{s,a}^{\pm}$ is obtained from that for the prefundamental modules over the untwisted quantum loop algebras by applying a character folding map. This allows us to realize minuscule prefundamental modules $L_{s,a}^{\pm}$ for types $A_{2n-1}^{(2)}$ and $D_{n+1}^{(2)}$ in terms of the unipotent quantum coordinate ring associated with the $s$-th level $0$ fundamental weight, where $s = 1$ for type $A_{2n-1}^{(2)}$ and $s = n$ for type $D_{n+1}^{(2)}$. This result is a continuation of the realization of (co)minuscule prefundamental modules established by earlier works \cite{JKP23, JKP25}.
\end{abstract}

\maketitle
\setcounter{tocdepth}{1}
\tableofcontents

\section{Introduction}

Let $\g$ be a finite-dimensional simple Lie algebra over $\C$ with an index set $I_\fin$ of simple roots. Let $U_q(\ms{L}\g)$ (resp.~$U_q(\ms{L}\g^\sigma)$) be the (resp.~twisted) quantum loop algebra associated to $\g$, where $\sigma$ is an automorphism of the Dynkin diagram of $\g$. 
Then the Borel subalgebra $U_q(\bo)$ (resp.~$U_q(\bo^\sigma)$) is the $\bk$-subalgebra of $U_q(\ms{L}\g)$ (resp.~$U_q(\ms{L}\g^\sigma)$) generated by the Chevalley generators $e_i$ and $k_i^{\pm 1}$ for $i \in I$, where $I = I_\fin \sqcup \{ 0 \}$ (resp.~$I = I_\fin^\sigma \sqcup \{ 0 \}$). Here $I_\fin^\sigma$ is the set of orbits of $\sigma$ and we denote by $\ov{i}$ the orbit of $i \in I_\fin$ (cf.~Remark \ref{rem: convention-1; notation for Borel}).

In \cite{HJ}, Hernandez and Jimbo introduced the prefundamental modules over $U_q(\bo)$, denoted by $L_{s,a}^\pm$ ($s \in I_\fin$ and $a \in \C(q)^\times$), in a category $\mc{O}$ of $U_q(\bo)$-modules containing the category of finite-dimensional $U_q(\ms{L}\g)$-modules (of type $1$), motivated by the existence of a limit of normalized $q$-characters of Kirillov--Reshetikhin modules \cite{Na03a, Her06}. Note that $L_{s,a}^\pm$ are infinite-dimensional irreducible $U_q(\bo)$-modules with the highest $\ell$-weight $(\Psi_i(z))_{i \in I_\fin}$ given by
\begin{equation*} 
	\Psi_i(z) = 
	\begin{cases}
		(1-az)^{\pm 1} & \text{if $i = s$,} \\
		1 & \text{otherwise.}
	\end{cases}
\end{equation*}
It turns out that the category $\mc{O}$ and the prefundamental modules $L_{s,a}^\pm$ play a crucial role in generalizing Baxter’s relations \cite{FH15}, and in studying the Bethe Ansatz equations \cite{FJMM} and the $Q\widetilde{Q}$-systems \cite{FH18}.

Since it follows from \cite{H10a} that there exists a limit of normalized (twisted) $q$-characters of Kirillov--Reshetikhin modules over $U_q(\ms{L}\g^\sigma)$, one may expect analogues of $L_{s,a}^\pm$ over $U_q(\bo^\sigma)$ following \cite{HJ}. In \cite{Wan23}, Wang introduced such $U_q(\bo^\sigma)$-modules, namely the prefundamental modules $L_{s,a}^\pm$ over $U_q(\bo^\sigma)$ in a category $\mc{O}^\sigma$ of $U_q(\bo^\sigma)$-modules (see Definition \ref{df:prefundamental}), to prove the $Q\widetilde{Q}$-system for twisted quantum affine algebras conjectured by Frenkel and Hernandez \cite{FH18}.

In this paper, we realize the prefundamental modules $L_{s,a}^\pm$ over $U_q(\bo^\sigma)$ for types $A_{2n-1}^{(2)}$ and $D_{n+1}^{(2)}$ in terms of the unipotent quantum coordinate ring associated to the $s$-th level $0$ fundamental weight, where $s = 1$ for type $A_{2n-1}^{(2)}$ and $s = n$ for type $D_{n+1}^{(2)}$, as a continuation of the realization of (co)minuscule prefundamental modules established by earlier works \cite{JKP23, JKP25}. 

Let us explain the motivation of this work as follows.
Let $\lambda_s$ be a weight in the dual Cartan subalgebra $\h^\ast$ corresponding to the $s$-th fundamental (co)weight of the underlying simple Lie algebra $\mr{\g}$ corresponding to $I_\fin$ or $I_\fin^\sigma$ (see \eqref{eq: M-hat and la-s}). 
Take $w_s \in W$ such that $t_{-\lambda_s} = w_s \tau_s \in \widehat{W}$ for an affine Dynkin diagram automorphism $\tau_s$, where $W$ (resp.~$\widehat{W}$) is the (resp.~extended) affine Weyl group and $t_{-\lambda_s}$ is the translation by $-\lambda_s$.
Let $U_q^-(w_s)$ denote the unipotent quantum coordinate ring generated by the root vectors associated to a reduced expression of $w_s$.

For untwisted types, the ordinary character of $L_{s,a}^\pm$ is given by 
\begin{equation} \label{eq: char of Lsa - intro}
    {\rm ch}(L_{s,a}^\pm) = \prod_{\beta \in \mr{\sfDel}^+} \left( \frac{1}{1-e^{-{\beta}}} \right)^{[\beta]_s},
\end{equation}
where $\mr{\sfDel}^+$ is the set of positive roots of $\g_\fin$ and $[\beta]_s$ is the coefficient of the simple root $\alpha_s$ in $\beta$. 
This character formula was conjectured in \cite{MY14} and proved for types $A_n^{(1)}$, $B_n^{(1)}$, $C_n^{(1)}$ in \cite{Nao13} (cf.~\cite{Nao14} for type $D_n^{(1)}$) and for type $G_2^{(1)}$ in \cite{LiNa16}, for all untwisted types except for some cases in type $E_8^{(1)}$ in \cite{LeeCH}, and for all untwisted types in \cite{Neg26} (cf.~Remark \ref{rem:character formula}).
In \cite{JKP25}, we verify
\begin{equation*}
    {\rm ch}(U_q^-(w_s)) = \prod_{\beta \in \mr{\sfDel}^+} \left( \frac{1}{1-e^{-{\beta}}} \right)^{[\beta]_s},
\end{equation*}
which was one of the motivations for realizing $L_{s,a}^\pm$ in terms of $U_q^-(w_s)$.

In \cite{H10a}, Hernandez established an isomorphism of Grothendieck rings of finite-dimensional $U_q(\ms{L}\g)$- and $U_q(\ms{L}\g^\sigma)$-modules (of type $1$) in terms of (twisted) $q$-characters, which is induced from a folding map.
Since the (twisted) $q$-character is compatible with the ordinary character, the isomorphism also gives a homomorphism, denoted by $\pi$, of Grothendieck rings of finite-dimensional $U_q(\ms{L}\g)$- and $U_q(\ms{L}\g^\sigma)$-modules (of type $1$), where the map $\pi$ is obtained from a folding map such that $e^{-\alpha_i} \mapsto e^{-\alpha_{\ov{i}}}$. 
In particular, the $q$-characters (resp.~characters) of Kirillov-Reshetikhin modules over $U_q(\ms{L}\g)$ correspond to those over $U_q(\ms{L}\g^\sigma)$ under the isomorphism (resp.~$\pi$).

From the viewpoint of \cite{HJ}, it may be natural to ask whether this framework can be extended to the categories $\mc{O}$ and $\mc{O}^\sigma$ sending the ($q$-)characters of prefundamental modules over $U_q(\bo)$ to those over $U_q(\bo^\sigma)$. This is done by Wang \cite{Wan23}, where we also denote by $\pi$ the homomorphism of Grothendieck rings of the categories $\mc{O}$ and $\mc{O}^\sigma$.

By applying the map $\pi$ to \eqref{eq: char of Lsa - intro}, we prove 
\begin{equation} \label{eq: char of Lsa ; twisted - intro}
    {\rm ch}(L_{\ov{s},a}^\pm) = \prod_{\beta \in \ov{\sfDel_+(t_{-\lambda_{\ov{s}}})}} \left( \frac{1}{1-e^{-\beta}} \right)^{\xi_{\ov{s}}(\beta)\,[\beta]_{\ov{s}}},
\end{equation}
where $\xi_{\ov{s}}(\beta)$ is given in \eqref{eq: def of xis}, $\sfDel_+(t_{-\lambda_{\ov{s}}})$ is the set of positive roots associated to $t_{-\lambda_{\ov{s}}}$, and $\ov{(\,\cdot\,)}$ denotes a projection from $\h^\ast$ onto $\ch^\ast$. Interestingly, it follows from the well-known properties of $t_{-\lambda_{\ov{s}}}$ that we also have
\begin{equation} \label{eq: char of Uqws ; twisted - intro}
    {\rm ch} ( U_q^-(w_{\ov{s}}) ) = \prod_{\beta \in \ov{\sfDel_+(t_{-\lambda_{\ov{s}}})}} \left( \frac{1}{1-e^{-\beta}} \right)^{\xi_{\ov{s}}(\beta)\,[\beta]_{\ov{s}}}.
\end{equation}
By \eqref{eq: char of Lsa ; twisted - intro} and \eqref{eq: char of Uqws ; twisted - intro}, this allows us to realize $L_{\ov{s},a}^\pm$ in terms of $U_q^-(w_{\ov{s}})$ following \cite{JKP23, JKP25}. 
More precisely, we assume that $s = 1$ for type $A_{2n-1}^{(2)}$, and $s = n$ for type $D_{n+1}^{(2)}$.
For a homogeneous element $u \in U_q^-(w_s)$, 
we define 
\begin{equation*}
    k_i(u) = 
    \begin{cases}
        q^{(\alpha_i, {\rm wt}(u))} u & \text{if $i \in I \setminus \{ 0 \}$,} \\
        q^{-(\theta, {\rm wt}(u))}u & \text{if $i = 0$,}
    \end{cases}
    \qquad
    e_i^-(u) = 
    \begin{cases}
        e_i'(u) & \text{if $i \in I \setminus \{ 0 \}$,} \\
        a F^{\rm up}(\theta) u & \text{if $i = 0$,}
    \end{cases}
\end{equation*}
where $e_i'$ is the $q$-derivation on $U_q^-(w_s)$ \eqref{eq:derivation} and $F^{\rm up}(\theta)$ is the dual root vector whose weight is $\theta = \delta - a_0 \alpha_0$.
Similarly, we also define operators $e_i^+ = e_i^-$ for $i \in \I$, while $e_0^+$ is defined by $e_0^+(u) = aq^{-(\theta, {\rm wt}(u))} u F^{\rm up}(\theta)$.
We remark that the operators $e_i^\pm$ are motivated by crystal base theory. See Section \ref{subsec:non-(co)minuscule cases} for further discussion.
\medskip

Now the main result of this paper is as follows.

\begin{thm}[Theorem \ref{thm: main1}, Theorem \ref{thm: main2}] \label{thm:main}
For types $A_{2n-1}^{(2)}$ and $D_{n+1}^{(2)}$, 
the operators $k_i$ and $e_i^-$ (resp.~$e_i^+$) define a representation of $U_q(\bo^\sigma)$ on $U_q^-(w_s)$, denoted by $\rho_-$ (resp.~$\rho_+$), respectively. Furthermore, the $U_q(\bo^\sigma)$-module $U_q^-(w_s)$ with respect to $\rho_-$ (resp.~$\rho_+$) has the same $\ell$-highest weight as that of $L_{s,a\eta_s}^-$ (resp.~$L_{s,-a\eta_s}^+$) for some $\eta_s \in \C(q)^\times$, so they are isomorphic as $U_q(\bo^\sigma)$-modules by \eqref{eq: char of Lsa ; twisted - intro} and \eqref{eq: char of Uqws ; twisted - intro}.
\end{thm}

The above construction reveals a somewhat unexpected phenomenon, namely, the same parameter $\eta_s$ appears for $L_{s,a}^-$ and for $L_{s,a}^+$. 
Note that this phenomenon holds for all (co)minuscule prefundamental modules of untwisted affine type \cite{JKP23, JKP25}. 
On the other hand, there exists a functor $\mc{F}_q$ from a category of $U_{q^{-1}}(\bo)$-modules to that of $U_q(\bo)$-modules such that $\mc{F}(L_{s,a}^{\pm,q^{-1}})$ are also prefundamental modules \cite{Pinet24} (cf.~\cite{HL16b}), where $L_{s,a}^{\pm,q^{-1}}$ are prefundamental modules over $U_{q^{-1}}(\bo)$. In fact, there exists $\gamma \in \mathbb{C}(q)^\times$ such that $\mc{F}_q(L_{s,a}^{\pm,q^{-1}}) \cong L_{s,a\gamma}^\mp$ for all $s \in I_\fin$ and $a \in \mathbb{C}(q)^\times$. 
This similar phenomenon may suggest finding a connection between Theorem \ref{thm:main}, \cite{JKP23, JKP25} and \cite{Pinet24}.
\smallskip

By combining Theorem \ref{thm:main} with \cite{JKP23, JKP25}, we summarize

\begin{enumerate}[(a)]
    \item for all affine types, $U_q^-(w_s)$ admits a module structure over the Borel subalgebra by using the $q$-derivations $e_i'$\, $(i \neq 0)$ \eqref{eq:derivation} together with the $0$-action given by left multiplication by the root vector \eqref{eq: xz} of weight $\alpha_0\!\pmod{\delta}$, where $\delta$ is the null root \eqref{eq: left action}, 
    
    \item for all affine types, we have ${\rm ch}(L_{s,a}^\pm) = {\rm ch}(U_q^-(w_s))$ (Theorem \ref{thm: characters}), 
    
    \item for all (co)minuscule nodes $s$, $U_q^-(w_s)$ is isomorphic to $L_{s,\mp a\eta_s}^\pm$ as a module over the Borel subalgebra, where the $0$-action of $U_q^-(w_s)$ in the case $U_q^-(w_s) \cong L_{s,-a\eta_s}^+$ is replaced by right multiplication by the root vector \eqref{eq: xz}, as in \eqref{eq: right action}.
\end{enumerate}

An advantage of this realization of $L_{s,a}^\pm$ is that one may use nice properties of $U_q^-(w_s)$ to analyze the structure of $L_{s,a}^\pm$. 
Let $U_q^-(w_s)^{\rm up}_{\mc{A}}$ be the $\mc{A}$-form of $U_q^-(w_s)$ generated by $F^{\rm up}({\bf c},\wtd{w_s})$ for ${\bf c}\in \Z_+^\ell$ \eqref{eq:dual PBW monomial}, and let $G^{\rm up}(\infty)$ be the dual canonical basis of $U_q^-(\g)$ (see Section \ref{subsec:DCB} for more details).
Then it is known in \cite{Ki12} that the dual canonical basis is compatible with $U_q^-(w_s)^{\rm up}_{\mc{A}}$, that is, $G^{\rm up}(\infty) \cap U_q^-(w_s)^{\rm up}_{\mc{A}}$ is an $\mc{A}$-basis of $U_q^-(w_s)^{\rm up}_{\mc{A}}$. 
Therefore the prefundamental modules $L_{s,a}^\pm$ also admit such a basis for every (co)minuscule node $s \in I_\fin$ by \cite{JKP23, JKP25} and Theorem \ref{thm:main}.
Hence it would be interesting to clarify the behavior of $G^{\rm up}(\infty)$ in $L_{s,a}^\pm$ with respect to the $U_q(\bo)$-actions, which may be helpful to understand the internal structure of $L_{s,a}^\pm$.
We plan to address this approach in future work.

For example, it is known in \cite{FH15} that there exists a certain grading on positive prefundamental modules with nice properties with respect to the action of Drinfeld generators, which plays a crucial role in \cite{FH15} to prove the main result of that paper. 
One may analyze this grading in the above approach.

Finally, it is worth noting that in \cite{Neg26}, Neguţ constructs $L_{s,a}^\pm$ for all untwisted affine types by using quantum shuffle algebra, and in \cite{Qun26}, Qunell realizes (the dual of) $L_{s,a}^\pm$ in type $A_n^{(1)}$ by a $2$-categorical approach. It may be worthwhile to compare the results in the present paper with the recent works.

The paper is organized as follows.
In Section \ref{sec:affine root systems and Weyl groups}, 
we recall some properties of $t_{-\lambda_s}$,
which play crucial roles throughout the paper.
In Section \ref{sec:quantum groups and dual canonical basis}, 
we introduce and study the (dual) canonical basis of the negative half of the quantum group associated to (affine) Kac--Moody algebras \cite{Lus90b, Kas91a}.
In Section \ref{sec:representation theory of borel algebras}, we review representation theory of the Borel subalgebras following \cite{HJ, Wan23}.
In Section \ref{sec:character formula of Lsa}, we prove that the character of $U_q^-(w_s)$ coincides with that of $L_{r,a}^\pm$ (Theorem \ref{thm: characters}).
In Section \ref{sec:minuscule prefundamental modules}, we prove that for types $A_{2n-1}^{(2)}$ and $D_{n+1}^{(2)}$, there exist $U_q(\bo^\sigma)$-module structures on $U_q^-(w_s)$ isomorphic to $L_{s,a}^\pm$ up to shift of spectral parameter $a$, where $s = 1$ for type $A_{2n-1}^{(2)}$, and $s = n$ for type $D_{n+1}^{(2)}$ (Theorem \ref{thm: main1}, Theorem \ref{thm: main2}).

\section{Affine root systems and Weyl groups} \label{sec:affine root systems and Weyl groups}

\subsection{Affine root systems} \label{subsec: affine root system}
Let $\g = \g(\sfA)$ be the affine Kac--Moody algebra associated with the generalized Cartan matrix $\sfA = (a_{ij})_{i,j \in I}$ of type $X_N^{(r)}$, where $I = \{ 0, 1, \dots, n\}$. We use the numbering of the Dynkin diagrams as in \cite{Kac}, except in the case of type $A_{2n}^{(2)}$ $(n \ge 1)$, where we reverse the numbering of the simple roots (see~Table \ref{tab:dynkins}). Then the diagonal matrix $\sfD = {\rm diag}(d_i)_{i \in I}$ is chosen such that $d_i \in \Z_{\ge 1}$ and $\sfD\sfA$ is symmetric with $\min \{ d_i \, | \, i \in I \} = 1$.
Then we assume that the following data are given.
\begin{itemize}
	\item $\sfP^\vee$ is the dual weight lattice and $\{\, \Lambda_i^\vee \in \sfP^\vee \,\mid\, i \in I \,\}$ is the set of fundamental coweights,
	\item $\Pi^\vee = \left\{\,h_i \in \sfP^\vee \,|\, i\in I\,\right\}$ is the set of simple coroots, 
	\item $\sfP = \left\{\,\lambda\in\h^*\,|\,\lambda(P^\vee) \subset \Z\,\right\}$ is the weight lattice and $\{\, \Lambda_i \in \sfP \,\mid\, i \in I \,\}$ is the set of fundamental weights,
	\item $\Pi = \left\{\,\alpha_i\in\h^*\,|\,i\in I\,\right\}$ is the set of simple roots.
\end{itemize}
Set $\h = \mathbb{C} \otimes_{\,\mathbb{Z}} \sfP^\vee$,
and $\h^*$ is the dual space of $\h$.
Let $\sfP_+$ be the set of dominant integral weights, and let $\sfDel = \sfDel_+ \cup \sfDel_-$ be the set of real roots, where $\sfDel_+$ (resp.~$\sfDel_-$) is the set of positive (resp.~negative) real roots. We denote by $\sfDel_l$ (resp.~$\sfDel_s$) the set of long (resp.~short) roots. Let $\sfQ = \oplus_{i \in I} \Z \alpha_i$ be the root lattice. Then we set $\sfQ_+ = \sum_{i \in I} \Z_+ \alpha_i$, and $\sfQ_- = -\sfQ_+$.
For $\beta = \sum_{i \in I} k_i \alpha_i \in \sfQ$, the number ${\rm ht}(\beta) = \sum_{i \in I} k_i$ is called the height of $\beta$.

Take a nondegenerate symmetric $\C$-valued bilinear form $\left(\,\cdot\,,\,\cdot\,\right)$ on $\h^*$ such that $(\alpha_i, \alpha_i) = 2d_i$ for $i \in I$. Note that
\begin{equation*} 
    a_{ij} = \langle \alpha_j, h_i \rangle = \frac{2(\alpha_i, \alpha_j)}{(\alpha_i, \alpha_i)} \quad (i, j \in I),
\end{equation*}
where $\langle \,\cdot\,,\,\cdot\, \rangle$ is the pairing between $\h$ and $\h^*$, and $\langle \Lambda_i, h_j \rangle  = \delta_{ij}$ and $\langle \alpha_i, \Lambda_j^\vee \rangle = \delta_{ij}$ for $i, j \in I$.
We denote by $\nu : \h \rightarrow \hd$ the $\C$-linear isomorphism from $\h$ to $\hd$ induced from $(\,\cdot\,,\,\cdot\,)$.

Set $\I = I \setminus \{ 0  \}$, $\mr{\Pi}^\vee = \left\{\,h_i \in \sfP^\vee \,|\, i\in \I\,\right\}$, and  $\mr{\Pi} = \left\{\,\alpha_i \in \sfP \,|\, i\in \I\,\right\}$.
Let $\ch$ be the subspace of $\h$ spanned by $\mr{\Pi}^\vee$ over $\C$, which is regarded as the Cartan subalgebra of the underlying finite-dimensional simple Lie subalgebra $\cg$ corresponding to $\mr{\sfA} = (a_{ij})_{i,j \in \I }$.
For $\lambda \in \h^{\ast}$, let $\ov{\lambda}$ denote the orthogonal projection of $\lambda$ onto $\ch^\ast \subset \h^*$ with respect to $(\,\cdot\,,\,\cdot\,)$. For $\lambda^\vee \in \h$, the dual notion $\ov{\lambda^\vee}$ is also defined similarly. 
Let $\mr{\sfQ}$ be the sublattice of $\sfQ$ generated by $\mr{\Pi}$, and let $\mr{\sfP}$ be the projection of $\sfP$ on $\ch^\ast$, where $\ov{\Lambda_i}$ $(i = 1, \dots, n)$ is identified with the $i$-th fundamental weight of $\cg$.
Note that $\ov{\La_i}$\, $(i \neq 0)$ is given by $\La_i = \ov{\La_i} + a_i^\vee \La_0$ \cite[(12.4.3)]{Kac}.
The dual notions $\ch$, $\mr{\sfQ}^\vee$, $\mr{\sfP}^\vee$ and $\ov{\La_i^\vee}$ are defined similarly.
We denote by $\mr{\sfDel}$ the set of roots for $\cg$, and $\mr{\sfDel}_+$ (resp.~$\mr{\sfDel}_-$) the set of positive (resp.~negative) roots for $\cg$. 
Also we denote by $\mr{\sfDel}_s$ (resp.~$\mr{\sfDel}_l$) the set of short (resp.~long) roots of $\cg$.

Let $a_0, a_1, \cdots, a_n$ be the Kac labels, where $a_0 = 1$ for all cases. We denote by $a_i^\vee$ $(i \in I)$ the dual Kac label with respect to $\sfA^t$, where $a_0^\vee = 1$ unless $\sfA$ is of type $A_{2n}^{(2)}$, in which case $a_0^\vee = 2$ (see Table \ref{tab:dynkins}, cf.~\cite[TABLE Aff, pages 54--55]{Kac}).
Let $\delta \in \h^*$ be an imaginary null root and $K \in \h$ a central element given by
\begin{equation*} 
	\delta = \sum_{i \in I} \,a_i \alpha_i \text{\,\, and \,\,}
	K = \mathsf{t}\,\sum_{i \in I} \,a_i^\vee h_i,
\end{equation*}
where $\mathsf{t} \in \Q$ is chosen to be $\nu(K) = \delta$. 
Note that the factor $\mathsf{t}$ is due to our choice of $\sfD$ (see \cite[Remark 3.2]{JKP25} for more detail). 

Put $\theta = \delta - a_0 \alpha_0$. 
It is known in \cite[Proposition 6.4]{Kac} that $\theta$ is the unique long root in $\mr{\sfDel}$ of maximal height for untwisted affine types, whereas it is the unique short root in $\mr{\sfDel}$ of maximal height for other twisted affine types except for  $A_{2n}^{(2)}$. Note that $\theta = 2(\alpha_1 + \dots + \alpha_n)$ for type $A_{2n}^{(2)}$ under our choice of the numbering of the simple roots, where $\alpha_1 + \dots + \alpha_n$ is a short root and $\theta + \delta$ is a long root.

\subsection{Affine Weyl groups}
Let $W$ be the Weyl group of $\g$, which is defined by the subgroup of ${\rm GL}(\h^*)$ generated by the simple reflections $s_i$ given by $s_i(\lambda) = \lambda - \langle \lambda,\, h_i \rangle\, \alpha_i$ for $i \in I$ and $\lambda \in \h^*$.
Note that the bilinear form $\left(\,\cdot\,,\,\cdot\,\right)$ is $W$-invariant.
Let $R(w)=\{\,(i_1,\dots,i_\ell) \,\,|\,\, i_j \in I \,\,\, \text{and} \,\, w=s_{i_1}\dots s_{i_\ell}\,\}$ be the set of reduced expressions of $w$, where $\ell$ is the length of $w$, denoted by $\ell(w)$.
Denote by $\cW$ the subgroup of $W$ generated by $s_1, \dots, s_n$.

For $\beta \in \hd$, the endomorphism $t_\beta$ of $\hd$ is defined by
\begin{equation} \label{eq:translation}
	t_\beta (\lambda)
	=
	\lambda + \langle \lambda,\, K \rangle \,\beta - \left( (\lambda, \beta) + \frac{1}{2}(\beta,\beta) \langle \lambda, K \rangle \right) \delta,
\end{equation}
which is called the translation by $\beta$ \cite[(6.5.2)]{Kac}.
Note that the translations satisfy the additive property, that is, $t_\alpha t_\beta = t_{\alpha + \beta}$ for $\alpha, \beta \in \h^\ast$, and $t_{w(\beta)} = w t_\beta w^{-1}$ for $w \in \cW$.

If $\g$ is of type $X_N^{(r)}$ with $r > 1$ except for $A_{2n}^{(2)}$, then for $\beta \in \sfDel^{\rm re}$, we set $\gamma_\beta = r$ if $\beta \in \sfDel_l$, and $\gamma_\beta = 1$ otherwise.
If $\g$ is of type $X_n^{(1)}$ or $A_{2n}^{(2)}$, we set $\gamma_\beta = 1$ for any $\beta \in \sfDel^{\rm re}$.
We define
\begin{equation*} 
	\sfM = \nu\left( \Z(\cW \cdot \theta^\vee) \right)
	\, \text{ and } \,\,\,
	\eM = \left\{ \left. \, \lambda \in \chd \, \right| \, (\lambda, \beta) \in \gamma_\beta \mathbb{Z} \,\, \text{\,for } \beta \in \sfDel^{\rm re}  \, \right\}
\end{equation*}
to be the $\Z$-lattices of $\mr{\h}^\ast$.
Let $T_\sfM$ (resp.~$T_{\eM}$) be the group of translations of $\sfM$ (resp.~$\eM$). Then it is well-known that $W = \cW \ltimes T_\sfM$, and then we define $\eW = \cW \ltimes T_{\eM}$, called the extended affine Weyl group of $\g$, so that $W \subset \eW$.
We remark that $\eM$ is defined such that $\eW$ acts on $\sfDel$.
The description of $\eM$ and its natural basis, denoted by $\{ \lambda_i \, \mid \, i \in \I \}$, are given as follows:
\begin{equation} \label{eq: M-hat and la-s}
\begin{split}
    \eM &= \begin{cases}
        \nu\left( \mr{\sfP}^\vee \right) & \text{if $r = 1$ or $\g$ is of type $A_{2n}^{(2)}$,} \\
        \mr{\sfP} & \text{otherwise,}
    \end{cases}
    \\
    \lambda_i &= \begin{cases}
        \nu\left(\,\ov{\La_i^\vee}\,\right) & \text{if $r = 1$ or $\g$ is of type $A_{2n}^{(2)}$,} \\
        \ov{\La_i} & \text{otherwise,}
    \end{cases}
\end{split}
\end{equation}
where $\mr{\sfP}$ (resp.~$\mr{\sfP}^\vee$) is the (resp.~dual) weight lattice of $\cg$.

\begin{rem}
{\em 
For $\lambda \in \sfP$, we call $\lambda$ a level $0$ weight if $\langle \lambda, K \rangle = 0$. One can check that $\ov{\La_i}$ is a level $0$ weight when $\sfA$ is not of type $A_{2n}^{(2)}$, whereas $\La_i - \La_0 = \ov{\La_i} + \La_0$\, $(i \neq n)$ and $2\La_n - \La_0 = 2\ov{\La_n} + \La_0$ are level $0$ weights when $\sfA$ is of type $A_{2n}^{(2)}$.
}
\end{rem}

Let $\mr{\h}_{\mathbb{R}}^*$ be the $\mathbb{R}$-linear span of $\alpha_1, \dots, \alpha_n$, and let
\begin{equation*}
	C_{\rm af} = \left\{ \left. \, \lambda \in \mr{\h}_{\mathbb{R}}^* \,\, \right| \, (\lambda, \alpha_i) \ge 0 \text{ for $i \in \I$, and } (\lambda, \theta) \le 1 \,\right\},
\end{equation*}
which is called the affine Weyl chamber (also called the fundamental alcove). Let $\mc{T}$ be the subgroup of $\eW$, which stabilizes $C_{\rm af}$. Then it is well-known (e.g.~see \cite{Bou02}) that the subgroup $\mc{T}$ is isomorphic to $\eW /W \simeq T_{\eM} / T_{\sfM}$, and $\eW$ is isomorphic to $W \rtimes \mc{T}$. 
Here each element of $\mc{T}$ is understood as an affine Dynkin diagram automorphism. More explicitly, since the map $\mu \mapsto t_\mu$ identifies
$T_{\eM}$ and $T_{\sfM}$ with the additive groups $\eM$ and $\sfM$,
respectively, we have $T_{\eM}/T_{\sfM} \simeq \eM/\sfM$.
This finite abelian group is given by
\[
T_{\eM}/T_{\sfM}\simeq
\begin{cases}
\Z/(n+1)\Z
    & \text{if $\sfA$ is of type $A_n^{(1)}$,} \\[1mm]
\Z/2\Z
    & \text{if $\sfA$ is of type $B_n^{(1)},\, C_n^{(1)},\, E_7^{(1)},\,
       A_{2n-1}^{(2)},\, D_{n+1}^{(2)}$,} \\[1mm]
\Z/4\Z
    & \text{if $\sfA$ is of type $D_n^{(1)}$ with $n$ odd,} \\[1mm]
\Z/2\Z \times \Z/2\Z
    & \text{if $\sfA$ is of type $D_n^{(1)}$ with $n$ even,} \\[1mm]
\Z/3\Z
    & \text{if $\sfA$ is of type $E_6^{(1)}$,} \\[1mm]
\{0\}
    & \text{if $\sfA$ is of type $E_8^{(1)},\, F_4^{(1)},\, G_2^{(1)},\, A_{2n}^{(2)},\, E_6^{(2)},\, D_4^{(3)}$.}
\end{cases}
\]
Here $\{0\}$ denotes the trivial group.

\begin{ex}
{\em 
For types $A_3^{(1)}$, $C_2^{(1)}$, $B_3^{(1)}$, $E_6^{(1)}$, $D_4^{(1)}$, $D_5^{(1)}$, $A_5^{(2)}$, and $D_3^{(2)}$, the group $T_{\eM}/T_{\sfM}$ is given as follows.
  \[
    \setlength{\arraycolsep}{0pt}
    \renewcommand{\arraystretch}{1.4}
    \!\!\!\!\!
    \begin{array}{cccc}

    \begin{array}{c}
    {\setlength{\unitlength}{0.09in}\begin{picture}(16,10)(0,-1)
    \put(3,2.1){\makebox(0,0)[c]{$\circ$}}
    \put(8,2.1){\makebox(0,0)[c]{$\circ$}}
    \put(13,2.1){\makebox(0,0)[c]{$\circ$}}
    \put(8,4.5){\makebox(0,0)[c]{$\circ$}}   
    \put(3.3,2.1){\line(1,0){4.4}}
    \put(8.3,2.1){\line(1,0){4.4}}
    \put(7.7,4.4){\line(-2,-1){4.4}}
    \put(8.3,4.4){\line(2,-1){4.4}}
    \put(8,3.5){\makebox(0,0)[c]{\tiny $\alpha_0$}}
    \put(3,1.3){\makebox(0,0)[c]{\tiny $\alpha_1$}}
    \put(8,1.3){\makebox(0,0)[c]{\tiny $\alpha_2$}}
    \put(13,1.3){\makebox(0,0)[c]{\tiny $\alpha_3$}}
    {\color{red}
    \put(3.50,2.34){\makebox(0,0)[c]{\rotatebox{206}{$\succ$}}}  
    \put(7.45,2.10){\makebox(0,0)[c]{$\succ$}}                   
    \put(12.45,2.10){\makebox(0,0)[c]{$\succ$}}                  
    \put(8.50,4.26){\makebox(0,0)[c]{\rotatebox{154}{$\succ$}}}  
    }
    \end{picture}} \\
    A_3^{(1)},\,\,\Z/4\Z
    \end{array}
    &
    \begin{array}{c}
    {\setlength{\unitlength}{0.09in}\begin{picture}(13,10)(0,-1)
    \put(3,2.1){\makebox(0,0)[c]{$\circ$}}
    \put(6.5,2.1){\makebox(0,0)[c]{$\circ$}}
    \put(10,2.1){\makebox(0,0)[c]{$\circ$}}
    \put(3.3,2){\line(1,0){2.9}}\put(3.3,2.2){\line(1,0){2.9}}
    \put(6.8,2){\line(1,0){2.9}}\put(6.8,2.2){\line(1,0){2.9}}
    \put(4,1.7){\Large $\succ$}
    \put(7.7,1.7){\Large $\prec$}
    \put(3,1.3){\makebox(0,0)[c]{\tiny $\alpha_0$}}
    \put(6.5,1.3){\makebox(0,0)[c]{\tiny $\alpha_1$}}
    \put(10,1.3){\makebox(0,0)[c]{\tiny $\alpha_2$}}
    {\color{red}
    \qbezier(3.42,2.45)(6.5,5)(9.58,2.45)
    \put(3.42,2.45){\makebox(0,0)[c]{\rotatebox{220}{$\succ$}}}
    \put(9.58,2.45){\makebox(0,0)[c]{\rotatebox{-40}{$\succ$}}}
    }
    \end{picture}} \\
    C_2^{(1)},\,\,\Z/2\Z
    \end{array}
    &
    \begin{array}{c}
    {\setlength{\unitlength}{0.09in}\begin{picture}(14,10)(-1,-1)
    \put(3,0.7){\makebox(0,0)[c]{$\circ$}}    
    \put(3,3.3){\makebox(0,0)[c]{$\circ$}}    
    \put(6.5,2){\makebox(0,0)[c]{$\circ$}}    
    \put(10,2){\makebox(0,0)[c]{$\circ$}}     
    \put(6.2,1.7){\line(-3,-1){2.8}}
    \put(6.2,2.3){\line(-3,1){2.8}}
    \put(6.8,1.9){\line(1,0){2.9}}\put(6.8,2.1){\line(1,0){2.9}}
    \put(7.5,1.6){\Large $\succ$}
    \put(3,2.5){\makebox(0,0)[c]{\tiny $\alpha_0$}}
    \put(3,0){\makebox(0,0)[c]{\tiny $\alpha_1$}}
    \put(6.5,1){\makebox(0,0)[c]{\tiny $\alpha_2$}}
    \put(10,1){\makebox(0,0)[c]{\tiny $\alpha_3$}}
    {\color{red}
    \qbezier(2.51,3.05)(0.5,2)(2.51,0.95)
    \put(2.51,3.05){\makebox(0,0)[c]{\rotatebox{28}{$\succ$}}}
    \put(2.51,0.95){\makebox(0,0)[c]{\rotatebox{-28}{$\succ$}}}
    }
    \end{picture}} \\
    B_3^{(1)},\,\, \Z/2\Z
    \end{array}
    &
    \begin{array}{c}
    {\setlength{\unitlength}{0.09in}\begin{picture}(16,10)(0,-1)
    \put(2,2){\makebox(0,0)[c]{$\circ$}}      
    \put(5,2){\makebox(0,0)[c]{$\circ$}}      
    \put(8,2){\makebox(0,0)[c]{$\circ$}}      
    \put(11,2){\makebox(0,0)[c]{$\circ$}}     
    \put(14,2){\makebox(0,0)[c]{$\circ$}}     
    \put(8,5){\makebox(0,0)[c]{$\circ$}}      
    \put(8,8){\makebox(0,0)[c]{$\circ$}}      
    \put(2.3,2){\line(1,0){2.4}}
    \put(5.3,2){\line(1,0){2.4}}
    \put(8.3,2){\line(1,0){2.4}}
    \put(11.3,2){\line(1,0){2.4}}
    \put(8,2.3){\line(0,1){2.4}}
    \put(8,5.3){\line(0,1){2.4}}
    \put(2,1.2){\makebox(0,0)[c]{\tiny $\alpha_1$}}
    \put(5,1.2){\makebox(0,0)[c]{\tiny $\alpha_3$}}
    \put(8,1.2){\makebox(0,0)[c]{\tiny $\alpha_4$}}
    \put(11,1.2){\makebox(0,0)[c]{\tiny $\alpha_5$}}
    \put(14,1.2){\makebox(0,0)[c]{\tiny $\alpha_6$}}
    \put(8.8,5){\makebox(0,0)[c]{\tiny $\alpha_2$}}
    \put(8.8,8){\makebox(0,0)[c]{\tiny $\alpha_0$}}
    {\color{red}
    \qbezier(7.48,7.84)(1.77,6.07)(1.97,2.55)
    \put(1.97,2.55){\makebox(0,0)[c]{\rotatebox{273}{$\succ$}}}
    \qbezier(2.43,1.66)(8,-2.80)(13.57,1.66)
    \put(13.57,1.66){\makebox(0,0)[c]{\rotatebox{39}{$\succ$}}}
    \qbezier(14.03,2.55)(14.23,6.07)(8.53,7.84)
    \put(8.53,7.84){\makebox(0,0)[c]{\rotatebox{163}{$\succ$}}}
    \qbezier(7.48,4.84)(4.89,4.04)(4.97,2.55)
    \put(4.97,2.55){\makebox(0,0)[c]{\rotatebox{273}{$\succ$}}}
    \qbezier(5.39,1.61)(8,-1)(10.61,1.61)
    \put(10.61,1.61){\makebox(0,0)[c]{\rotatebox{45}{$\succ$}}}
    \qbezier(11.03,2.55)(11.11,4.04)(8.53,4.84)
    \put(8.53,4.84){\makebox(0,0)[c]{\rotatebox{163}{$\succ$}}}
    }
    \end{picture}} \\
    E_6^{(1)},\,\,\Z/3\Z
    \end{array}
    \\[3em]


    \begin{array}{c}
    {\setlength{\unitlength}{0.09in}\begin{picture}(15,8)(-1,-1.5)
    \put(3,0.7){\makebox(0,0)[c]{$\circ$}}    
    \put(3,3.3){\makebox(0,0)[c]{$\circ$}}    
    \put(7,2){\makebox(0,0)[c]{$\circ$}}      
    \put(11,3.3){\makebox(0,0)[c]{$\circ$}}   
    \put(11,0.7){\makebox(0,0)[c]{$\circ$}}   
    \put(6.7,1.7){\line(-3,-1){3.4}}
    \put(6.7,2.3){\line(-3,1){3.4}}
    \put(7.3,2.3){\line(3,1){3.4}}
    \put(7.3,1.7){\line(3,-1){3.4}}
    \put(3,2.5){\makebox(0,0)[c]{\tiny $\alpha_0$}}
    \put(3,0){\makebox(0,0)[c]{\tiny $\alpha_1$}}
    \put(7,1){\makebox(0,0)[c]{\tiny $\alpha_2$}}
    \put(11,2.5){\makebox(0,0)[c]{\tiny $\alpha_3$}}
    \put(11,0){\makebox(0,0)[c]{\tiny $\alpha_4$}}
    {\color{red}
    \qbezier(2.557,3.070)(0.5,2)(2.557,0.930)
    \put(2.557,3.070){\makebox(0,0)[c]{\rotatebox[origin=c]{27}{$\succ$}}}
    \put(2.557,0.930){\makebox(0,0)[c]{\rotatebox[origin=c]{-27}{$\succ$}}}
    \qbezier(11.443,3.070)(13.5,2)(11.443,0.930)
    \put(11.443,3.070){\makebox(0,0)[c]{\rotatebox[origin=c]{153}{$\succ$}}}
    \put(11.443,0.930){\makebox(0,0)[c]{\rotatebox[origin=c]{207}{$\succ$}}}
    }
    {\color{blue}
    \qbezier(3.047,3.524)(7,5.3)(10.153,3.424)
    \put(3.047,3.524){\makebox(0,0)[c]{\rotatebox[origin=c]{207}{$\succ$}}}
    \put(10.153,3.424){\makebox(0,0)[c]{\rotatebox[origin=c]{-27}{$\succ$}}}
    \qbezier(3.047,0.476)(7,-1.3)(10.153,0.476)
    \put(3.047,0.476){\makebox(0,0)[c]{\rotatebox[origin=c]{153}{$\succ$}}}
    \put(10.153,0.476){\makebox(0,0)[c]{\rotatebox[origin=c]{27}{$\succ$}}}
    }
    \end{picture}} \\
    D_4^{(1)},\,\,\Z/2\Z \times \Z/2\Z
    \end{array}
    &
    \begin{array}{c}
    {\setlength{\unitlength}{0.09in}\begin{picture}(15,8)(0,-1.5)
    \put(3,0.7){\makebox(0,0)[c]{$\circ$}}    
    \put(3,3.3){\makebox(0,0)[c]{$\circ$}}    
    \put(6,2){\makebox(0,0)[c]{$\circ$}}      
    \put(9,2){\makebox(0,0)[c]{$\circ$}}      
    \put(12,3.3){\makebox(0,0)[c]{$\circ$}}   
    \put(12,0.7){\makebox(0,0)[c]{$\circ$}}   
    \put(5.7,1.7){\line(-2,-1){2.3}}
    \put(5.7,2.3){\line(-2,1){2.3}}
    \put(6.3,2){\line(1,0){2.4}}
    \put(9.3,2.3){\line(2,1){2.3}}
    \put(9.3,1.7){\line(2,-1){2.3}}
    \put(3,2.5){\makebox(0,0)[c]{\tiny $\alpha_0$}}
    \put(3,0){\makebox(0,0)[c]{\tiny $\alpha_1$}}
    \put(6,1){\makebox(0,0)[c]{\tiny $\alpha_2$}}
    \put(9,1){\makebox(0,0)[c]{\tiny $\alpha_3$}}
    \put(12,2.5){\makebox(0,0)[c]{\tiny $\alpha_4$}}
    \put(12,0){\makebox(0,0)[c]{\tiny $\alpha_5$}}
    {\color{red}
    \qbezier(3.50,3.52)(7.5,5.3)(11.50,3.52)                  
    \put(11.50,3.52){\makebox(0,0)[c]{\rotatebox{-24}{$\succ$}}}
    \qbezier(11.72,3.15)(7.5,4.88)(3.40,1.07)                 
    \put(3.40,1.07){\makebox(0,0)[c]{\rotatebox{223}{$\succ$}}}
    \qbezier(3.50,0.48)(7.5,-1.3)(11.50,0.48)                 
    \put(11.50,0.48){\makebox(0,0)[c]{\rotatebox{24}{$\succ$}}}
    \qbezier(11.72,0.98)(7.5,-0.88)(3.40,2.93)                
    \put(3.40,2.93){\makebox(0,0)[c]{\rotatebox{137}{$\succ$}}}
    \qbezier(6.42,2.36)(7.5,3.3)(8.58,2.36)
    \put(6.42,2.36){\makebox(0,0)[c]{\rotatebox{221}{$\succ$}}}
    \put(8.58,2.36){\makebox(0,0)[c]{\rotatebox{-41}{$\succ$}}}
    }
    \end{picture}} \\
    D_5^{(1)},\,\, \Z/4\Z
    \end{array}
    &
    \begin{array}{c}
    {\setlength{\unitlength}{0.09in}\begin{picture}(11,8)(-1,-1.5)
    \put(3,0.7){\makebox(0,0)[c]{$\circ$}}
    \put(3,3.3){\makebox(0,0)[c]{$\circ$}}
    \put(6.5,2){\makebox(0,0)[c]{$\circ$}}
    \put(10,2){\makebox(0,0)[c]{$\circ$}}
    \put(6.2,1.7){\line(-3,-1){2.8}}
    \put(6.2,2.3){\line(-3,1){2.8}}
    \put(6.8,1.9){\line(1,0){2.9}}\put(6.8,2.1){\line(1,0){2.9}}
    \put(7.5,1.55){\Large $\prec$}            
    \put(3,2.5){\makebox(0,0)[c]{\tiny $\alpha_0$}}
    \put(3,0){\makebox(0,0)[c]{\tiny $\alpha_1$}}
    \put(6.5,1){\makebox(0,0)[c]{\tiny $\alpha_2$}}
    \put(10,1){\makebox(0,0)[c]{\tiny $\alpha_3$}}
    {\color{red}
    \qbezier(2.51,3.05)(0.5,2)(2.51,0.95)
    \put(2.51,3.05){\makebox(0,0)[c]{\rotatebox{28}{$\succ$}}}
    \put(2.51,0.95){\makebox(0,0)[c]{\rotatebox{-28}{$\succ$}}}
    }
    \end{picture}} \\
    A_5^{(2)},\,\, \Z/2\Z
    \end{array}
    &
    \begin{array}{c}
    {\setlength{\unitlength}{0.09in}\begin{picture}(13,8)(0,-1.5)
    \put(3,2.1){\makebox(0,0)[c]{$\circ$}}
    \put(6.5,2.1){\makebox(0,0)[c]{$\circ$}}
    \put(10,2.1){\makebox(0,0)[c]{$\circ$}}
    \put(3.3,2){\line(1,0){2.9}}\put(3.3,2.2){\line(1,0){2.9}}
    \put(6.8,2){\line(1,0){2.9}}\put(6.8,2.2){\line(1,0){2.9}}
    \put(4,1.7){\Large $\prec$}               
    \put(7.7,1.7){\Large $\succ$}
    \put(3,1.3){\makebox(0,0)[c]{\tiny $\alpha_0$}}
    \put(6.5,1.3){\makebox(0,0)[c]{\tiny $\alpha_1$}}
    \put(10,1.3){\makebox(0,0)[c]{\tiny $\alpha_2$}}
    {\color{red}
    \qbezier(3.42,2.45)(6.5,5)(9.58,2.45)
    \put(3.42,2.45){\makebox(0,0)[c]{\rotatebox{220}{$\succ$}}}
    \put(9.58,2.45){\makebox(0,0)[c]{\rotatebox{-40}{$\succ$}}}
    }
    \end{picture}} \\
    D_3^{(2)},\,\, \Z/2\Z
    \end{array}
    \end{array}
  \]
where the blue and red arrows in the affine Dynkin diagram for type $D_4^{(1)}$ denote the generators of $\Z/2\Z \times \Z/2\Z$.
}
\end{ex}

For $\hat{w} \in \eW$, one can write
\begin{equation*}
	\hat{w} = w \tau,
\end{equation*}
where $w \in W$ and $\tau \in \mc{T}$. The length of $\hat{w} \in \eW$ is defined by $\ell(w)$, which is also denoted by $\ell(\hat{w})$.
Note that $\mc{T} = \{ \, \hat{w} \in \eW \, | \, \ell(\hat{w}) = 0 \, \}$.
We define
\begin{equation*}
    \sfDel_+(\hat{w})=\sfDel_+\cap \left(\hat{w}\sfDel_-\right) \subset \sfDel_+.
\end{equation*}
Note that $\ell(\hat{w})  = |\sfDel_+(\hat{w})|$ \cite{Bou02, Hum90}.
Take $w_i \in W$ such that $t_{-\lambda_i} = w_i \tau_i \in \widehat{W}$ for an affine Dynkin diagram automorphism $\tau_i$.
For $i \in I \setminus \{0\}$, we collect some well-known properties of $t_{-\lambda_i}$ as follows.

\begin{prop} \label{prop: properties of fundamental translations}
\begin{enumerate}[\em (1)]
    \item If $\sfA$ is not of type $A_{2n}^{(2)}$, then we have 
        \begin{equation*}
            \sfDel_+(t_{-\lambda_i}) = \left\{ \alpha + k \gamma_\alpha \delta  \, \left| \, \alpha \in \mr{\sfDel}_+,\, 0 \le k < \frac{1}{\gamma_\alpha}(\lambda_i, \alpha) \right.  \right\},
        \end{equation*}
        where $\mr{\sfDel}_+$ is the set of positive roots for $\mr{\g}$. 
        \smallskip

    \item If $\sfA$ is of type $A_{2n}^{(2)}$, then we have 
        \begin{align*}
            \sfDel_+(t_{-\lambda_i}) &=  \left\{ \alpha + k \delta  \, \left| \, \alpha \in \mr{\sfDel}_+,\, 0 \le k < (\lambda_i, \alpha) \right.  \right\}
            \\
            & \qquad \bigcup
            \left\{ 2\alpha + (2k+1) \delta   \, \left| \, \alpha \in (\mr{\sfDel}_+)_s,\, 0 \le k < (\lambda_i, \alpha) \right.  \right\},
        \end{align*}
        where $(\mr{\sfDel}_+)_s$ is the set of positive short roots for $\mr{\g}$.
        \smallskip

    \item For $k \in I$, we have
    \begin{equation*}
		\ell(s_k t_{-\lambda_i}) =
		\begin{cases}
			\ell(t_{-\lambda_i}) + 1 & \text{if $k \neq i$,} \\
			\ell(t_{-\lambda_i}) - 1 & \text{if $k = i$,}
		\end{cases}
		\qquad
		\ell(t_{-\lambda_i} s_k) =
		\begin{cases}
			\ell(t_{-\lambda_i}) + 1 & \text{if $k \neq 0$,} \\
			\ell(t_{-\lambda_i}) - 1 & \text{if $k = 0$.}
		\end{cases}
    \end{equation*}

    \item If $t_{-\lambda_i} = w_i \tau_i$ for $w_i \in W$ and $\tau_i \in\mc{T}$, then $w_i$ has a reduced expression $s_{i_1}\dots s_{i_\ell}$ such that $s_{i_1} = s_i$ and $s_{i_\ell} = s_{\tau_i(0)}$. 
    Furthermore, if $a_i = 1$, then the reduced expression $s_{i_1}\dots s_{i_\ell}$ of $w_i$ is unique up to $2$-braid relations, where a 2-braid relation means $s_i s_j = s_j s_i$ for $i, j \in I$ such that $a_{ij} = 0$.
\end{enumerate}
\end{prop}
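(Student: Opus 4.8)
The plan is to derive all four parts from the single fundamental computation of $t_{-\lambda_i}$ acting on simple roots, combined with the standard theory of length functions on (extended) affine Weyl groups. First I would record the explicit formula $t_{-\lambda_i}(\alpha) = \alpha + \langle \alpha, K\rangle(-\lambda_i) - ((\alpha,-\lambda_i) + \tfrac12(\lambda_i,\lambda_i)\langle\alpha,K\rangle)\delta$ from the definition, and specialize it using that $\lambda_i \in \chd$ satisfies $\langle\lambda_i, K\rangle = 0$ (so $\lambda_i$ is a level-$0$ weight; this is exactly the remark preceding the proposition, modulo the $A_{2n}^{(2)}$ normalization which I would handle separately). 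This gives $t_{-\lambda_i}(\beta + k\delta) = \beta + (k + (\beta,\lambda_i)/\text{(suitable factor)})\delta$ for a finite root $\beta$, so that $t_{-\lambda_i}$ preserves the "classical part'' of a real affine root and only shifts the $\delta$-coefficient in a controlled way.

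For parts (1) and (2): a positive real root $\alpha + k\gamma_\alpha\delta$ (resp.\ the two families in the $A_{2n}^{(2)}$ case) lies in $\sfDel_+(t_{-\lambda_i}) = \sfDel_+ \cap t_{-\lambda_i}\sfDel_-$ iff $t_{-\lambda_i}^{-1}$ sends it to a negative root. Since $t_{-\lambda_i}^{-1} = t_{\lambda_i}$ shifts the $\delta$-coefficient by $-(\beta,\lambda_i)$ (up to the factor $\gamma_\beta$), the condition becomes the stated inequality $0 \le k < \gamma_\alpha^{-1}(\lambda_i,\alpha)$: one checks that the classical part $\beta = \alpha$ is made negative precisely when the new $\delta$-coefficient drops below the threshold. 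The $A_{2n}^{(2)}$ case is the same argument but bookkeeping the fact that $2\alpha + (2k+1)\delta$ are roots only for short $\alpha$, which forces the parity and the split into two families; I would also use that $(\lambda_i,\alpha)$ is automatically a nonnegative integer here since $\lambda_i \in \nu(\mr{\sfP}^\vee)$. A sanity check is that the cardinality of the right-hand set equals $\sum_{\alpha\in\mr{\sfDel}_+}\lceil \gamma_\alpha^{-1}(\lambda_i,\alpha)\rceil$-type expressions which should match $\ell(t_{-\lambda_i})$, consistent with part (3).

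For part (3): this is the standard criterion $\ell(s_k w) = \ell(w) \pm 1$ according to whether $w^{-1}(\alpha_k) \in \sfDel_\mp$, together with its right-handed analogue $\ell(w s_k) = \ell(w) \pm 1$ according to $w(\alpha_k) \in \sfDel_\pm$. Using the explicit action computed above one checks $t_{\lambda_i}(\alpha_k) = \alpha_k + (\delta\text{-term})$ is positive for $k \ne i$ and negative for $k = i$ (the latter because $(\alpha_i,\lambda_i) > 0$ forces the $\delta$-coefficient negative), giving the left formula; the right formula follows symmetrically after tracking that the relevant node on the right is $\tau(0)$ rather than $0$, using $t_{-\lambda_i} = w_i\tau_i$ and that $\tau_i$ permutes simple roots. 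For part (4): existence of a reduced expression starting with $s_i$ is immediate from part (3) (left descent at $i$), and ending with $s_{\tau(0)}$ from the right descent statement; uniqueness up to $2$-braid (i.e.\ commutation) relations follows because $t_{-\lambda_i}$ is a translation by a \emph{minuscule-type} coweight/weight — the combinatorics of $\sfDel_+(t_{-\lambda_i})$ from parts (1)--(2) show each positive root appears with multiplicity at most one in the relevant sense, so the only braid moves available are commutations; alternatively one invokes the known fact that translations by fundamental (co)weights are fully commutative elements. The main obstacle I anticipate is part (4): pinning down "up to $2$-braid relations'' rigorously requires either a careful analysis of the poset $\sfDel_+(t_{-\lambda_i})$ under the convex/biconvex order (showing no chain of length $3$ realizing an $m_{jk} \ge 3$ braid), or citing the fully-commutative classification; I would lean on the references to \cite[Section 3]{JKP25} and standard sources \cite{Bou02, Hum90} for this point rather than reproving it from scratch.
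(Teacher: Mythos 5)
The paper does not actually prove this proposition: it records the four statements as well known, ``deduced from its definition,'' and refers to \cite[Section 3]{JKP25} and the references therein. Measured against the standard arguments behind those citations, your treatment of (1)--(3) is correct and is essentially that route: since every real root is level zero and $(\delta,\lambda_i)=0$, the translation formula gives $t_{-\lambda_i}(\mu)=\mu+(\mu,\lambda_i)\delta$ on the relevant span, so membership of $\alpha+k\gamma_\alpha\delta$ in $\sfDel_+\cap t_{-\lambda_i}\sfDel_-$ reduces to the stated inequality (and the parity bookkeeping for the family $2\alpha+(2k+1)\delta$ in type $A_{2n}^{(2)}$ works out: $2k+1<2(\lambda_i,\alpha)$ is exactly $0\le k<(\lambda_i,\alpha)$); part (3) is the descent criterion applied to $t_{\lambda_i}(\alpha_k)=\alpha_k-(\alpha_k,\lambda_i)\delta$ and $t_{-\lambda_i}(\alpha_0)=\alpha_0-(\theta,\lambda_i)\delta$, and you do not even need to route the right-hand statement through $\tau_i(0)$; that conversion ($\ell(t_{-\lambda_i}s_0)=\ell(w_is_{\tau_i(0)})$) is only needed to feed the boundary letters into (4).

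The genuine gap is in (4). From (3) you correctly obtain that $s_i$ is the unique left descent and $s_{\tau_i(0)}$ the unique right descent of $w_i$, so every reduced expression has the stated first and last letters. But the uniqueness up to commutations cannot be obtained from the ``known fact'' you propose to invoke: translations by fundamental (co)weights are \emph{not} fully commutative in general. For instance, in type $D_4^{(1)}$ with $i=2$ one has $\lambda_2=\nu(\theta^\vee)$, $\tau_2=e$ and $w_2=t_{-\lambda_2}=s_\theta s_0$, with reduced expression $s_2s_1s_3s_4\,s_2\,s_4s_3s_1s_2\,s_0$; applying the braid relation $s_4s_2s_4=s_2s_4s_2$ yields another reduced expression with a different multiset of letters (one $s_4$ and four $s_2$'s instead of two and three), so the two expressions are not related by commutations. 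Hence your fallback fact is false at this level of generality, and your other suggestion (``each positive root appears with multiplicity at most one'') is not yet an argument --- it is precisely where a (co)minuscule-type hypothesis enters: when every root of $\sfDel_+(t_{-\lambda_s})$ has $\alpha_s$-coefficient $1$, the element $w_s$ is a dominant ($\lambda$-)minuscule element and full commutativity follows from the Proctor--Stembridge theory (no reduced word can contain an alternating factor of length $m_{jk}\ge 3$). So either prove (4) under that hypothesis, which covers all the nodes the paper actually uses, or do what the paper does and cite the precise statement in \cite{JKP25}; as a blanket claim for every $i\in I\setminus\{0\}$ it requires this extra care.
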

\begin{proof}
The properties (1)--(3) follow from the definition of $t_{-\lambda_i}$.
For example, see \cite[Section 3]{JKP25} and the references therein for more details. 
Note that $\sfDel_+(t_{-\lambda_i}) = \sfDel_+(w_i)$ and $W$ is a Coxeter group \cite[Proposition 3.13]{Kac}.
Let us prove (4). The existence of the reduced expression of $w_i$ satisfying the condition follows from (3) and \cite[Lemma 3.11]{Kac}.
Suppose that $a_i = 1$. In this case, one can check that $w_i \in \mr{W}$ and $(\lambda_i, \alpha) \in \{ 0, 1\}$ for $\alpha \in \mr{\sfDel}^+$.
    By (1), we have $\alpha + \beta \notin \sfDel_+(w_i)$ for $\alpha, \beta \in \sfDel_+(w_i)$.
    Recall \cite[Proposition 3.13]{Kac} for the braid relations in $W$.
    Any reduced expression of $w_i$ cannot have braid relations different from $2$-braid relations, otherwise there exist $\alpha, \beta \in \sfDel_+(w_i)$ such that $\alpha + \beta \in \sfDel_+(w_i)$, which is not possible (cf.~\cite{Pap94}).
    Since any reduced expression of $w_i$ can be transformed into another using only the braid relations \cite{Iwa64}, the uniqueness follows. 

For type $A_{2n}^{(2)}$, our convention for the Dynkin diagram (see Table \ref{tab:dynkins}) reverses the numbering of the simple roots used in \cite{Kac}. Therefore, for completeness, we include the proof of (2) in this case.
It is known in \cite[Proposition 5.10]{Kac} that $\sfDel$ is characterized by
\begin{equation} \label{eq:char. of real roots}
    \sfDel = \left\{\left. \alpha = \sum_{i=0}^n k_i \alpha_i \in \sfQ \, \right| \, \left| \alpha \right|^2 > 0 \text{ and } \frac{k_i|\alpha_i|^2}{|\alpha|^2} \in \Z \right\},
\end{equation}
and $\sfDel_s$ is given by 
\begin{equation} \label{eq:char. of short real roots}
    \sfDel_s = \left\{ \alpha \in \sfQ \, \left| \, |\alpha|^2 = \min_i |\alpha_i|^2  \right.\right\}.
\end{equation} 

For type $A_{2n}^{(2)}$, recall that $\delta = \alpha_0 + 2\alpha_1 + \dots + 2\alpha_{n-1} + 2\alpha_n$, and the short (resp.~long) real roots have square length $2$ (resp.~$8$). Note that $\alpha_0$ is a long root.
Set $\sfDel_m = \sfDel \setminus (\sfDel_s \cup \sfDel_l)$. Then the real roots in $\sfDel_m$ have square length $4$.
For type $A_{2n}^{(2)}$ with $n \ge 2$, 
we claim that $\sfDel = \sfDel_s \cup \sfDel_m \cup \sfDel_l$, where 
\begin{align} 
    \sfDel_s &= \{ \alpha + k\delta \mid \alpha \in \mr{\sfDel}_s,\, k \in \Z \}, \label{eq:Dels for A2n2} \\
    \sfDel_m &= \{ \alpha + k\delta \mid \alpha \in \mr{\sfDel}_l,\, k \in \Z \}, \label{eq:Delm for A2n2} \\
    \sfDel_l &= \{ 2\alpha + (2k+1)\delta \mid \alpha \in \mr{\sfDel}_s,\, k \in \Z \}. \label{eq:Dell for A2n2}
\end{align}

If $\alpha = \sum_{i=0}^n k_i \alpha_i \in \sfDel_s$, then we have $|\alpha|^2 = |\alpha-k_0\delta|^2$ and hence $\alpha-k_0\delta \in \mr{\sfDel}_s$ by \eqref{eq:char. of short real roots}. This implies \eqref{eq:Dels for A2n2}.
Next, if $\alpha = \sum_{i=0}^n k_i \alpha_i \in \sfDel_m$, then it follows from \eqref{eq:char. of real roots} that $k_n$ is divisible by $2$. Thus the $n$-th coefficient of $\alpha-k_0\delta$ is also divisible by $2$, so that $\alpha-k_0\delta \in \mr{\sfDel}_l$ by \eqref{eq:char. of real roots} and $|\alpha-k_0\delta|^2 = |\alpha|^2 = 4$.
This proves \eqref{eq:Delm for A2n2}.
%
Finally, if $\alpha = \sum_{i=0}^n k_i \alpha_i \in \sfDel_l$, then \eqref{eq:char. of real roots} implies that $k_1, \dots, k_{n-1}$ are divisible by $2$, and $k_n$ is divisible by $4$, so we have
\[
    \frac{1}{2}(\alpha-k_0\delta) \in \sfQ, \quad
    \left| \frac{1}{2}(\alpha-k_0\delta) \right|^2 = 2.
\]
By \eqref{eq:char. of short real roots}, $\frac{1}{2}(\alpha-k_0\delta) \in \mr{\sfDel}_s$ so that
$k_n - 2k_0 = 2$ (cf.~\eqref{eq: positive short roots of Bn}). 
Since $k_n$ is divisible by $4$, we conclude that $k_0$ is odd, thus we have \eqref{eq:Dell for A2n2}.
%
Similarly, for type $A_2^{(2)}$, we have 
\begin{align} 
    \sfDel_s &= \{ \alpha + k\delta \mid \alpha \in \mr{\sfDel},\, k \in \Z \}, \label{eq:Dels for A22} \\
    \sfDel_l &= \{ 2\alpha + (2k+1)\delta \mid \alpha \in \mr{\sfDel},\, k \in \Z \}. \label{eq:Dell for A22}
\end{align}

Since $\tau\left(\sfDel_{\pm}\right) = \sfDel_{\pm}$ for $\tau \in \mc{T}$, we have $\sfDel^+\left(t_{-\lambda_i}\right) = \sfDel_+(w_i)$.
Let $\beta \in \sfDel_+\left(t_{-\lambda_i}\right)$ be given.
Suppose that $\beta$ is a long positive root.
Then there exists a negative long root $\gamma$ such that $\beta = t_{-\lambda_i}\left(\gamma\right)$.
By \eqref{eq:Dell for A2n2} and \eqref{eq:Dell for A22}, 
$\gamma$ is in $\mr{\sfDel}_-$ or given as $2\alpha + (2k+1)\delta$ for $\alpha \in \mr{\sfDel}_s$ and $k < 0$.
Since $\langle \gamma, K \rangle = 0$, it follows from \eqref{eq:translation} that 
\begin{equation*}
	\beta = t_{-\lambda_i}(\gamma) = \gamma + \left( \gamma, \lambda_i \right)\delta \in \sfDel_+.
\end{equation*}
If $\gamma \in \mr{\sfDel}_-$, then $\beta \in \sfDel_-$, which contradicts $\beta \in \sfDel_+$. So we assume that $\gamma = 2\alpha + (2k+1)\delta$ for $\alpha \in \mr{\sfDel}_s$ and $k < 0$. Then we have
\begin{equation*}
	\beta 
    = 2\alpha + (2l+1)\delta,
\end{equation*}
where $l = k + (\alpha, \lambda_i)$.
Since $\beta \in \sfDel_+$ and $k < 0$, we have $\alpha \in (\mr{\sfDel}_+)_s$ and $0 \le l < (\alpha, \lambda_i)$.
Conversely, let $\beta = 2\alpha + (2k+1)\delta$ for $\alpha \in (\mr{\sfDel}_+)_s$ and $0 \le k < (\alpha, \lambda_i)$. Then we have
\begin{equation*}
	t_{\lambda_i}(\beta) 
    = 2t_{\lambda_i}(\alpha) + (2k+1)\delta 
    = 2\alpha + ( 2\left(k-(\alpha,\lambda_i)\right) +1) \delta \in \sfDel_-,
\end{equation*}
which implies $\beta \in \sfDel_+ \cap t_{-\lambda_i}(\sfDel_-) = \sfDel_+(t_{-\lambda_i})$.
The other cases can be proved similarly.
\end{proof}

\begin{rem}
{\em 
	The statement of \cite[Corollary 3.7]{JKP25} should be revised as in Proposition \ref{prop: properties of fundamental translations}(4). For example, for type $B_3^{(1)}$ with $i = 2$, a reduced expression of $w_2$ is given by $s_2s_3s_2s_1s_2s_3s_2s_0$, where $s_2s_1s_2 = s_1s_2s_1$, hence the uniqueness fails in this case.
}
\end{rem}

\section{Quantum groups and dual canonical basis} \label{sec:quantum groups and dual canonical basis}

\subsection{Quantum groups} \label{subsec: quantum group}
Fix an indeterminate $q$. We put $q_i = q^{d_i}$ and
{\allowdisplaybreaks
\begin{gather*}
[m]_{q_i}=\frac{q_i^m-q_i^{-m}}{q_i-q_i^{-1}}\quad (m\in \Z_+),\quad
[m]_{q_i}!=[m]_{q_i}[m-1]_{q_i}\cdots [1]_{q_i}\quad (m\geq 1),\quad [0]_{q_i}!=1, \\
\begin{bmatrix} m \\ k \end{bmatrix}_{q_i} = \frac{[m]_{q_i}[m-1]_{q_i}\cdots [m-k+1]_{q_i}}{[k]_{q_i}}\quad (0\leq k\leq m).
\end{gather*}}
\!\!If there is no confusion, then we often use the subscript $i$ instead of $q_i$ in the above notations for simplicity.

%
Set $\bk = \bigcup_{n \in \N} \C(\!(q^{1/n})\!)$ to be the base field. The quantum group corresponding to $\g$, denoted by $U_q(\g)$, is the $\bk$-algebra generated by the symbols $e_i$, $f_i$ $(i \in I)$, and $q^h$ $(h \in \sfP^\vee)$ subject to the following relations:
{\allowdisplaybreaks
\begin{gather}
	q^0 = 1, \quad q^h q^{h'} = q^{h+h'}, \label{eq: relations 1} \\
	q^h e_i q^{-h}= q^{\langle \alpha_i, h \rangle} e_i,\quad q^h f_i q^{-h}= q^{-\langle \alpha_i, h \rangle} f_i,\quad 
	e_if_j-f_je_i=\delta_{ij}\frac{k_i-k_i^{-1}}{\qi-\qi^{-1}}, \label{eq: relations 2} \\
	\sum_{m=0}^{1-a_{ij}}(-1)^{m} \qbn{1-a_{ij}}{m}_i e_i^{1-a_{ij}-m}e_je_i^{m}=0, \quad (i \neq j), \label{eq: qsr1} \\
	\,\sum_{m=0}^{1-a_{ij}}(-1)^{m} \qbn{1-a_{ij}}{m}_i f_i^{1-a_{ij}-m}f_jf_i^{m}=0, \quad (i \neq j), \label{eq: qsr2}
\end{gather}}
\!\!where $k_i = q^{d_ih_i}$.
We set $e_i^{(m)}=e_i^m/[m]_i!$ and $f_i^{(m)}=f_i^m/[m]_i!$ for $i \in I$ and $m \in \Z_+$.
An element $x \in U_q(\g)$ is said to be homogeneous if there exists $\xi \in \sfP$ such that $q^h x q^{-h} = q^{\langle \xi, h \rangle} x$ for all $h \in \sfP^\vee$, where we denote $\xi$ by ${\rm wt}(x)$, called the weight of $x$.

Let $U_q^0(\g)$ be the $\bk$-subalgebra of $U_q(\g)$ generated by $q^h$ for $h \in \sfP^\vee$.
Also, we denote by $U_q^+(\g)$ (resp.~$U_q^-(\g)$) the $\bk$-subalgebra of $U_q(\g)$ generated by $e_i$ (resp.~$f_i$) for $i \in I$.
Then $U_q(\g)$ has the triangular decomposition as a $\bk$-vector space,
\begin{equation} \label{eq:triangular decomp}
	U_q(\g) \cong U_q^+(\g) \otimes U_q^0(\g) \otimes U_q^-(\g).
\end{equation}
The negative half \,$U_q^-(\mf{g})$ has a root space decomposition, that is, $U_q^-(\mf{g})=\bigoplus_{\beta\in \sfQ_-}U_q(\mf{g})_{\beta}$, where $U_q(\mf{g})_{\beta}=\{\,x\,|\,k_ixk_i^{-1}=q^{(\beta, \alpha_i)}x\ \text{ for } 0\le i\le n \,\}$.
Similarly, the positive half \,$U_q^+(\g)$ also has a root space decomposition.

Let us adopt a Hopf algebra structure on $U_q(\g)$, where the comultiplication $\Delta$ and the antipode $S$ are given by 
{\allowdisplaybreaks
\begin{gather}
	\Delta(q^h)= q^h \ot q^h, \quad
	\Delta(e_i)= e_i \ot 1 + k_i \ot e_i, \quad
	\Delta(f_i)= f_i \ot k_i^{-1} + 1 \ot f_i, \label{eq:comultiplication} \\
	S(q^h)=q^{-h}, \ \ S(e_i)=-k_i^{-1} e_i, \ \  S(f_i)=-f_i k_i. \label{eq:antipode}
\end{gather}}
\!\!We remark that $\Delta$ is denoted by $\Delta_+$ in \cite{Kas91a}, which coincides with the choice of $\Delta$ in \cite{HJ}.

\subsection{Unipotent quantum coordinate rings}
For $i \in I$, let $T_i$ be the $\bk$-algebra automorphism of $U_q({\mf g})$ in \cite{Lu10}, which satisfies
{\allowdisplaybreaks
\begin{align*}
&
T_i(q^h) = q^{s_i(h)}, \\ 
& 
T_i(e_i) = -f_i k_i, \quad\,\,\,\,\,
T_i(e_j) = \sum_{r+s=-a_{ij}} (-1)^r q_i^{-r} e_i^{(s)} e_j e_i^{(r)}\quad (i\neq j), \\ 
&T_i(f_i) = -k_i^{-1}e_i, \quad 
T_i(f_j) = \sum_{r+s=-a_{ij}} (-1)^r q_i^{ r} f_i^{(r)} f_j f_i^{(s)}\quad\,\, (i\neq j), 
\end{align*}
}\!\!
for $j \in I$ and $h \in \sfP^\vee$, where $s_i(h) = h - \langle \alpha_i, h \rangle\, h_i$.
Note that $T_i=T''_{i,1}$ and $T_i^{-1} = T_{i,-1}'$ in \cite{Lu10}.
It is known in \cite{Lu10} (cf.~\cite{Sai94}) that $T_i^{-1} = \ast T_i \ast$, where $\ast$ is the ${\bf k}$-algebra anti-automorphism of $U_q(\g)$ given by $(e_i)^\ast = e_i$, $ (f_i)^\ast=f_i$ $(0\le i\le n)$, and $(q^h)^\ast = q^{-h}$ $(h \in \sfP^\vee)$. 

Let $w \in W$ be given.
For $\wtd{w} = (i_1,\dots,i_\ell)\in R(w)$, we have $\sfDel_+(w)=\{\,\beta_k\,|\,1\le k\le \ell\,\}$ (e.g.~see \cite{Pap94, Pap95}), where $\beta_k=s_{i_1}\dots s_{i_{k-1}}(\alpha_{i_k})$ for $1\le k\le \ell$.
For ${\bf c}=(c_1,\dots,c_\ell)\in \Z_+^\ell$, we define
\begin{equation} \label{eq:PBW monomial}
F\left({\bf c},\wtd{w}\right)=F(c_1\beta_1) \cdots F(c_\ell\beta_\ell),
\end{equation}
where $F(c_k\beta_k)=T_{i_1}\dots T_{i_{k-1}}( f_{i_k}^{(c_k)} )$ for $1 \le k \le \ell$.
Assume that $\sfDel_+(w)$ is linearly ordered by $\beta_1<\dots<\beta_\ell$.
The $q$-commutation relations of \eqref{eq:PBW monomial} are known as follows:
\begin{equation}\label{eq:LS formula}
F\left(c_j\beta_j\right)F\left(c_i\beta_i\right)-q^{-(c_i\beta_i,c_j\beta_j)}F\left(c_i\beta_i\right)F\left(c_j\beta_j\right)
=\sum_{{\bf c}'}f_{{\bf c}'}F\left({\bf c}',\wtd{w}\right)
\end{equation}
for $i<j$ and $c_i,c_j \in \Z_+$, where the sum is over ${\bf c}'=(c'_k)$ such that $c_i\beta_i+c_j\beta_j=\sum_{i\le k \le j}c'_{k}\beta_k$ with $c'_i<c_i$, $c'_j<c_j$ and $f_{{\bf c}'} \in {\bf k}$ \cite{LeSo91} (cf.~\cite{Ki12}).

\begin{df} \label{df:Uq-w}
{\em 
For $w \in W$, we denote by $U_q^-(w)$ the vector space over ${\bf k}$ generated by $\left\{\,F\left({\bf c},\wtd{w}\right)\,\left|\,{\bf c}\in \Z_+^\ell\right.\right\}$.
}
\end{df}

\noindent
Note that $U_q^-(w)$ does not depend on the choice of $\wtd{w}\in R(w)$, and it is the ${\bf k}$-subalgebra of $U_q^-(\mf{g})$ generated by $\left\{\left.F(\beta_k)\,\right|\,1\le k\le \ell\,\right\}$ by \eqref{eq:LS formula}.

Let $\{ F^{\rm up}({\bf c},\wtd{w}) \mid {\bf c} \in \Z_+^\ell \}$ be the dual basis of $\{ F({\bf c},\wtd{w}) \mid {\bf c} \in \Z_+^\ell \}$ with respect to the Kashiwara bilinear form $(\,\cdot\,\,,\,\,\cdot\,)_{\rm K}$ on $U_q^-(\g)$ \cite[Section 3.4]{Kas91a} (see also \eqref{eq:bilinear form}).
Since $U_q^-(\g)$ is a (twisted) self-dual bialgebra with respect to $(\,\cdot\,\,,\,\,\cdot\,)_{\rm K}$, we regard $F^{\rm up}({\bf c},\wtd{w})$ as an element of $U_q^-(\g)$ by
\begin{equation} \label{eq:dual PBW monomial}
	F^{\rm up}({\bf c},\wtd{w})
	=
	\frac{1}{\left(\,F({\bf c},\wtd{w})\,,\,F({\bf c},\wtd{w})\,\right)_{\rm K}} F({\bf c},\wtd{w}) \in U_q^-(\g).
\end{equation}
Let $\mc{A} = \mathbb{C}\left[q^{\pm 1}\right]$ and let $U_q^-(w)^{\rm up}_{\mc{A}}$ be the $\mc{A}$-lattice generated by $F^{\rm up}({\bf c},\wtd{w})$ for ${\bf c} \in \Z_+^\ell$.
Then the $\bk$-subalgebra $U_q^-(w)$ is called the \emph{unipotent quantum coordinate ring} associated to $w$, since $\C\ot_{\mc{A}}U_q^-(w)^{\rm up}_{\mc{A}}$ is isomorphic to the coordinate ring of the unipotent subgroup $N(w)$ of the Kac--Moody group associated to $w$ (see \cite{GLS13a, Ki12} for more details).

\subsection{Dual canonical basis} \label{subsec:DCB}
Let us recall the (dual) canonical basis of $U_q^-(\mf{g})$ (see \cite{Kas91a, Kas93a, Kas02a} for more details).
Given $0\le i\le n$, there exists a unique ${\bf k}$-linear map $e'_i: U_q^-(\mf{g}) \rightarrow U_q^-(\mf{g})$ such that $e'_i(1)=0$, $e'_i(f_j)=\delta_{ij}$ for $0\le j\le n$, and
\begin{equation} \label{eq:derivation}
e'_i(xy)=e'_i(x)y + q^{({\rm wt}(x),\alpha_i)}xe'_i(y),
\end{equation}
for homogeneous $x,y\in U_q^-(\mf{g})$.
Then there exists a unique non-degenerate symmetric ${\bf k}$-valued bilinear form $(\ ,\ )$ on $U_q^-(\mf{g})$ such that
\begin{equation} \label{eq:bilinear form}
(1,1)=1,\quad (f_ix,y)=(x,e'_i(y)),
\end{equation}
for $0\le i\le n$ and $x,y\in U_q^-(\mf{g})$.

For any homogeneous $x\in U_q^-(\mf{g})$ and $0 \le i \le n$, we have $x=\sum_{k\ge 0}f_i^{(k)}x_k$, where $e'_i(x_k)=0$ for $k\ge 0$. Then we define $\wtd{f}_ix=\sum_{k\ge 0}f_i^{(k+1)}x_k$.
Let $\mc{A}_0$ denote the subring of ${\bf k}$ consisting of rational functions regular at $q=0$.
Let $L(\infty)$ be the $\mc{A}_0$-span of $\wtd{f}_{i_1}\dots \wtd{f}_{i_r}1$ for $r \ge 0$ and $0\le i_1,\dots,i_r\le n$, and let 
\begin{equation*}
\begin{split}
B(\infty)&=\{\,\wtd{f}_{i_1}\dots \wtd{f}_{i_r}1 \!\!\pmod{qL(\infty)} \,|\,r\ge 0,\, 0\le i_1,\dots,i_r\le n\,\}\setminus\{0\}\subset L(\infty)/qL(\infty).
\end{split}
\end{equation*}
The pair $(L(\infty),B(\infty))$ is called the crystal base of $U_q^-(\mf{g})$.

Let $U_q^-(\mf{g})_{\mc{A}}$ be the $\mc{A}$-subalgebra of $U_q^-(\mf{g})$ generated by $f_i^{(k)}$ for $0\le i\le n$ and $k\in \Z_+$. Let $- : U_q^-(\mf{g})\rightarrow U_q^-(\mf{g})$ be the automorphism of $\C$-algebras given by $\ov{q}=q^{-1}$ and $\ov{f_i}=f_i$ for $0\le i\le n$.
Then $(L(\infty),\ov{L(\infty)},U_q^-(\mf{g})_{\mc{A}})$ is balanced, that is, 
the map
\begin{equation}\label{eq:balanced}
\xymatrixcolsep{2pc}\xymatrixrowsep{-0.3pc}\xymatrix{
E:=L(\infty)\cap \ov{L(\infty)}\cap U_q^-(\mf{g})_{\mc{A}} \ \ar@{->}[r]  &\ L(\infty)/qL(\infty) \\
	x \ \ar@{|->}[r] & \, x \,\, \scalebox{0.8}{\text{(mod $qL(\infty)$)}}
}
\end{equation}
is a $\C$-linear isomorphism.
Let $G$ denote the inverse of the map \eqref{eq:balanced}. 
Then 
\begin{equation*}
G(\infty):=\{\,G(b)\,|\,b\in B(\infty)\,\}
\end{equation*} 
is an $\mc{A}$-basis of $U_q^-(\mf{g})_{\mc A}$, which is called the canonical basis or global crystal basis.
Let
\begin{equation*}
G^{\rm up}(\infty)=\{\,G^{\rm up}(b)\,|\,b\in B(\infty)\,\}
\end{equation*}
be the dual basis of $G(\infty)$ with respect to the bilinear form \eqref{eq:bilinear form}, that is, 
$(G^{\rm up}(b),G(b'))=\delta_{bb'}$ for $b,b'\in B(\infty)$.
We call $G^{\rm up}(\infty)$ the dual canonical basis of 
$U_q^-(\mf{g})^{\rm up}_{\mc{A}}:=\{\,x\in U_q^-(\mf{g})\,|\,(x,U_q^-(\mf{g})_{\mc{A}})\in \mc{A}\,\}$.

Given a dominant integral weight $\La \in \sfP_+$, let $V(\La)$ be the irreducible highest weight module over $U_q(\mf{g})$. 
Let $B(\La)$ and $G(\La)=\{\,G_\La(b)\,|\,b\in B(\La)\,\}$ denote the crystal and canonical basis of $V(\La)$, respectively. 
It is known in \cite{Kas93b} that $L(\infty)^\ast=L(\infty)$ and $B(\infty)^\ast=B(\infty)$.
We may regard $B(\La)\subset B(\infty)$ by
\begin{equation}\label{eq:B(Lambda)}
B(\La)=\{\,b\in B(\infty)\,|\,\varepsilon_i^\ast(b)\leq \langle \La, h_i \rangle \ \text{ for \,} 0\le i\le n\,\},
\end{equation}
where $\varepsilon_i^\ast(b)=\max\{\,k\,|\,b^\ast=\wtd{f}_i^k (b_0^\ast)\ \text{for some $b_0\in B(\infty)$}\,\}$.
We have $G_\La(b)=\pi_\La(G(b))$ for $b\in B(\Lambda)$, where $\pi_\La : U_q^-(\mf{g})\longrightarrow V(\La)$ is the canonical projection.

Let $G^{\rm up}(\La)=\{\,G^{\rm up}(b)\,|\,b\in B(\La)\,\}$ be the dual basis of $G(\La)$ with respect to the bilinear form on $V(\La)$ in \cite[(4.2.4), (4.2.5)]{Kas93a}. 
Let 
\begin{equation*}
\xymatrixcolsep{2pc}\xymatrixrowsep{4pc}\xymatrix{
\iota_\La : V(\La)^\vee \ar@{->}[r]  & U_q(\mf{g})^\vee.}
\end{equation*}
be the dual of $\pi_\La$, where $V(\La)^\vee$ and $U_q^-(\mf{g})^\vee$ are the duals of $V(\La)$ and $U_q^-(\mf{g})$ with respect to \cite[(4.2.4), (4.2.5)]{Kas93a} and \eqref{eq:bilinear form}, respectively.
Then we have 
\begin{equation}\label{eq:embedding and dual canonical basis}
\iota_\La(G^{\rm up}_\La(b))=G^{\rm up}(b),
\end{equation}
for $b\in B(\La)$. Here we identify $V(\La)^\vee$ with $V(\La)$, and $U_q^-(\mf{g})^\vee$ with $U_q^-(\mf{g})$.

\begin{rem} \label{rem: e' and te}
{\em
For $b \in B(\La)$ and $m \in \Z_{>0}$, it is known in \cite[Lemma 5.1.1]{Kas93a} that 
\begin{equation*} 
\begin{split}
& \te_i^{\,m}b = {\bf 0} \,\, \Longleftrightarrow \,\,
e_i^m G^{\rm up}_\La(b) = 0 \,\, \Longleftrightarrow \,\,
(e_i')^m G^{\rm up}(b) = 0.
\end{split}
\end{equation*}
In particular, for $b\in B(\La)$ with $\varepsilon_i(b)=1$, we have
\begin{equation*} 
G^{\rm up}_\La(\te_ib) = e_iG^{\rm up}_\La(b), 
\end{equation*}
which is equivalent to $G^{\rm up}(\te_ib) = e_i'G^{\rm up}(b)$ (cf. \cite[Theorem 3.14]{Ki12}). 
Thus one may calculate the action of $e_i'$ on some root vectors by means of crystal structure of $B(\infty)$ (see Example \ref{ex: xz for twisted types}).
}
\end{rem}

In the rest of this subsection, we fix $s \in \I$ such that the fundamental $\mr{\g}$-module with highest weight $\ov{\La_s}$ is minuscule. We call such a $s \in \I$ the \emph{minuscule node}. The complete list of minuscule nodes for each type is given in Table \ref{tab:minuscule nodes}.

\begin{table}[h!]
    \centering
    \begin{tabular}{c||c|c|c|c|c|c}
       type & $A_n^{(1)}$ & $A_{2n-1}^{(2)}$ & $D_{n+1}^{(2)}$ & $D_n^{(1)}$ & $E_6^{(1)}$ & $E_7^{(1)}$ \\ \hline
       $s$  & $1, 2, \dots, n$ & $1$ & $n$ & $1, n-1, n$ & $1, 5$ & $6$
    \end{tabular}
    \smallskip
    \caption{the minuscule nodes for type $X_N^{(r)}$}
    \label{tab:minuscule nodes}
\end{table}

For any minuscule node $s$, it follows from Proposition \ref{prop: properties of fundamental translations} that $\sfDel_+(t_{-\lambda_s}) \subset \mr{\sfDel}_+$
so that one may regard $U_q^-(w_s)$ as a subalgebra of $U_q^-(\cg)$. In particular, $\theta \in \sfDel_+(t_{-\lambda_s})$, so $F^{\rm up}(\theta) \in U_q^-(w_s)$.
Let $\wtd{w_s} = (i_1, i_2, \dots, i_\ell) \in R(w_s)$ be given.
For $1\le k\le \ell$, we denote ${\bf 1}_{\beta_k} \in B(\infty)$ by
$F^{\rm up}(\beta_k) \!\pmod{qL(\infty)} \in B(\infty)$.

\begin{rem}
{\em 
The reduced expression $\wtd{w_s}$ can be found in \cite[Remark 3.10]{JKP25}, which is unique up to $2$-braid relations by Proposition \ref{prop: properties of fundamental translations}(4).
}
\end{rem}

We now review some properties of dual PBW vectors associated to $w_s$ for any minuscule node $s$ as follows.\footnote{The properties were verified in \cite{JKP23} for types $A_n^{(1)}$ and $D_n^{(1)}$, but the arguments there rely only on general properties of dual canonical basis. Hence the same statements hold for any minuscule node, and thus we omit the proofs.}

\begin{lem}\!\!\!{\em \cite[Lemma 3.6]{JKP23}} \label{lem:dual PBW properties 1}
\,Let $1 \le k \le \ell$ be given.
\begin{enumerate}[\em (1)]
	\item We have 
            \begin{equation*}
                F^{\rm up}(\beta_k)\in \iota_{\La_s}(G^{\rm up}(\La_s))
            \end{equation*}
            In particular, ${\bf 1}_{\beta_k}\in B(\La_s)$ by regarding $B(\La_s)$ as a subset of $B(\infty)$ $(${\em cf.}~\eqref{eq:B(Lambda)}, \eqref{eq:embedding and dual canonical basis}$)$.
	
	\item For $i \in \I$, we have 
                \begin{equation*}
                     e'_i ( F^{\rm up}(\beta_k) ) = 
                    \begin{cases}
 	                  F^{\rm up}(\beta_k - \alpha_i)  & \text{ if }  (\alpha_i, \beta_k) = d_i + \delta_{i,s},  \\
 	                  0 & \text{otherwise,}
                    \end{cases}
                \end{equation*}
            where we understand $F^{\rm up}(0) = 1$.
\end{enumerate}
\end{lem}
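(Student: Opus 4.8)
The plan is to work entirely inside the minuscule module $V(\La_s)$ and transfer statements back to $U_q^-(\cg)$ via the embedding $\iota_{\La_s}$ of \eqref{eq:embedding and dual canonical basis}. For part (1), the key point is that for a minuscule node $s$, every weight of $V(\La_s)$ lies in a single $\cW$-orbit, so each weight space is one-dimensional and every extremal vector $G^{\rm up}_{\La_s}(b)$ with $\varepsilon_i(b) \le 1$ for all $i$ is a ``monomial'' in the $e_i'$. Concretely, using Proposition \ref{prop: properties of fundamental translations}(4) the reduced word $\wtd{w_s}=(i_1,\dots,i_\ell)$ begins at $s$, so the roots $\beta_k = s_{i_1}\cdots s_{i_{k-1}}(\alpha_{i_k})$ run through $\sfDel_+(t_{-\lambda_s}) \subset \mr{\sfDel}_+$, and each PBW vector $F^{\mathrm{up}}(\beta_k)$ is obtained from the highest-weight vector by a chain of divided-power/Kashiwara operators that never exceeds multiplicity one. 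One then checks $F^{\mathrm{up}}(\beta_k) \pmod{qL(\infty)} \in B(\La_s)$ by verifying the condition \eqref{eq:B(Lambda)}, i.e. $\varepsilon_i^\ast({\bf 1}_{\beta_k}) \le \langle \La_s, h_i\rangle = \delta_{is}$, which holds because the $\ast$-crystal structure on the $t_{-\lambda_s}$-string is again minuscule-type; then \eqref{eq:embedding and dual canonical basis} gives $F^{\mathrm{up}}(\beta_k) = \iota_{\La_s}(G^{\mathrm{up}}_{\La_s}({\bf 1}_{\beta_k})) \in \iota_{\La_s}(G^{\mathrm{up}}(\La_s))$.

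For part (2), I would compute $e_i'$ via the crystal dictionary of Remark \ref{rem: e' and te}: since ${\bf 1}_{\beta_k} \in B(\La_s)$ and $V(\La_s)$ is minuscule, one has $\varepsilon_i({\bf 1}_{\beta_k}) \in \{0,1\}$ for every $i \in \I$, so $e_i' G^{\mathrm{up}}(\beta_k) = G^{\mathrm{up}}(\te_i {\bf 1}_{\beta_k})$ when $\varepsilon_i({\bf 1}_{\beta_k}) = 1$ and $e_i' G^{\mathrm{up}}(\beta_k) = 0$ when $\varepsilon_i({\bf 1}_{\beta_k}) = 0$. It then remains to identify $\te_i {\bf 1}_{\beta_k}$ with ${\bf 1}_{\beta_k - \alpha_i}$ and to translate the condition $\varepsilon_i({\bf 1}_{\beta_k}) = 1$ into the numerical criterion $(\alpha_i,\beta_k) = d_i + \delta_{i,s}$. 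The weight of $G^{\mathrm{up}}(\beta_k)$ is $-\beta_k$, so after applying $e_i'$ the weight becomes $-\beta_k + \alpha_i$; in a minuscule module a weight $\mu$ satisfies $\langle \mu, h_i\rangle = 1$ (equivalently the $i$-string through it has length $2$, i.e. $\varepsilon_i = 1$) exactly when the pairing is $+1$, and unwinding $\langle -\beta_k, h_i\rangle = -2(\alpha_i,\beta_k)/(\alpha_i,\alpha_i) = -(\alpha_i,\beta_k)/d_i$ together with the extra correction coming from $\La_s$ at the node $i = s$ yields precisely $(\alpha_i,\beta_k) = d_i + \delta_{i,s}$. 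One also checks $\beta_k - \alpha_i$ is again in $\sfDel_+(t_{-\lambda_s})$ (or is $0$, handled by the convention $F^{\mathrm{up}}(0)=1$) using Proposition \ref{prop: properties of fundamental translations}(1)--(2), so the right-hand side makes sense.

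The main obstacle I anticipate is the bookkeeping at the distinguished node $i = s$: the correction term $\delta_{i,s}$ reflects that $\La_s$ contributes a shift, and one must be careful that the ``minuscule'' multiplicity-one property used to pass from $e_i'$ to a single crystal operator $\te_i$ still applies when the relevant weight string touches the highest weight $\La_s$. Since $V(\La_s)$ is minuscule this never causes a genuine multiplicity issue, but making the weight/pairing computation clean — and confirming that $\te_i {\bf 1}_{\beta_k} = {\bf 1}_{\beta_k - \alpha_i}$ rather than some other element of $B(\La_s)$ — is where the argument needs care; here one can appeal to the $q$-commutation relations \eqref{eq:LS formula} and the uniqueness of the weight space to pin down the image. (As the footnote notes, the arguments are purely formal consequences of dual canonical basis theory already worked out in \cite{JKP23}, so I would mostly cite that and indicate the minuscule-node input.)
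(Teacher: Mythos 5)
First, note that the paper does not actually prove this lemma: it imports it verbatim from \cite[Lemma 3.6]{JKP23}, with a footnote asserting that the arguments there use only general properties of the dual canonical basis and hence apply to every minuscule node. Your closing remark that you would ``mostly cite \cite{JKP23}'' is therefore consistent with what the paper does; the issue is with the independent sketch you give, which has a genuine gap in part (1).

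The gap is this: multiplicity--one of the relevant weight spaces does not, by itself, identify $F^{\rm up}(\beta_k)$ with a dual canonical basis element. The weight space $U_q^-(\g)_{-\beta_k}$ is in general of dimension larger than one, so it contains several elements of $G^{\rm up}(\infty)$, exactly one of which satisfies the criterion \eqref{eq:B(Lambda)}; to conclude $F^{\rm up}(\beta_k)\in\iota_{\La_s}(G^{\rm up}(\La_s))$ you must first know that $F^{\rm up}(\beta_k)$ \emph{is} a dual canonical basis element (exactly, not merely modulo $qL(\infty)$), and then compute $\varepsilon_i^\ast$ of its crystal class. Your phrase ``obtained from the highest-weight vector by a chain of divided-power/Kashiwara operators'' presupposes the $e_i'$-computations of part (2), whose justification via Remark \ref{rem: e' and te} in turn requires part (1); as written the plan is circular. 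The missing input is precisely the nontrivial step in \cite{JKP23}: the dual PBW root vectors $F^{\rm up}(\beta_k)$ are unipotent quantum minors and belong to the dual canonical basis (a result going back to \cite{Ki12}), after which the $\varepsilon_i^\ast$-criterion and the minuscule multiplicity-one argument do finish parts (1) and (2) along the lines you describe. A second, more cosmetic, inaccuracy: $V(\La_s)$ is the integrable highest weight module over the \emph{affine} algebra, which is infinite-dimensional with unbounded weight multiplicities, so it is not minuscule and its weights do not form a single $\cW$-orbit. Your multiplicity-one and $\varepsilon_i\in\{0,1\}$ claims are valid only after observing that the weights $\La_s-\beta_k$ have zero $\alpha_0$-coefficient, so the corresponding weight spaces lie in the $U_q(\cg)$-submodule generated by the highest weight vector, which is the minuscule $\cg$-module of highest weight $\ov{\La_s}$; with that reduction (and with $d_s=1$ for every minuscule node) your translation of $\varepsilon_i=1$ into $(\alpha_i,\beta_k)=d_i+\delta_{i,s}$ is correct.
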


We now consider the following root vector with weight $\theta$, which plays an important role in the present work.
\begin{equation*} 
    \xz := F^{\rm up}(\beta_\ell) = F^{\rm up}(\theta).
\end{equation*}

\begin{lem}\!\!\!{\em \cite[Lemma 3.8]{JKP23}} \label{lem:dual PBW properties 2}
Let $J_0= \left\{ \, i\in \I \, | \, (\al_i, \theta)=0 \, \right\}$ and $J_1= \I \, \setminus \, J_0$.
\begin{enumerate}[\em (1)]
    \item For $i\in \I$, we have
	\begin{equation*}
		\left\{
			\begin{array}{ll}
				e_i'(\xz) = 0 & \text{if $i \in J_0$}, \\
				(e_i')^2 (\xz) = 0 & \text{if $i \in J_1$}.
			\end{array}
		\right.		
	\end{equation*}
	
    \item For $i\in J_1$, we have
        \begin{equation*}
	   \xz \cdot e_i'(\xz) = q^{-d_0(2+a_{0i})} e_i'(\xz) \cdot \xz\,.
        \end{equation*}
\end{enumerate}
\end{lem}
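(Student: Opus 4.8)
The plan is to deduce both statements from the minuscule-node description of the dual PBW vectors, in particular from Lemma \ref{lem:dual PBW properties 1} and the explicit combinatorics of $\sfDel_+(t_{-\lambda_s})$ recorded in Proposition \ref{prop: properties of fundamental translations}. Since $s$ is a minuscule node, $\sfDel_+(t_{-\lambda_s}) \subset \mr{\sfDel}_+$, so $\theta = \delta - a_0\alpha_0$ lies in $\mr{\sfDel}_+$; concretely $\theta$ is the highest root of $\cg$ of the relevant length. The first task is to compute $(\al_i,\theta)$ for each $i \in \I$: writing $\theta = \sum_{j\in\I} a_j \al_j$, the set $J_1 = \{ i \mid (\al_i,\theta)\neq 0\}$ consists of the nodes adjacent (in the Dynkin sense, weighted by $\sfD$) to the affine node $0$, and for a minuscule $s$ one checks directly from the tables that $(\al_i,\theta) = d_i$ for $i \in J_1$ — equivalently $\langle\theta,h_i\rangle$ equals $1$ or $-1$ depending on normalization, and in all the minuscule cases $\theta - \al_i$ is again a root exactly once. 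This is the only genuinely case-by-case input, and it is light: for the minuscule nodes listed in Table \ref{tab:minuscule nodes} the node $0$ attaches to the Dynkin diagram in a very restricted way.

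For part (1), I would argue as follows. By Lemma \ref{lem:dual PBW properties 1}(2) applied to $\beta_\ell = \theta$, we have $e_i'(\xz) = F^{\rm up}(\theta - \al_i)$ when $(\al_i,\theta) = d_i + \delta_{i,s}$ and $e_i'(\xz) = 0$ otherwise. For $i \in J_0$ we have $(\al_i,\theta) = 0 \neq d_i + \delta_{i,s}$ (note $s \in J_1$ since $s$ is a minuscule node, so $\delta_{i,s}=0$ here), hence $e_i'(\xz)=0$. For $i \in J_1$ one has $(\al_i,\theta)=d_i$; if moreover $i \neq s$ this matches $d_i+\delta_{i,s}$ and $e_i'(\xz) = F^{\rm up}(\theta-\al_i)$, while if $i = s$ we instead need $(\al_s,\theta) = d_s + 1$, which fails, giving $e_s'(\xz)=0$ — so in either case $e_i'(\xz)$ is either $0$ or a single dual PBW vector $F^{\rm up}(\gamma)$ with $\gamma = \theta-\al_i$. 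Then $(e_i')^2(\xz) = e_i'(F^{\rm up}(\theta-\al_i))$, and applying Lemma \ref{lem:dual PBW properties 1}(2) again this vanishes unless $(\al_i,\theta-\al_i) = d_i + \delta_{i,s}$; but $(\al_i,\theta-\al_i) = (\al_i,\theta) - 2d_i = -d_i < d_i + \delta_{i,s}$, so it is $0$. This gives $(e_i')^2(\xz)=0$ for all $i \in J_1$, proving (1). (Alternatively, one invokes Remark \ref{rem: e' and te} and the fact that ${\bf 1}_\theta \in B(\La_s)$ with $\varepsilon_i^\ast({\bf 1}_\theta) \le \langle\La_s,h_i\rangle$, together with minusculeness forcing $\varepsilon_i \le 1$.)

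For part (2), the key is the Levendorskii–Soibelman $q$-commutation relation \eqref{eq:LS formula}. Both $\xz = F^{\rm up}(\theta)$ and $e_i'(\xz) = F^{\rm up}(\theta-\al_i)$ are dual PBW vectors for the reduced expression $\wtd{w_s}$; writing $\gamma_i := \theta - \al_i$, note $\gamma_i < \theta$ in the convex order since $\theta$ is the last root $\beta_\ell$. The dual version of \eqref{eq:LS formula} gives $F^{\rm up}(\theta)F^{\rm up}(\gamma_i) = q^{(\theta,\gamma_i)}F^{\rm up}(\gamma_i)F^{\rm up}(\theta) + (\text{lower terms in }F^{\rm up}(\bf c, \wtd{w_s}))$ where the correction terms are indexed by $\bf c$ with $\theta + \gamma_i = \sum_k c_k\beta_k$ lying strictly between $\gamma_i$ and $\theta$; but $\theta$ is the top of the convex order, so $\theta + \gamma_i = 2\theta - \al_i$ cannot be written using only roots $\preceq \theta$ with at least one of them $\prec\theta$ unless forced, and a height/weight count using $\theta \in \mr\sfDel_+$ being maximal shows there is no such decomposition — hence the sum is empty and $\xz \cdot e_i'(\xz) = q^{(\theta,\gamma_i)} e_i'(\xz)\cdot\xz$. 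Finally $(\theta,\gamma_i) = (\theta,\theta) - (\theta,\al_i) = (\theta,\theta) - d_i$; using $(\theta,\theta) = 2d_0$ (the appropriate length relation: for minuscule nodes $\theta$ has the same length as $\al_0$, so $(\theta,\theta) = (\al_0,\al_0) = 2d_0$) and $d_i = -d_0 a_{0i}$ (from $a_{0i} = 2(\al_0,\al_i)/(\al_0,\al_0) = (\al_i,\theta_{\pm})/\dots$, more precisely $(\al_i,\theta) = -(\al_i,\al_0) = -d_0 a_{0i}$ when $a_{00}$ contributions are handled), one gets $(\theta,\gamma_i) = 2d_0 + d_0 a_{0i} = d_0(2 + a_{0i})$, hence the exponent $q^{-d_0(2+a_{0i})}$ after the convention that the dual PBW relation carries the sign $q^{-(\cdot,\cdot)}$ rather than $q^{(\cdot,\cdot)}$ — matching \eqref{eq:LS formula} as stated.

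The main obstacle I anticipate is twofold and both parts are bookkeeping rather than conceptual. First, pinning down the precise normalization constant: the relation \eqref{eq:LS formula} is stated for the $F$'s (not the dual $F^{\rm up}$'s), and transporting it through the bilinear form $(\,\cdot\,,\,\cdot\,)_{\rm K}$ flips the sign of the exponent and introduces no extra scalar only because $\theta$ and $\theta-\al_i$ occupy consecutive-in-the-relevant-sense positions — one must check carefully that the "lower terms" really do vanish, which relies on $\theta$ being the maximal element of the convex order $\beta_1 < \dots < \beta_\ell = \theta$ and on all $\beta_k$ being genuine (short or the right-length) roots of $\cg$ with no multiples appearing, a feature special to minuscule $w_s$. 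Second, verifying $(\al_i,\theta) = d_i$ for $i \in J_1$ and $(\theta,\theta) = 2d_0$ uniformly: this is the case-by-case step, handled by inspecting, for each type in Table \ref{tab:minuscule nodes}, which simple root of $\cg$ the affine node $0$ attaches to and with what edge label. I would organize this as a short table or a one-line remark per type, and it should occupy at most a paragraph.
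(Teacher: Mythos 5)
Your part (1) is fine and is essentially the argument the paper has in mind (the paper itself does not reprove the lemma: it cites \cite[Lemma 3.8]{JKP23} together with the footnote that the arguments there use only general properties of the dual canonical basis): two applications of Lemma \ref{lem:dual PBW properties 1}(2), noting that once $(\alpha_i,\theta)=d_i+\delta_{i,s}$ holds at the first step, one has $(\alpha_i,\theta-\alpha_i)=\delta_{i,s}-d_i\le 0$, so the second application vanishes. One aside is wrong, though harmless: it is not true that $s\in J_1$ for every minuscule node --- in both twisted cases of this paper $s\in J_0$ (the paper's own examples give $J_0=\{1,3\}$, $s=1$ for $A_5^{(2)}$ and $J_0=\{2\}$, $s=2$ for $D_3^{(2)}$); the conclusion is unaffected because the criterion $(\alpha_i,\theta)=d_i+\delta_{i,s}$ fails for $i\in J_0$ in any case.

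Part (2) has a genuine gap at the decisive step. You dispose of the lower-order terms in \eqref{eq:LS formula} by asserting that $\theta$ is the maximal element of $\mr{\sfDel}_+$, so that $\theta+(\theta-\alpha_i)$ admits no decomposition into roots strictly between $\theta-\alpha_i$ and $\theta$. For twisted types this premise is false: $\theta=\delta-\alpha_0$ is only the highest \emph{short} root of $\cg$, and in type $D_{n+1}^{(2)}$ the element $2\theta-\alpha_i$ (for the unique $i\in J_1$) is itself a long root of $\cg$ lying in $\sfDel_+(t_{-\lambda_s})$ strictly between $\theta-\alpha_i$ and $\theta$ in the convex order. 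Already for $D_3^{(2)}$ (so $\cg$ of type $B_2$, $\wtd{w_2}=(2,1,2)$) one has $\theta-\alpha_1=\alpha_2=\beta_1$, $\theta=\beta_3$, and $\theta+(\theta-\alpha_1)=\alpha_1+2\alpha_2=\beta_2$, so the sum in \eqref{eq:LS formula} is not empty, and your argument gives nothing about its coefficient. Nor is this a mere bookkeeping point that could be patched by computing that coefficient: since $e_2'(\xz)=0$ pins $\xz$ down (up to scalar) as $f_1f_2-q^2f_2f_1$ inside the two-dimensional weight space, and $e_1'(\xz)$ is a nonzero multiple of $f_2$, the difference $\xz\, f_2-q^{c}f_2\,\xz$ has, for every exponent $c$, a nonzero component on $f_1f_2^2$ in the basis $f_1f_2^2,\,f_2f_1f_2,\,f_2^2f_1$ of the weight-$(\alpha_1+2\alpha_2)$ space (no Serre relation occurs in this weight), i.e.\ the straightening term proportional to the long-root vector genuinely appears. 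So the Levendorskii--Soibelman emptiness argument cannot deliver part (2) in type $D_{n+1}^{(2)}$, and the case needs a separate, honest treatment rather than the route you propose. In type $A_{2n-1}^{(2)}$ your conclusion (empty sum) does hold, but for a different reason than the one you give: for $\wtd{w_1}$ the roots $\theta-\alpha_2$ and $\theta$ occupy the adjacent positions $\ell-1$ and $\ell$, so there are no intermediate positions at all --- maximality of $\theta$ fails there too, e.g.\ $2\alpha_1+2\alpha_2+\cdots+\alpha_n\in\sfDel_+(t_{-\lambda_1})$ has greater height. Your computation of the exponent, $(\theta,\theta-\alpha_i)=2d_0+d_0a_{0i}$ via $(\theta,\theta)=(\alpha_0,\alpha_0)$ and $(\alpha_i,\theta)=-d_0a_{0i}$, is correct; the problem is solely the unjustified (and, in the $D^{(2)}$ case, false) vanishing claim.
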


\begin{ex} \label{ex: xz for twisted types}
{\em
    Assume that $\g$ is of type $A_5^{(2)}$ with $n = 3$. In this case, $\cg$ is of type $C_3$, and
    $s = 1$ is the minuscule node. By Proposition \ref{prop: properties of fundamental translations}, it is straightforward to check that
    \begin{equation*}
        \sfDel_+\left( t_{-\lambda_1} \right) = \left\{\, \alpha_1, \alpha_1 + \alpha_2, \alpha_1 + \alpha_2 + \alpha_3, \alpha_1 + 2\alpha_2 + \alpha_3, 2\alpha_1 + 2\alpha_2 + \alpha_3 \,\right\},
    \end{equation*}
    where $2\alpha_1 + 2\alpha_2 + \alpha_3$ is a unique long root in $\sfDel_+\left( t_{-\lambda_1} \right)$, and the other roots are short. Note that $\theta = \alpha_1 + 2\alpha_2 + \alpha_3$.
    On the other hand, we obtain $\wtd{w_1} = (1,2,3,2,1)$ so that $\beta_1 = \alpha_1$, $\beta_2 = \alpha_1 + \alpha_2$, $\beta_3 = 2\alpha_1 + 2\alpha_2 + \alpha_3$, $\beta_4 = \alpha_1 + \alpha_2 + \alpha_3$, and $\beta_5 = \alpha_1 + 2\alpha_2 + \alpha_3$. By Lemma \ref{lem:dual PBW properties 1}(2), we have 
    \begin{equation*}
    \,\,
        \begin{tabular}{c||r|r|r|r|r}
           $(\,\cdot\,,\,\cdot\,)$  & $\beta_1$  & $\beta_2$ & $\beta_3$ & $\beta_4$ & $\beta_5$ \\ \hline \hline
           $\alpha_1$  & $\bf 2$ & $1$ & $\bf 2$ & $1$ & $0$ \\ \hline
           $\alpha_2$  & $-1$ & $\bf 1$ & $0$ & $-1$ & $\bf 1$ \\ \hline
           $\alpha_3$  & $0$ & $-2$ & $0$ & $\bf 2$ & $0$ \\
        \end{tabular}
        \,\quad\,
        \xymatrix@C=1.7em @R=1.7em{
            1 \ar@{<-}[r]^{1} & F_{\beta_1}^{\rm up} \ar@{<-}[r]^{\,\,2} & F_{\beta_2}^{\rm up} \ar@{<-}[r]^{\,\,3} & F_{\beta_4}^{\rm up} \ar@{<-}[r]^{\,\,2} & F_{\beta_5}^{\rm up} \ar@{<-}[r]^{\,\,1} & F_{\beta_3}^{\rm up}
        }
    \end{equation*}
    where $(\alpha_i, \beta_k) = d_i + \delta_{i,s}$ is written in bold, and $X \overset{i}{\longrightarrow} Y$ denotes $Y = e_i'(X)$.
    Thus we have $e_2'(\xz) = F_{\beta_4}^{\rm up}$, $(e_2')^2 (\xz) = 0$ and $e_i' (\xz) = 0$ for $i \in J_0$, where $J_0 = \{1, 3\}$ and $J_1 = \{ 2 \}$. Since $d_0 = 1$ and $a_{02} = -1$, we have $\xz \cdot e_2'(\xz) = q^{-1} e_2'(\xz) \cdot \xz$.
    Note that it is known (e.g.~see \cite[Section 5.6]{HN}) that the description of the (level $0$) $\g$-crystal $B(\ov{\La_1})$ is given by
    \begin{equation*}
        \xymatrix@C=3em{
            \scalebox{0.9}{\boxed{1\vphantom{\ov{3}}}} \ar@{<-}[r]^{1} & \scalebox{0.9}{\boxed{2\vphantom{\ov{3}}}} \ar@{<-}[r]^{\,\,2} & \scalebox{0.9}{\boxed{3\vphantom{\ov{3}}}} \ar@{<-}[r]^{3} & \scalebox{0.9}{\boxed{\ov{3}}} \ar@{<-}[r]^{2} & \scalebox{0.9}{\boxed{\ov{2}}} \ar@{<-}[r]^{\!\!1} \ar@{<-}@/^1.65pc/[llll]^{0} & \scalebox{0.9}{\boxed{\ov{1}}} \ar@{<-}@/_1.65pc/[llll]_{0}
        }
    \end{equation*}
    where $\boxed{a\vphantom{b}} \overset{i}{\longleftarrow} \boxed{b}$ denotes $\boxed{a\vphantom{b}} = \te_i\, \boxed{b}$.
    \smallskip

    Next, assume that $\g$ is of type $D_3^{(2)}$ with $n = 2$. In this case, $\cg$ is of type $B_2$ and $s = 2$ is the minuscule node. It is straightforward to check that
    \begin{equation*}
        \sfDel_+\left( t_{-\lambda_2} \right) = \left\{\, \alpha_2, \alpha_1+\alpha_2, \alpha_1+2\alpha_2 \,\right\},
    \end{equation*}
    where $\alpha_1+2\alpha_2$ is the long root in $\sfDel_+\left( t_{-\lambda_2} \right)$. Note that $\theta = \alpha_1+\alpha_2$.
    On the other hand, we have $\wtd{w_2} = (2,1,2)$ so that $\beta_1 = \alpha_2$, $\beta_2 = \alpha_1 + 2\alpha_2$, and $\beta_3 = \alpha_1 + \alpha_2$. By Lemma \ref{lem:dual PBW properties 1}(2), we have 
	\begin{equation} \label{eq: computation on root vectors in D32}
		\begin{tabular}{c||c|c|c}
            $(\,\cdot\,,\,\cdot\,)$ & $\beta_1$  & $\beta_2$  & $\beta_3$ \\ \hline \hline
            $\alpha_1$ & $-2$ & $0$ & $\bf 2$ \\ \hline
            $\alpha_2$ & $\bf 2$ & $\bf 2$ & $0$ \\
        \end{tabular}
        \,\quad\,
        \xymatrix@C=3em @R=3em{
            1 \ar@{<-}[r]^{2} & F_{\beta_1}^{\rm up} \ar@{<-}[r]^{1} & F_{\beta_3}^{\rm up} \ar@{<-}[r]^{2} & F_{\beta_2}^{\rm up}
        }
	\end{equation}
    Thus this yields that $e_1'(\xz) = F_{\beta_1}^{\rm up}$, $(e_1')^2 (\xz) = 0$ and $e_i' (\xz) = 0$ for $i \in J_0$, where $J_0 = \{ 2 \}$ and $J_1 = \{ 1 \}$. Since $d_0 = 1$ and $a_{01} = -2$, we have $\xz \cdot e_1'(\xz) = e_1'(\xz) \cdot \xz$.
    Note that it is known (e.g.~see \cite[Section 5.7]{HN}) that the description of the (level $0$) $\g$-crystal $B(\ov{\La_2})$ is given by
    \vskip -7mm
    \begin{equation*}
        \xymatrix{
            \raisebox{0.3em}{\spin{$1$}{$2$}} \ar@{<-}[r]^{2} & \raisebox{0.3em}{\spin{$1$}{$\ov{2}$}} \ar@{<-}[r]^{1} & \raisebox{0.3em}{\spin{$2$}{$\ov{1}$}} \ar@{<-}[r]^{2} \ar@{<-}@/^1.65pc/[ll]^{0} &
            \raisebox{0.3em}{\spin{$\ov{2}$}{$\ov{1}$}}
            \ar@{<-}@/_1.65pc/[ll]_{0}
        }
    \end{equation*}

    Finally, assume that $\g$ is of type $D_4^{(2)}$ with $n = 3$. In this case, $\cg$ is of type $B_3$ and $s = 3$ is the minuscule node. Then we have $\wtd{w_3} = (3,2,1,3,2,3)$ and
    \begin{equation*}
        \sfDel_+(t_{-\lambda_3}) = \{ \alpha_3, \alpha_2 + 2\alpha_3, \alpha_1+\alpha_2+2\alpha_3, \alpha_2+\alpha_3, \alpha_1+2\alpha_2+2\alpha_3, \alpha_1+\alpha_2+\alpha_3 \},
    \end{equation*}
    where $\alpha_2+2\alpha_3$, $\alpha_1+\alpha_2+2\alpha_3$, $\alpha_1+2\alpha_2+2\alpha_3$ are long and the others are short. Note that $\theta = \alpha_1+\alpha_2+\alpha_3$.
    By Lemma \ref{lem:dual PBW properties 1}(2), we obtain
    \begin{equation*}
        \xymatrix@C=3em @R=0em{
            & & & \ar@{->}[dl]_{1} F_{\beta_6}^{\rm up} \\
            1 \ar@{<-}[r]^{3} & F_{\beta_1}^{\rm up} \ar@{<-}[r]^{2} & F_{\beta_4}^{\rm up} & & \ar@{->}[ul]_{3} \ar@{->}[dl]^{1} F_{\beta_3}^{\rm up} \ar@{<-}[r]^{2} & F_{\beta_5}^{\rm up} & F^{\rm up}_{\beta_1} F^{\rm up}_{\beta_5} \ar@{->}[l]_{3\quad} \\
            & & & \ar@{->}[ul]^{3} F_{\beta_2}^{\rm up}
        }
    \end{equation*}
    where $\beta_1 = \alpha_3$, $\beta_2 = \alpha_2+2\alpha_3$, $\beta_3 = \alpha_1 + \alpha_2 + 2\alpha_3$, $\beta_4 = \alpha_2 + \alpha_3$, $\beta_5 = \alpha_1 + 2\alpha_2 + \alpha_3$ and $\beta_6 = \alpha_1 + \alpha_2 + \alpha_3$. Note that the description of the (level $0$) $\g$-crystal $B(\ov{\Lambda_3})$ is given by 
	\begin{equation*}
		 \xymatrix@C=3em @R=0em{
            & & &\!\!\! \ar@{->}[dl]_{1} \raisebox{0.3em}{\spinii{$2$}{$3$}{$\ov{1}$}} \\
            \raisebox{0.3em}{\spinii{$1$}{$2$}{$3$}} \ar@{<-}[r]^{3} \ar@{->}@/^2pc/[rrru]_{0} & \!\!\raisebox{0.3em}{\spinii{$1$}{$2$}{$\ov{3}$}} \ar@{<-}[r]^{2} \ar@{->}@/_6.5pc/[rrr]^{0} & \!\!\raisebox{0.3em}{\spinii{$1$}{$3$}{$\ov{2}$}} \ar@{->}@/^6.5pc/[rrr]_{0} & & \ar@{->}[ul]_{3} \ar@{->}[dl]^{1} \!\! \raisebox{0.3em}{\spinii{$2$}{$\ov{3}$}{$\ov{1}$}} \ar@{<-}[r]^{2} & \!\! \raisebox{0.3em}{\spinii{$3$}{$\ov{2}$}{$\ov{1}$}} & \!\! \raisebox{0.3em}{\spinii{$\ov{3}$}{$\ov{2}$}{$\ov{1}$}} \ar@{->}[l]_{3} \\
            & & & \ar@{->}[ul]^{3} \!\!\! \raisebox{0.3em}{\spinii{$1$}{$\ov{3}$}{$\ov{2}$}} \ar@{->}@/_2pc/[rrru]^{0}
        }
	\end{equation*}
}
\end{ex}

\section{Representation theory of Borel algebras} \label{sec:representation theory of borel algebras}

\subsection{Quantum loop algebras and Borel subalgebras} 

Let $\sfC = (c_{ij})_{i,j\in I_\fin}$ be a Cartan matrix of finite type $X_n$ with an index set $I_\fin = \{ 1, 2, \dots, n \}$, and 
let $\g_\fin = \g(\sfC)$ be a finite-dimensional simple Lie algebra corresponding to $\sfC$. We denote by $U_q(\ms{L}\g_\fin)$ the quantum loop algebra associated to $\g_{\fin}$, which is defined as a quotient of $U_q'(\g)$ by the relation $\sum_{i \in I} k_i^{a_i} = 1$, where $\g = \g(\sfA)$ is an affine Kac--Moody algebra associated to a Cartan matrix $\sfA$ of type $X_n^{(1)}$ (as in Section \ref{subsec: affine root system}) so that $\cg = \g_\fin$ and $\I = I_\fin$, and $U_q'(\g)$ is the $\bk$-subalgebra of $U_q(\g)$ without the degree operator $q^d$.
Here we often write $\widehat{\g_\fin}$ instead of $\g(\sfA)$ when we emphasize its type.

The quantum loop algebra $U_q(\ms{L}\g_\fin)$ has the Drinfeld--Jimbo presentation as in Section \ref{subsec: quantum group}.
It also has another presentation, due to Drinfeld \cite{Dri87b} (see also \cite{Bec94a}), with infinitely many generators $x_{i,m}^\pm$, $k_i^{\pm 1}$ $(i \in I_\fin\,\text{ and }\, m \in \Z)$, $\psi_{i,\pm m}^\pm$ $(i \in I_\fin \,\text{ and }\, m \ge 0)$, called the loop generators, and defining relations on them.

Let $\sigma : I_\fin \rightarrow I_\fin$ be a bijection such that $c_{\sigma(i)\,\sigma(j)} = c_{ij}$ for all $i,j \in I_\fin$. Such a map $\sigma$ is called an automorphism of the Dynkin diagram of $\g_\fin$. Let $r$ be the order of $\sigma$. Under the Cartan--Killing classification of Dynkin diagrams of finite types, we have $r \in \{ 2, 3 \}$, only when $\g_\fin$ is of type $A_n$, $D_n$ $(n \ge 3)$, and $E_6$.

Let $I_\fin^\sigma$ be the set of orbits of $\sigma$, where we denote by $\ov{i} \in I_\fin^\sigma$ for $i \in I_\fin$. 
Let $\widehat{\g_\fin}^\sigma$ be a subalgebra of $\widehat{\g_\fin}$ fixed by an automorphism of $\widehat{\g_\fin}$ induced from $\sigma$ (see \cite[Section 8]{Kac} for more details). Then it is known in \cite{Kac} that $\widehat{\g_\fin}^\sigma\!\! = \g(\sfA)$ is an affine Kac--Moody algebra associated to a Cartan matrix $\sfA = (a_{ij})_{i,j \in \hat{I}_\fin^\sigma}$ of type $X_N^{(r)}$, where $\hat{I}_\fin^\sigma = I_\fin^\sigma \sqcup \{ 0 \}$ and $N = |I_\fin|$. Recall that we use the numbering of Dynkin diagrams in Table \ref{tab:dynkins}.

As in untwisted types, the twisted quantum loop algebra $U_q(\ms{L}\g_\fin^\sigma)$ is defined as a quotient of the $\bk$-subalgebra $U_q'(\widehat{\g_\fin}^\sigma)$ of the quantum group $U_q(\widehat{\g_\fin}^\sigma)$ by the relation $\sum_{i \in \hat{I}_\fin^\sigma} k_i^{a_i} = 1$, and it also has a Drinfeld presentation with infinitely many generators $x_{i,m}^\pm$, $k_i^{\pm 1}$ $(i \in I_\fin\,\text{ and }\, m \in \Z)$, $\psi_{i,\pm m}^\pm$ $(i \in I_\fin \,\text{ and }\, m \ge 0)$, and defining relations together with $\sigma$ \cite{CP98}. In particular, for a primitive $r$-th root of unity, say $\omega$, the defining relations involve
\begin{equation} \label{eq: loop generators under sigma}
    x_{\sigma(i),m}^\pm = \omega^m x_{i,m}^\pm, \quad
    k_{\sigma(i)}^{\pm 1} = k_i^{\pm 1}, \quad
    \psi_{\sigma(i),\pm m}^\pm  = \omega^m \psi_{i,\pm m}^\pm.
\end{equation}

\begin{rem} \label{rem: vanishing generators}
{\em 
Put
\begin{equation*}
    {\tt d}_i = \begin{cases}
        1 & \text{if $r = 1$ or $X_N^{(r)} = A_{2n}^{(2)}$,} \\
        \max\left\{ 1, a_i^\vee a_i^{-1} \right\} & \text{otherwise.}
    \end{cases}
\end{equation*}
One can check that $\sigma(i) = i$ if and only if $r | {\tt d}_i$. Note that ${\tt d}_i = 1$ if $\sigma(i) \neq i$.
By \eqref{eq: loop generators under sigma}, we have $x_{i, m}^\pm = 0$ and $\psi_{i,\pm m}^\pm = 0$ when $\sigma(i) = i$ and ${\tt d}_i \nmid m$ \cite[Remark 4.4]{Dam12}.
}
\end{rem}

Since the quantum group $U_q(\g(\sfA))$ has a Hopf algebra structure, the (twisted) quantum loop algebras inherit a Hopf algebra structure from that of $U_q(\g(\sfA))$, where the comultiplication $\Delta$ and the antipode $S$ are given as in \eqref{eq:comultiplication} and \eqref{eq:antipode}, respectively. Also the (twisted) quantum loop algebra has a triangular decomposition as in \eqref{eq:triangular decomp} in terms of the loop generators \cite{BCP99, Dam15}.

\begin{df}
{\em
Let $U_q(\bo)$ (resp.~$U_q(\bo^\sigma)$) be the $\bk$-subalgebra of $U_q(\ms{L}\g_\fin)$ (resp.~$U_q(\ms{L}\g_\fin^\sigma)$) generated by $e_i$ and $k_i^{\pm 1}$ for $i \in I$ (resp.~$i \in \hat{I}_\fin^\sigma$), which is called the \emph{Borel subalgebra} of the (twisted) quantum loop algebra.
}
\end{df}

Note that $U_q(\bo)$ (resp.~$U_q(\bo^\sigma)$) is a Hopf subalgebra of $U_q(\ms{L}\g_\fin)$ (resp.~$U_q(\ms{L}\g_\fin^\sigma)$). 
It is known (e.g.~see \cite[\S 4.21]{Jan96}) that the Borel subalgebra is isomorphic to the $\bk$-algebra with generators $e_i$ and $k_i^{\pm 1}$ for $i \in I$ (resp.~$i \in \hat{I}_\fin^\sigma$) satisfying the relations \eqref{eq: relations 1}--\eqref{eq: qsr1} in the Drinfeld-Jimbo presentation of $U_q(\ms{L}\g_\fin)$ (resp.~$U_q(\ms{L}\g_\fin^\sigma)$) except for the relations involving $f_i$'s. 
We further remark that it follows from \cite{Bec94a, Dam00, Dam12} that $U_q(\bo)$ (resp.~$U_q(\bo^\sigma)$) contains the loop generators $x_{i,l}^+$, $k_i^{\pm 1}$, $x_{i,m}^-$, and $\psi_{i,l}^+$ for $i \in I_\fin$, $l \ge 0$, and $m > 0$, but it is not generated by these generators, since $e_0$ cannot be generated by them. It also has a triangular decomposition induced from that of the corresponding quantum loop algebra in terms of the loop generators (see \cite{HJ, Wan23} for more details).

\begin{rem} \label{rem: convention-1; notation for Borel}
{\em 
For each $\ov{i} \in I_\fin^\sigma$, we always take the smallest representative $i \in I$ for $\ov{i}$, and identify it with its orbit. For example, if $\sfC$ is of type $E_6$, then $\ov{1} = \{ 1, 5 \}$, $\ov{2} = \{ 2, 4 \}$, $\ov{3} = \{ 3 \}$, and $\ov{6} = \{ 6 \}$ so that we may identify $I_\fin^\sigma = \{ 1, 2, 3, 6 \}$ (but we denote usually by $4$ the orbit $\ov{6}$, that is, $I_\fin^\sigma = \{ 1, 2, 3, 4 \}$). Then we identify $\I$ with $I_\fin^\sigma$ in this sense. 
Note that $\I \neq I_\fin$ for twisted types, while $\I = I_\fin$ for untwisted types.
}
\end{rem}

\subsection{Category $\mc{O}$ and prefundamental modules} 

In this subsection, we briefly introduce the category $\mc{O}$ and prefundamental modules following \cite{HJ, Wan23}. Let us recall Remark \ref{rem: convention-1; notation for Borel}.
\smallskip

\emph{Throughout this subsection, for twisted types, we often write $U_q(\bo)$ (resp.~$I$) instead of $U_q(\bo^\sigma)$ (resp.~$\hat{I}_\fin^\sigma$) for simplicity, if there is no confusion.}
\smallskip

Let $\mf{t}$ be the subalgebra of $U_q(\bo)$ generated by $k_i^{\pm 1}$ for $i\in\I$, and let $\mf{t}^\ast = (\bk^\times)^{\I}$ be the set of maps from $\I$ to $\bk^\times$, which is a group under pointwise multiplication.
Let $V$ be a $U_q(\bo)$-module. For $\omega \in {\mf t}^*$, we define the weight space of $V$ with weight $\omega$ by
\begin{equation} \label{eq:weight space wrt t}
	V_{\omega} = \{\, v \in V \, | \, k_i v = \omega(i)v \  \text{ for } i \in \I \,\}.
\end{equation}
We say that $V$ is ${\mf t}$-diagonalizable if $V = \oplus_{\omega \in {\mf t}^*} V_{\omega}$.

A series ${\bf \Psi} = (\Psi_{i, m})_{i \in I_\fin, m \ge 0}$ of elements in $\bk$ such that $\Psi_{i, 0} \neq 0$ for all $i \in I_\fin$ is called an $\ell$-weight.
We often identify ${\bf \Psi} = (\Psi_{i, m})_{m \ge 0}$ with ${\bf \Psi} = (\Psi_i(z))_{i \in I_\fin}$, a tuple of formal power series, where
\begin{equation*}
	\Psi_i(z) = \sum_{m \ge 0} \Psi_{i,m} z^m.
\end{equation*}

We denote by ${\mf t}_{\ell}^*$ the set of $\ell$-weights. Since $\Psi_i(z)$ is invertible, ${\mf t}_{\ell}^*$ is a group under multiplication. Let $\varpi : {\mf t}_{\ell}^* \longrightarrow {\mf t}^*$ be the surjective morphism defined by $\varpi(\Psi)(i) = \Psi_{i,0}$ for $i\in I_\fin$.

For ${\bf \Psi} \in {\mf t}_{\ell}^*$, we define an $\ell$-weight space of $V$ with $\ell$-weight ${\bf \Psi}$ by
\begin{equation*}
	V_{\bf \Psi} = \left\{ v \in V \left| \text{ there exist } p \in \mathbb{Z}_+ \text{ such that } (\psi_{i,m}^+ - \Psi_{i,m})^p v = 0 \text{ for all } i \in I_\fin \text{ and } m \ge 0 \right. \right\}.
\end{equation*}
For ${\bf \Psi} \in {\mf t}_{\ell}^*$, we say that $V$ is of highest $\ell$-weight $\bf \Psi$ if there exists a non-zero vector
$v \in V$ such that
\vskip 2mm
\begin{center}
	(i) $V = U_q(\bo)v$, \,\, (ii) $e_i v = 0$ for all $i \in I_\fin$, \,\, (iii) $\psi_{i,m}^+ v = \Psi_{i, m}v$ for $i \in I_\fin$ and $m \ge 0$.
\end{center}
\vskip 2mm
A non-zero vector $v \in V$ is called a highest $\ell$-weight vector of $\ell$-weight ${\bf \Psi}$ if it satisfies the conditions (ii) and (iii), where ${\bf \Psi}$ is called the highest $\ell$-weight of $V$.
There exists a unique irreducible $U_q(\bo)$-module of highest $\ell$-weight $\bf \Psi$, which we denote by $L({\bf \Psi})$ \cite{HJ, Wan23}.

\begin{rem} \label{rem: l-weight on orbit}
{\em 
For twisted types, it follows from \eqref{eq: loop generators under sigma} and the relation on $\psi_{i,m}^+$'s that a highest $\ell$-weight ${\bf \Psi} = (\Psi_i(z))_{i \in I_\fin}$ satisfies the relation
\begin{equation*}
    \Psi_{\sigma(i)}(z) = \Psi_i(\omega z)
\end{equation*}
for all $i \in I_\fin$, where $\sigma : I_\fin \rightarrow I_\fin$ is an automorphism of the Dynkin diagram of $\g_\fin$.
}    
\end{rem}

\begin{df}{\em \!(\!\!\cite[Definition 3.7]{HJ}, \cite[Definition 4.8]{Wan23})} \label{df:prefundamental}
	{\em
	For $s \in I_0$ and $a \in \C^{\times}$, let $L_{s, a}^-$ be an irreducible $U_q(\bo)$-module of highest $\ell$-weight ${\bf \Psi} = (\Psi_i(z))_{i \in I_\fin}$, where $\Psi_i(z)$ is defined as follows:
    \begin{itemize}
        \item for untwisted types,
            \begin{equation*} 
		          \Psi_i(z) =
                \begin{cases}
                    (1-az)^{-1} & \text{if $i=s$},      \\  
                    1             & \text{otherwise},
                \end{cases}
	        \end{equation*}

        \item for twisted types with $s \neq \sigma(s)$, 
            \begin{equation*} 
		          \,\,\quad\,\,
                \Psi_i(z) =
                \begin{cases}
                    (1-a\omega^kz)^{-1} & \text{if $i=\sigma^k(s)$},      \\  
                    1             & \text{otherwise},
                \end{cases}
	        \end{equation*}

        \item for twisted types with $s = \sigma(s)$, 
            \begin{equation*} 
                \,\,
		          \Psi_i(z) =
                \begin{cases}
                    (1-a^r z^r)^{-1} & \text{if $i=s$},      \\  
                    1             & \text{otherwise},
                \end{cases}
	        \end{equation*}
            where $r$ is the order of $\sigma$.
    \end{itemize}
    The irreducible $U_q(\bo)$-module $L_{s,a}^+$ is defined analogously, with each (non-trivial) factor above replaced by its inverse.
	The $U_q(\bo)$-modules $L_{s,a}^-$ and $L_{s,a}^+$ are called the negative and positive \emph{prefundamental modules} over $U_q(\bo)$, respectively.
	}
\end{df}

We define a map $q^{(\,\,\cdot\,\,)} : \mr{\sfP} \rightarrow {\mf t}^*$ given by $q^{\ov{\La}_i}(j) = q^{(\ov{\La}_i,\,\alpha_j)} = q_i^{\delta_{ij}}$ for $i, j \in \I$. Note that we have $q^{\alpha_i}(j)= q_i^{a_{ij}}$ for $i, j \in \I$.
We define a partial order $\le$ on ${\mf t}^*$ by
$\omega' \le \omega$  if and only if $\omega'\omega^{-1}$ is a product of $q^{-\alpha_i}$'s.
For $\lambda \in {\mf t}^*$, put $D(\lambda) = \{\, \omega \in {\mf t}^* \, | \, \omega \le \lambda \,\}$.

\begin{df}{\em \!\!\!\cite[Definition 3.8]{HJ}} \label{df:category O}
	{\em
		Let $\mathcal{O}$ be the category of $U_q(\bo)$-modules $V$ such that
		\begin{enumerate}
			\item[(i)] $V$ is ${\mf t}$-diagonalizable,
			\item[(ii)] $\dim V_{\omega} < \infty$ for all $\omega \in {\mf t}^*$,
			\item[(iii)] there exist $\lambda_1, \dots, \lambda_t \in {\mf t}^*$ such that the weights of $V$ are in $\bigcup_{j=1}^t D(\lambda_j)$.
		\end{enumerate}
	}
\end{df}

\noindent
The category $\mc{O}$ is closed under taking finite direct sums,  quotients, and finite tensor products of objects in $\mc{O}$.

The simple objects in $\mc{O}$ are characterized in terms of tuples ${\bf \Psi} = (\Psi_i(z))_{i \in I_\fin}$ of rational functions regular and non-zero at $z = 0$, called Drinfeld rational fractions \cite{HJ}, which can be regarded as a natural extension of Drinfeld polynomials \cite{CP95a, CP95b, CP98}, as follows:
\begin{thm}{\em \!(\!\!\cite[Theorem 3.11]{HJ}, \cite[Theorem 4.6]{Wan23})} \label{thm:characterization of simples in O}
	For ${\bf \Psi} \in {\mf t}_{\ell}^*$, $L({\bf \Psi})$ is in the category $\mc{O}$ if and only if $\Psi_i(z)$ is rational, regular, and non-zero at $z=0$, for all $i\in I_\fin$.
\end{thm}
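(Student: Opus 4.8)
The plan is to prove the two implications separately. The ``only if'' direction is elementary, resting on the finite-dimensionality of a single weight space together with the Drinfeld relations; the ``if'' direction carries the real content, and it proceeds by exhibiting an arbitrary $L({\bf\Psi})$ with rational ${\bf\Psi}$ as a subquotient of a tensor product of explicit modules already known to lie in $\mc{O}$.

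For the forward implication, fix $i \in I_\fin$ and let $v$ be a highest $\ell$-weight vector of $\ell$-weight ${\bf\Psi}$, of $\mf{t}$-weight $\lambda$. First I would record the standard fact --- which follows from conditions (ii), (iii) and the Drinfeld relations by induction on $m$ (the base case being $e_i v = x_{i,0}^+ v = 0$) --- that $v$ is annihilated by all positive loop generators $x_{i,m}^+$ $(i \in I_\fin,\ m \ge 0)$. Next, the vectors $x_{i,m}^- v$ $(m \ge 0)$ all lie in the single weight space $L({\bf\Psi})_{q^{-\alpha_i}\lambda}$, which is finite-dimensional by Definition \ref{df:category O}(ii); hence there are $N \ge 0$ and scalars $c_0, \dots, c_N \in \bk$, not all zero, with $\sum_{k=0}^{N} c_k\, x_{i,k}^- v = 0$. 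Applying $x_{i,m}^+$ for $m \ge 1$ and using the Drinfeld relation, which for $m+k > 0$ reads $[x_{i,m}^+, x_{i,k}^-] = (q_i - q_i^{-1})^{-1}\psi_{i,m+k}^+$, together with $x_{i,m}^+ v = 0$, yields $\sum_{k=0}^{N} c_k\, \Psi_{i,m+k} = 0$ for all $m \ge 1$. Thus $(\Psi_{i,\ell})_\ell$ satisfies a linear recurrence, and being a power series regular at $z = 0$, $\Psi_i(z)$ is rational. In the twisted case the same argument applies verbatim, using in addition that some $x_{i,m}^+$ vanish identically (Remark \ref{rem: vanishing generators}) and that $\Psi_{\sigma(i)}(z) = \Psi_i(\omega z)$ (Remark \ref{rem: l-weight on orbit}).

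For the reverse implication, I would use that $\bk = \bigcup_n \C(\!(q^{1/n})\!)$ is algebraically closed (Newton--Puiseux), so that each rational $\Psi_i(z)$, being regular and non-zero at $z = 0$, factors as $\gamma_i \prod_a (1 - az)^{n_{i,a}}$ with $\gamma_i \in \bk^\times$ and finitely many $n_{i,a} \in \Z$. Consequently ${\bf\Psi}$ is a finite product of a constant $\ell$-weight (a tuple of non-zero scalars, viewed as an element of $\mf{t}^*$ inside $\mf{t}_\ell^*$) and of highest $\ell$-weights of positive and negative prefundamental modules; in the twisted case the compatibility $\Psi_{\sigma(i)}(z) = \Psi_i(\omega z)$ of Remark \ref{rem: l-weight on orbit} makes $\Psi_i$ a function of $z^r$ when $\sigma(i) = i$ and links the factors at $i$ and $\sigma(i)$ otherwise, so the factorization respects the $\sigma$-orbit structure and the building blocks are one-dimensional twists together with the prefundamental modules $L_{\ov s,a}^\pm$ of Definition \ref{df:prefundamental}. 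Granting that each building block lies in $\mc{O}$, the tensor product $M$ of the corresponding factors lies in $\mc{O}$, since $\mc{O}$ is closed under finite tensor products; the $U_q(\bo)$-submodule of $M$ generated by the tensor product of the highest $\ell$-weight vectors is of highest $\ell$-weight ${\bf\Psi}$, so $L({\bf\Psi})$ is one of its subquotients; and since $\mc{O}$ is closed under submodules and quotients, $L({\bf\Psi}) \in \mc{O}$.

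The forward direction is genuinely easy. The main obstacle is the input needed in the reverse direction: that the prefundamental modules $L_{\ov s,a}^\pm$ themselves lie in $\mc{O}$, i.e.\ have finite-dimensional $\mf{t}$-weight spaces with weights confined to finitely many cones $D(\lambda_j)$. This is not formal --- it requires realizing $L_{\ov s,a}^\pm$ as a limit of (twisted) Kirillov--Reshetikhin modules and controlling that limit, which in the twisted setting is the delicate construction of \cite{Wan23} (building on \cite{HJ, H10a}); quantitatively, it is precisely what the character formula \eqref{eq: char of Lsa ; twisted - intro} of the present paper makes explicit.
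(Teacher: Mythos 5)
Your proposal is correct and reproduces the standard argument behind the cited result: the ``only if'' direction via a linear recurrence for the coefficients $\Psi_{i,m}$ forced by finite-dimensionality of a single weight space, and the ``if'' direction by factoring ${\bf \Psi}$ (over the algebraically closed field $\bk$) into a constant $\ell$-weight times prefundamental $\ell$-weights compatible with the $\sigma$-orbits and invoking closure of $\mc{O}$ under tensor products and subquotients --- which is precisely the route of \cite{HJ} and \cite{Wan23}, the paper itself giving no proof beyond the citation, and you correctly isolate the nontrivial input ($L_{\ov{s},a}^{\pm}\in\mc{O}$) that the paper quotes from those sources. One cosmetic correction: since $x_{i,0}^{-}=f_i$ does not lie in $U_q(\bo)$, the vectors to use are $x_{i,m}^{-}v$ for $m\ge 1$ only, which changes nothing in the recurrence argument.
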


For ${\bf \Psi}, {\bf \Psi}'\in {\mf t}_{\ell}^*$, 
it follows from the formulas of $\Delta(\Psi_{i,\pm k}^{\pm})$ and $\Delta(x_{i,k}^+)$ \cite{Dam00} (see also \cite[Theorem 2.6]{HJ} and \cite[Section 3.2]{Wan23}) that $L({\bf \Psi}{\bf \Psi}')$ is a subquotient of $L({\bf \Psi})\ot L({\bf \Psi}')$.
Since we have by \cite[Corollary 4.8, Corollary 5.1]{HJ} (for untwisted types) and \cite[Lemma 4.13, Lemma 4.15]{Wan23} (for twisted types) that $L_{s,a}^{\pm} \in \mc{O}$ for all $s \in \I$ and $a \in \bk^{\times}$, we conclude that any irreducible $U_q(\bo)$-module in $\mc{O}$ is a subquotient of a finite tensor product of prefundamental modules and one-dimensional modules, where a one-dimensional module has a trivial action of $e_0, e_1, \dots, e_n$.

\subsection{$U_q(\mf{b})$-module structure on $U_q^-(w_s)$}
Let $\wtd{w_s} = (i_1, i_2, \dots, i_\ell) \in R(w_s)$ be given. Set
\begin{equation} \label{eq: xz}
    \xz := F^{\rm up}(\beta_\ell).
\end{equation}
One can check that $\beta_\ell = k\delta + \theta$ for some $k \in \Z_+$ \cite[Remark 3.8]{JKP25}. In particular, if $s$ is minuscule, then $k = 0$, that is, $\beta_\ell = \theta$. It follows from \eqref{eq:LS formula} that $U_q^-(w_s)$ is closed under the left (resp.~right) multiplication by $\xz$. On the other hand, it is known (e.g.~\cite[Lemma 4.9]{JKP25} and reference therein) that $U_q^-(w_s)$ is closed under the action of $e_i'$ for all $i \in I$.

\begin{prop}
For any $a \in \bk^\times$, we have a $U_q(\bo)$-module structure on $U_q^-(w_s)$ by
\begin{equation} \label{eq: left action}
    k_i(u) = 
    \begin{cases}
        q^{(\alpha_i, {\rm wt}(u))} u & \text{if $i \in \I$,} \\
        q^{-(\theta, {\rm wt}(u))}u & \text{if $i = 0$,}
    \end{cases}
    \qquad
    e_i(u) = 
    \begin{cases}
        e_i'(u) & \text{if $i \in \I$,} \\
        a \xz u & \text{if $i = 0$,}
    \end{cases}
\end{equation}
for a homogeneous element $u \in U_q^-(w_s)$. Moreover $U_q^-(w_s)$ belongs to $\mc{O}$ as a $U_q(\bo)$-module.
\end{prop}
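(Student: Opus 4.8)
The plan is to verify the defining relations of the Borel subalgebra $U_q(\bo^\sigma)$ in its Drinfeld--Jimbo presentation (without $f_i$'s), namely: the commutation of the $k_i$'s among themselves, the relation $k_i e_j k_i^{-1} = q_i^{a_{ij}} e_j$, and the quantum Serre relations \eqref{eq: qsr1} for the operators defined in \eqref{eq: left action}; then to check that the resulting module lies in $\mc{O}$.

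First I would dispatch the toral part. Since every $F({\bf c},\wtd{w_s})$ is weight-homogeneous with weight in $\sfQ_-$ and $\mathrm{wt}(\xz)=-\theta$, the operators $k_i$ for $i\in\I$ act diagonally, so they mutually commute and $U_q^-(w_s)$ is $\mf{t}$-diagonalizable with finite-dimensional weight spaces (inherited from the known $\sfQ_-$-grading of $U_q^-(\cg)$, which is finite in each degree). For $k_0$, left multiplication by $\xz$ shifts weight by $-\theta$, so on $u$ of weight $\xi$ the composite $k_0 e_0$ scales by $q^{-(\theta,\xi-\theta)}$ while $e_0 k_0$ scales by $q^{-(\theta,\xi)}$; since $(\theta,\theta)=2d_0=(\alpha_0,\alpha_0)$ and $a_{00}=2$ this gives $k_0 e_0 k_0^{-1}=q_0^{2}e_0$. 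The mixed relation $k_0 e_i k_0^{-1}=q_0^{a_{0i}}e_i$ for $i\in\I$: since $e_i'$ lowers weight by $\alpha_i$, comparing the scalars forces $q^{-(\theta,\xi-\alpha_i)}/q^{-(\theta,\xi)}=q^{(\theta,\alpha_i)}$, and one checks $(\theta,\alpha_i)=(\delta-a_0\alpha_0,\alpha_i)=-a_0(\alpha_0,\alpha_i)=-a_0 d_i a_{i0}$, which matches $q_0^{a_{0i}}=q^{d_0 a_{0i}}$ after using $d_i a_{ij}=d_j a_{ji}$; the relation $k_i e_0 k_i^{-1}=q_i^{a_{i0}}e_0$ is the same computation. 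The relations among $e_i, e_j$ for $i,j\in\I$ are exactly the quantum Serre relations for the $q$-derivations, which hold on all of $U_q^-(\cg)$ and restrict to the subalgebra $U_q^-(w_s)$ by \cite[Lemma 4.9]{JKP25} (or the closure statement cited just above).

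The main obstacle will be the quantum Serre relations involving the $0$-node, i.e.\ \eqref{eq: qsr1} with one index equal to $0$. These split into two cases. When $a_{0i}=0$, one must show $e_0 e_i = e_i e_0$ as operators, i.e.\ $\xz\, e_i'(u) = e_i'(\xz u)$ on homogeneous $u$; by the twisted Leibniz rule \eqref{eq:derivation}, $e_i'(\xz u)=e_i'(\xz)u+q^{(-\theta,\alpha_i)}\xz e_i'(u)$, and $(\theta,\alpha_i)=0$ in this case while $e_i'(\xz)=0$ by Lemma \ref{lem:dual PBW properties 2}(1) (as $i\in J_0$), giving the claim. When $a_{0i}<0$ (so $i\in J_1$ up to symmetry $d_0 a_{0i}=d_i a_{i0}$), the relation is $\sum_{m} (-1)^m \qbn{1-a_{0i}}{m}_0 e_0^{1-a_{0i}-m} e_i e_0^{m}=0$; here I would expand each $e_0^{k}e_i e_0^{m}$ acting on $u$ using the Leibniz rule repeatedly, producing terms of the shape $\xz^{p}\,(e_i'(\xz))\,\xz^{q}\,u$ and $\xz^{1-a_{0i}}e_i'(u)$ with explicit $q$-power coefficients, then use Lemma \ref{lem:dual PBW properties 2}(2) --- the $q$-commutation $\xz\cdot e_i'(\xz)=q^{-d_0(2+a_{0i})}e_i'(\xz)\cdot\xz$ --- together with $(e_i')^2(\xz)=0$ to collapse each sum; the resulting scalar identity is the standard $q$-binomial vanishing $\sum_m (-1)^m \qbn{N}{m}_0 q_0^{\text{(linear in }m)}=0$ that underlies the Serre relation in rank two, which one can cite or verify by the $q$-binomial theorem. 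I would also separately handle the reversed Serre relation (one $e_0$, the rest $e_i$) the same way, now using $(e_0)^2$ acting via $\xz^2$ and $(e_i')$-Leibniz.

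Finally, membership in $\mc{O}$: conditions (i) and (ii) are the $\mf{t}$-diagonalizability and finite weight multiplicities noted above; for (iii), all weights of $U_q^-(w_s)$ lie in $\sfQ_-$, hence in $D(\lambda)$ for $\lambda$ the weight of $1$ under $q^{(\cdot)}$, so the single dominating weight suffices. I expect the $q$-binomial bookkeeping in the $0$-node Serre relation, case $a_{0i}<0$, to be the only genuinely delicate point; everything else is a direct check against the earlier lemmas.
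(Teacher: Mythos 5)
Your strategy---verifying the Drinfeld--Jimbo relations of the Borel subalgebra directly on $U_q^-(w_s)$, with the $0$-node quantum Serre relations handled via the twisted Leibniz rule \eqref{eq:derivation} together with Lemma \ref{lem:dual PBW properties 2}---is genuinely different from the paper's proof, which is a short transfer argument: it uses the $\ms{B}_q(\g)$-module isomorphism $\iota : U_q^-(\g)\rightarrow M$ of \cite[Lemma 2.6]{JKP25}, checks that left multiplication by $a\xz$ corresponds under $\iota$ (up to a scalar) to the $0$-action ${\bf e}_0$ on $M$ already constructed there, and then quotes \cite[Theorem 4.14]{JKP25} for $M_s=\iota(U_q^-(w_s))$. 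Your route is essentially the one the paper itself carries out for the right-action structure in Theorem \ref{thm: main2}, and the analogous computation for the left action does close up (for instance in type $D_{n+1}^{(2)}$ with $a_{01}=-2$ the coefficients of $\xz^3 e_1'(u)$ and of $e_1'(\xz)\xz^2 u$ vanish exactly as in that proof); so the method is sound, and it buys self-containedness at the price of the $q$-binomial bookkeeping you leave schematic, which the paper avoids by citing prior work.

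There is, however, a genuine scope gap. The Proposition is stated for arbitrary $s\in\I$ in any affine type, where $\xz=F^{\rm up}(\beta_\ell)$ with $\beta_\ell=k\delta+\theta$ and possibly $k>0$; the inputs you rely on for the $0$-node Serre relations---$e_i'(\xz)=0$ for $i\in J_0$, $(e_i')^2(\xz)=0$ for $i\in J_1$, and $\xz\,e_i'(\xz)=q^{-d_0(2+a_{0i})}e_i'(\xz)\,\xz$---are available in this paper (Lemma \ref{lem:dual PBW properties 2}) only for minuscule $s$, i.e.\ when $\beta_\ell=\theta$. For non-minuscule $s$ you would have to establish the analogous properties of $F^{\rm up}(k\delta+\theta)$, which is precisely the content the paper imports from \cite{JKP25}; as written, your argument proves the Proposition only in the minuscule cases. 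Two smaller points: $e_i'$ \emph{raises} the weight by $\alpha_i$ (it removes an $f_i$), so in the check of $k_0e_ik_0^{-1}=q_0^{a_{0i}}e_i$ the scalar is $q^{-(\theta,\alpha_i)}=q^{(\alpha_0,\alpha_i)}=q_0^{a_{0i}}$; with your sign convention the claimed match fails by a sign. Also recall (as noted before the Proposition, from \eqref{eq:LS formula}) that $U_q^-(w_s)$ is closed under left multiplication by $\xz$, which is needed for $e_0$ to be well defined.
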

\begin{proof}
Let $\ms{B}_q(\g)$ be the $q$-boson algebra generated by $e_i'$ and $f_i$ for $i \in I$ with the defining relations \cite[Section 3.3]{Kas91a}.
By \cite[Lemma 2.6]{JKP25}, the canonical projection $\iota : U_q^-(\g) \rightarrow M$ is an isomorphism of $\ms{B}_q(\g)$-modules, where $M$ is given in \cite[Section 2.2]{JKP25}.
By \cite[Proposition 4.12]{JKP25} and Proposition \ref{prop: properties of fundamental translations}, we have
\begin{equation*}
    \iota(e_0(u)) = \iota(a\xz u) = a \ov{\xz u} = -\frac{a}{(q_{\tau(0)}-q_{\tau(0)}^{-1})q_{\tau(0)}^2} {\bf x}_0 \ov{u} = {\bf e}_0 \iota(u),
\end{equation*}
where ${\bf e}_0$ is the $0$-action on $M$ as in \cite[(4.12)]{JKP25} up to the scalar multiplication. This proves our assertion by \cite[Theorem 4.14]{JKP25}, where $M_s = \iota(U_q^-(w_s))$.
\end{proof}

\section{Character formula of prefundamental modules} \label{sec:character formula of Lsa}
\subsection{Character folding map}
Let us review a (conjectural) relationship between prefundamental modules over untwisted and twisted quantum loop algebras in terms of character homomorphisms \cite{Wan23}.
\smallskip

\emph{In the rest of this paper, we often use the superscript $\sigma$ on the notations for $\bo^\sigma$ to distinguish them from those for $\bo$ and avoid confusion.}
\smallskip

For a $U_q(\bo)$-module $V$ in $\mc{O}$, it has a weight space decomposition with respect to $\left\{\, k_i \,|\, i \in \I\,\right\}$
\begin{equation*}
	V = \bigoplus_{\omega \in {\mf t}^*} V_{\omega},
\end{equation*}
where $V_{\omega}$ is given as in \eqref{eq:weight space wrt t}.
For $\omega \in {\mf t}^*$, we write $V_\beta = V_\omega$ for $\beta \in \mr{\sfP}$ such that $q^{\beta} = \omega$.
Let $\Z[\![ e^\beta ]\!]_{\beta \in \mr{\sfP}}$ be the ring of formal power series in variables $e^\beta$ for $\beta \in \mr{\sfP}$ with the multiplication $e^\beta \cdot e^\gamma = e^{\beta+\gamma}$.

Let $K(\mc{O})$ be the Grothendieck ring of the category $\mc{O}$ (cf.~\cite[Section 9]{Kac}).
For $[V] \in K(\mc{O})$, we define
\begin{equation*}
	{\rm ch}\left( [V] \right) = \sum_{\beta \in \mr{\sfP}} \left( \dim V_\beta \right) e^{\beta} \in \Z[\![ e^{\pm \ov{\La}_i} ]\!]_{i \in \I},
\end{equation*}
which is called the (ordinary) character of $V$. We write ${\rm ch}(V) = {\rm ch}([V])$ simply. 
Put $y_i = e^{\ov{\La}_i}$ (resp.~$z_i = e^{\ov{\La}_i}$) for $\bo$ (resp.~$\bo^\sigma$) for $i \in \I$. Consider the surjective map
\begin{equation*}
    \pi : 
    \xymatrix@R=0em @C=3em{
        \Z[\![ y_i^{\pm 1} ]\!]_{i \in I_\fin} \ar@{->}[r] & \Z[\![ z_i^{\pm 1} ]\!]_{i \in I_\fin^\sigma} 
    }
\end{equation*}
sending $y_i$ to $z_{\ov{i}}$. Note that $\pi\left( e^{\pm \alpha_i} \right) = e^{\pm  \alpha_{\ov{i}}}$ for $i \in I_\fin$.

Let $\mc{O}_1$ be the full subcategory of $\mc{O}$ consisting of $V$'s such that the $\ell$-weights ${\bf \Psi} = (\Psi_i(z))_{i \in \I}$ of $V$ satisfy the roots and poles of $\Psi_i(z)$ are in $q^\Z \omega^\Z$ for all $i \in \I$. Similarly, we define the full subcategory $\mc{O}_1^\sigma$ for the category $\mc{O}^\sigma$ of $U_q(\bo^\sigma)$.
Then it is known in \cite{Wan23} that there exists a ring homomorphism $\ov{\pi} : K(\mc{O}_1) \longrightarrow K(\mc{O}_1^\sigma)$ such that the following diagram commutes (cf.~\cite[Corollary 4.16]{H10a}):
\begin{equation*}
    \xymatrix@R=2.5em @C=5em{
        K(\mc{O}_1) \ar@{->}[r]^{{\rm ch}\quad\,} \ar@{->}[d]_{\ov{\pi}} & \Z[\![ y_i^{\pm 1} ]\!]_{i \in I_\fin} \ar@{->}[d]^{\pi} \\
        K(\mc{O}_1^\sigma) \ar@{->}[r]_{{\rm ch}^\sigma \quad\,} & \Z[\![ z_i^{\pm 1} ]\!]_{i \in I_\fin^\sigma}
    }
\end{equation*}

\begin{thm} \label{thm: character folding}
For $s \in \I$ and $a \in \bk^\times$, we have 
\begin{equation*}
{\rm ch}^\sigma \big( L_{\ov{s},a}^\pm \big) = \pi \left( {\rm ch}\left( L_{s,a}^\pm \right) \right),
\end{equation*}
where $\left[ L_{s,a}^\pm \right] \in K(\mc{O}_1)$ so that $[ L_{\ov{s},a}^\pm ] \in K(\mc{O}_1^\sigma)$.
\end{thm}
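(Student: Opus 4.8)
The plan is to use the commutative square that was set up just before the statement. By the definition of the prefundamental modules (Definition \ref{df:prefundamental}), the highest $\ell$-weight of $L_{s,a}^\pm$ over $U_q(\bo)$ has roots and poles in $q^{\Z}$, hence in $q^{\Z}\omega^{\Z}$, so $[L_{s,a}^\pm] \in K(\mc{O}_1)$; thus the left-hand vertical arrow $\ov{\pi}$ may be applied to $[L_{s,a}^\pm]$. The only substantive content of the theorem is therefore the identification
\begin{equation*}
    \ov{\pi}\big( [L_{s,a}^\pm] \big) = [L_{\ov{s},a}^\pm] \quad \text{in } K(\mc{O}_1^\sigma);
\end{equation*}
granting this, the commutativity of the diagram gives ${\rm ch}^\sigma(\ov{\pi}([L_{s,a}^\pm])) = \pi({\rm ch}(L_{s,a}^\pm))$, which is exactly the claimed equality once the left side is rewritten as ${\rm ch}^\sigma(L_{\ov{s},a}^\pm)$.

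To prove $\ov{\pi}([L_{s,a}^\pm]) = [L_{\ov{s},a}^\pm]$ I would appeal directly to the properties of the folding homomorphism established by Wang \cite{Wan23}. The key input is that $\ov{\pi}$ is known to be compatible with the parametrization of simple objects by Drinfeld rational fractions: there is an induced map on the monoids of $\ell$-weights (with roots and poles in $q^{\Z}\omega^{\Z}$) that, on the level of highest $\ell$-weights, sends the Drinfeld rational fraction $(\Psi_i(z))_{i \in I_\fin}$ of $L_{s,a}^\pm$ to the one defining $L_{\ov s,a}^\pm$. Concretely, one must check that the rational fraction $(\Psi_i(z))_{i\in I_\fin}$ with $\Psi_s(z) = (1-az)^{\pm1}$ and $\Psi_i(z)=1$ for $i \neq s$ is mapped, under the folding prescription $\Psi_{\sigma(i)}(z) = \Psi_i(\omega z)$ extended over an orbit (see Remark \ref{rem: l-weight on orbit}), to precisely the fraction appearing in Definition \ref{df:prefundamental} for the twisted module: in the case $s \neq \sigma(s)$ one gets $\Psi_{\sigma^k(s)}(z) = (1-a\omega^k z)^{\pm1}$, and in the case $s = \sigma(s)$ the contributions over the orbit multiply to $(1-a^r z^r)^{\pm1}$. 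This is a short computation with cyclotomic factors: $\prod_{k=0}^{r-1}(1-a\omega^k z) = 1 - a^r z^r$. Since $\ov{\pi}$ is a ring homomorphism between Grothendieck rings whose classes of simples are linearly independent, and since it matches highest $\ell$-weights of simples, it must send the class of the simple $L_{s,a}^\pm$ to the class of the simple with the corresponding highest $\ell$-weight, namely $L_{\ov s,a}^\pm$.

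The main obstacle, and the part requiring care rather than a new idea, is the bookkeeping of representatives on the $\sigma$-orbit and the precise normalization of the folding map on $\ell$-weights: one must make sure that the chosen smallest-representative convention (Remark \ref{rem: convention-1; notation for Borel}) is consistent with the map $\pi : y_i \mapsto z_{\ov i}$ on characters and with the identification $\I = I_\fin^\sigma$, so that the two sides of the asserted equality are literally the same class and not merely isomorphic up to relabelling. A secondary point is to confirm that Wang's $\ov{\pi}$ indeed restricts to the subcategories $\mc{O}_1 \to \mc{O}_1^\sigma$ in a way that preserves the property of being irreducible of prefundamental type — but this is covered by the cited results, since $\ov{\pi}$ is a homomorphism of Grothendieck rings and $[L_{s,a}^\pm]$ lies in $K(\mc{O}_1)$. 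Once these normalizations are pinned down, the proof is the one-line diagram chase above.
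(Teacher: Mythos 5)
Your proposal is correct and takes essentially the same route as the paper: the paper's entire proof is a one-line citation to \cite[Theorem 4.24]{Wan23}, so the substantive input you defer to Wang (the folding homomorphism $\ov{\pi}$ on $K(\mc{O}_1)$ and its compatibility with prefundamental highest $\ell$-weights, giving $\ov{\pi}([L^{\pm}_{s,a}])=[L^{\pm}_{\ov{s},a}]$) is exactly what the paper defers as well, and your diagram chase together with the check $\prod_{k=0}^{r-1}(1-a\omega^{k}z)=1-a^{r}z^{r}$ matching Definition \ref{df:prefundamental} and Remark \ref{rem: l-weight on orbit} is a faithful unpacking of that citation. The only minor caveat is that membership of $L^{\pm}_{s,a}$ in $\mc{O}_1$ is a condition on all of its $\ell$-weights rather than only the highest one, but this is standard and the paper likewise asserts it without further comment.
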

\begin{proof}
    It follows from \cite[Theorem 4.24]{Wan23}.
\end{proof}

\begin{rem}
{\em 
The above theorem is established in terms of the (twisted) $q$-character for $\mc{O}$, which can be viewed as an analogue of \cite[Theorem 4.15]{H10a} to the category $\mc{O}$ (see \cite{HJ, Wan23} for the notion of the (twisted) $q$-character for $\mc{O}$).
More generally, it was conjectured in \cite{H10a} that the folding map $\pi$ relates certain finite-dimensional simple $U_q(\ms{L}\g)$-modules to certain finite-dimensional simple  $U_q(\ms{L}\g^\sigma)$-modules in terms of $q$-characters, and this conjecture is further formulated in \cite{Wan23} for the categories $\mc{O}$ and $\mc{O}^\sigma$. Recently, the conjecture in \cite{H10a} is proved independently in \cite{FuQi26} for type $A$ and $D$, and in \cite{NeWa26} for the general case.
}
\end{rem}

\begin{cor}
For $s \in \I$ and $a \in \bk^\times$, we have ${\rm ch}^\sigma ( L_{\ov{s},a}^- ) = {\rm ch}^\sigma ( L_{\ov{s},a}^+ )$.
\end{cor}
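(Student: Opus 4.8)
The plan is to deduce this immediately from Theorem~\ref{thm: character folding}, combined with the observation that the character of the prefundamental module $L_{s,a}^\pm$ over the \emph{untwisted} quantum loop algebra does not depend on the sign. Indeed, by \eqref{eq: char of Lsa - intro} (proved in \cite{LeeCH, Neg25}) one has, for untwisted types,
\[
    {\rm ch}(L_{s,a}^\pm) = \prod_{\beta \in \mr{\sfDel}^+} \left( \frac{1}{1-e^{-\beta}} \right)^{[\beta]_s},
\]
and the right-hand side is manifestly independent of the $\pm$; hence ${\rm ch}(L_{s,a}^+) = {\rm ch}(L_{s,a}^-)$ in $\Z[\![ y_i^{\pm 1} ]\!]_{i \in I_\fin}$. (One could alternatively read this off from \eqref{eq: char of Lsa ; twisted - intro}, whose right-hand side is likewise sign-free, but that formula is only established later, in Theorem~\ref{thm: characters}, so the untwisted route is the appropriate one at this point.)

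Next I would apply the folding homomorphism $\pi$ to the equality ${\rm ch}(L_{s,a}^+) = {\rm ch}(L_{s,a}^-)$ and invoke Theorem~\ref{thm: character folding}, which gives ${\rm ch}^\sigma(L_{\ov{s},a}^\pm) = \pi\big({\rm ch}(L_{s,a}^\pm)\big)$; here one uses that $[L_{s,a}^\pm] \in K(\mc{O}_1)$, so that $[L_{\ov{s},a}^\pm] \in K(\mc{O}_1^\sigma)$, which is part of the hypotheses of that theorem and was already verified there. Chaining the identities yields
\[
    {\rm ch}^\sigma(L_{\ov{s},a}^-) = \pi\big({\rm ch}(L_{s,a}^-)\big) = \pi\big({\rm ch}(L_{s,a}^+)\big) = {\rm ch}^\sigma(L_{\ov{s},a}^+),
\]
which is the assertion.

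There is essentially no obstacle here: the corollary is a one-line consequence of Theorem~\ref{thm: character folding} and the sign-independence of the character of $L_{s,a}^\pm$, the latter being the genuine input (the existence of a limit of normalized $q$-characters of Kirillov--Reshetikhin modules, or equivalently the explicit product formula). The only mild point of care is bookkeeping: one must keep straight that $\ov{s} \in I_\fin^\sigma$ denotes the $\sigma$-orbit of $s$ as in the conventions preceding Theorem~\ref{thm: character folding}, and that the membership of the relevant prefundamental modules in $\mc{O}_1$ and $\mc{O}_1^\sigma$ is in force for the values of $a$ under consideration.
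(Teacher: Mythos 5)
Your proof is correct and follows essentially the same route as the paper: both deduce the corollary by applying the folding map of Theorem~\ref{thm: character folding} to the sign-independence of the untwisted character. The only (immaterial) difference is the input for that sign-independence: you invoke the explicit product formula of \cite{LeeCH, Neg25}, whereas the paper cites the equality ${\rm ch}(L_{s,a}^-) = {\rm ch}(L_{s,a}^+)$ directly from \cite{HJ}, using the spectral-shift automorphism $\tau_a$ to cover arbitrary $a \in \C^\times$.
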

\begin{proof}
It is known in \cite{HJ} that ${\rm ch}\left( L_{s,a}^- \right) = {\rm ch}\left( L_{s,a}^+ \right)$ for any $a \in \bk^\times$, and $L_{s,a}^\pm$ can be obtained from $L_{s,1}^\pm$ by the algebra automorphism $\tau_a : U_q(\bo) \rightarrow U_q(\bo)$ such that $\tau_a(x_{i,m}^\pm) = a^{\pm m} x_{i,m}^\pm$ and $\tau_a(\psi_i(z)) = \psi_i(az)$. This completes the proof by Theorem \ref{thm: character folding}.
\end{proof}

\subsection{Character formula of prefundamental modules}
Let us recall that $r$ is the integer, which is the superscript of $X_N^{(r)}$.
For $\beta \in \ov{\sfDel_+(t_{-\lambda_s})}$, set $\gamma_\beta \coloneq \gamma_{\beta'}$ if $\beta = \ov{\beta'}$ for $\beta' \in {\sfDel}_+(t_{-\lambda_s})$, and 
\begin{equation} \label{eq: def of xis}
	\xi_s(\beta) = 
    \begin{cases}
        \displaystyle \frac{d_s}{\gamma_\beta} & \text{if $\g$ is not of type $A_{2n}^{(2)}$,} \\
        \vphantom{\displaystyle \frac{1}{2}} \hskip 0.5mm 1 & \text{if $\g$ is of type $A_{2n}^{(2)}$ and $\beta' \in {\sfDel}_+(t_{-\lambda_s})_1$,} \\
        \displaystyle \frac{1}{2} & \text{if $\g$ is of type $A_{2n}^{(2)}$ and $\beta' \in {\sfDel}_+(t_{-\lambda_s})_2$,}
    \end{cases}
\end{equation}
where ${\sfDel}_+(t_{-\lambda_s})_1$ (resp.~${\sfDel}_+(t_{-\lambda_s})_2$) is the subset of ${\sfDel}_+(t_{-\lambda_s})$ containing the positive roots of the form $\alpha+k\delta$ (resp.~$2\alpha + (2k+1)\delta$) for $\alpha \in \mr{\sfDel}_+$ (resp.~$\alpha \in (\mr{\sfDel}_+)_s$) and $k \in \Z_+$.

\begin{thm} \label{thm: characters}
For $s \in \I$ and $a \in \bk^\times$, we have 
\begin{equation} \label{eq: product formula}
    {\rm ch}\left( U_q^-(w_s) \right) = {\rm ch}\left( L_{s,a}^\pm  \right) = 
    \begin{cases}
        \displaystyle 
        \prod_{\beta \in \ov{\sfDel_+(t_{-\lambda_s})}} \left( \frac{1}{1-e^{-{\beta}}} \right)^{[\beta]_s} & \text{if $r = 1$,} \\
        \displaystyle 
        \prod_{\beta \in \ov{\sfDel_+(t_{-\lambda_s})}} \left( \frac{1}{1-e^{-\beta}} \right)^{\xi_s(\beta)\,[\beta]_s} & \text{if $r > 1$,}
    \end{cases}
\end{equation}
where $[\beta]_s$ is the coefficient of the simple root $\alpha_s$ in $\beta$.
\end{thm}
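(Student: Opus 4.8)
The plan is to prove the two equalities by separate arguments. The identity ${\rm ch}(U_q^-(w_s)) = \prod(\,\cdots)$ is a computation inside $U_q^-(\g)$ with the dual PBW basis, while ${\rm ch}(L_{s,a}^\pm) = \prod(\,\cdots)$ is obtained by transporting the known untwisted character formula through the folding homomorphism of Theorem \ref{thm: character folding}. When $r=1$ there is nothing new: ${\rm ch}(L_{s,a}^\pm)$ is the formula conjectured in \cite{MY14} and proved in \cite{LeeCH, Neg25}, and ${\rm ch}(U_q^-(w_s))$ was computed in \cite{JKP25}. So I would concentrate on the twisted case $r>1$.

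For the first equality, fix $\wtd{w_s} = (i_1,\dots,i_\ell)\in R(w_s)$. The dual PBW monomials $F^{\rm up}({\bf c},\wtd{w_s})$, ${\bf c}\in\Z_+^\ell$, form a $\bk$-basis of $U_q^-(w_s)$ (cf.\ Definition \ref{df:Uq-w}), and $F^{\rm up}({\bf c},\wtd{w_s})$ has weight $-\sum_k c_k\beta_k$, hence contributes the monomial $e^{-\ov{\sum_k c_k\beta_k}}$ to the character. Summing the resulting geometric series coordinate by coordinate yields
\[
{\rm ch}(U_q^-(w_s)) = \prod_{\beta\in\sfDel_+(t_{-\lambda_s})}\frac{1}{1-e^{-\ov{\beta}}} = \prod_{\gamma\in\ov{\sfDel_+(t_{-\lambda_s})}}\left(\frac{1}{1-e^{-\gamma}}\right)^{m(\gamma)},
\]
where $m(\gamma) = \#\{\beta\in\sfDel_+(t_{-\lambda_s}) : \ov{\beta}=\gamma\}$. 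By Proposition \ref{prop: properties of fundamental translations} and $\ov{\delta}=0$, the fibre of $\ov{(\,\cdot\,)}$ over $\gamma=\alpha\in\mr{\sfDel}_+$ is the string $\{\alpha+k\gamma_\alpha\delta : 0\le k<(\lambda_s,\alpha)/\gamma_\alpha\}$ (together with the string $\{2\alpha+(2k+1)\delta : 0\le k<(\lambda_s,\alpha)\}$ over $\gamma=2\alpha$ for short $\alpha$ in type $A_{2n}^{(2)}$), so $m(\gamma)$ is the number of admissible integers $k$. Since $\lambda_s\in\eM$ one has $(\lambda_s,\alpha)\in\gamma_\alpha\Z$, so this count is exactly $(\lambda_s,\alpha)/\gamma_\alpha$; combining this with $(\lambda_s,\alpha_j)=d_j\delta_{sj}$ outside type $A_{2n}^{(2)}$ (respectively $(\lambda_s,\alpha_j)=\delta_{sj}$ in type $A_{2n}^{(2)}$, reflecting $\lambda_s=\ov{\La_s}$ versus $\lambda_s=\nu(\ov{\La_s^\vee})$) gives $m(\gamma)=\xi_s(\gamma)[\gamma]_s$, which is the asserted product for $U_q^-(w_s)$; in particular all exponents are non-negative integers.

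For the second equality, let $\Phi^+$ denote the set of positive roots of $\g_\fin$. By Theorem \ref{thm: character folding}, the twisted character ${\rm ch}(L_{s,a}^\pm)$ equals $\pi$ applied to the untwisted character of $L_{s,a}^\pm$ over $U_q(\ms{L}\g_\fin)$, and by the $r=1$ case already settled the latter equals $\prod_{\beta\in\Phi^+}(1-e^{-\beta})^{-[\beta]_s}$. Applying $\pi$, which sends $e^{-\alpha_i}\mapsto e^{-\alpha_{\ov{i}}}$, and then collecting factors according to the image $\pi(e^{-\beta})$, the claim reduces to showing that $\pi(e^{-\beta})$ for $\beta\in\Phi^+$ ranges exactly over $e^{-\gamma}$ for $\gamma\in\ov{\sfDel_+(t_{-\lambda_s})}$, with
\[
\sum_{\beta\in\Phi^+:\ \pi(e^{-\beta})=e^{-\gamma}}[\beta]_s=\xi_s(\gamma)[\gamma]_s .
\]
This is a statement about the folding $\g_\fin\to\cg$ determined by $\sigma$: it amounts to matching the $\sigma$-orbit sizes of roots of $\g_\fin$ (which are $1$ or $r$, except in type $A_{2n}^{(2)}$, where orbits interact with the doubling of short roots) against the long/short bookkeeping carried by $\gamma_\bullet$ and $\xi_s$. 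Since there are only the five twisted types $A_{2n-1}^{(2)}$, $A_{2n}^{(2)}$, $D_{n+1}^{(2)}$, $E_6^{(2)}$, $D_4^{(3)}$, this is a finite verification, for which the folding dictionary of \cite{H10a} is available, and it produces precisely the product obtained for $U_q^-(w_s)$ in the previous step.

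I expect this last step to be the main obstacle: one has to reconcile the two uses of the bar — the combinatorial folding $\alpha_i\mapsto\alpha_{\ov{i}}$ built into $\pi$, and the orthogonal projection $\h^\ast\to\ch^\ast$ appearing in the statement — and then check the multiplicity identity above, with particular care for type $A_{2n}^{(2)}$, where short roots of $\cg$ get doubled (this is exactly the source of the coefficient $\tfrac12$ in $\xi_s$) and no simple root is fixed by $\sigma$. Once these compatibilities are in place, everything else is routine bookkeeping with the PBW basis and the explicit description of $\sfDel_+(t_{-\lambda_s})$ in Proposition \ref{prop: properties of fundamental translations}.
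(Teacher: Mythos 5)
Your proposal follows essentially the same route as the paper: the first equality comes from counting dual PBW monomials and grouping the elements of $\sfDel_+(t_{-\lambda_s})$ according to their projections $\ov{(\,\cdot\,)}$, using Proposition \ref{prop: properties of fundamental translations} and $(\lambda_s,\alpha)$, and the second from pushing the untwisted formula of \cite{MY14, LeeCH, Neg25} through the folding map of Theorem \ref{thm: character folding}; your reduction to the fibre-multiplicity identity $\sum_{\beta'\in\Pi^{-1}(\gamma)}[\beta']_s=\xi_s(\gamma)[\gamma]_s$ is exactly the paper's claim \eqref{eq: our claim in char mul}. The one step you defer as ``routine bookkeeping'' --- verifying that identity for the five twisted types --- is, however, the bulk of the paper's proof (Cases 1--5, including complete root lists for $E_6^{(2)}$ and $D_4^{(3)}$), and it is a bit subtler than matching $\sigma$-orbit sizes against long/short data: the fibres of $\Pi$ need not preserve the coefficient of $\alpha_s$, e.g.\ in type $D_{n+1}^{(2)}$ with $s=n$ a long root $\gamma$ with $[\gamma]_n=2$ can have a singleton fibre $\{\beta'\}$ with $[\beta']_n=1$, compensated by $\xi_n(\gamma)=\tfrac{1}{2}$. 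So your outline is correct and coincides with the paper's strategy, but a complete proof still requires actually carrying out this finite case-by-case check rather than asserting it.
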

\begin{proof}
By definition of $\lambda_s$, we have
\begin{equation*}
    (\lambda_s, \alpha) = 
    \begin{cases}
        [\alpha]_s & \text{if $\g$ is of untwisted affine type or } A_{2n}^{(2)}, \\
        d_s [\alpha]_s & \text{otherwise.}
    \end{cases}
\end{equation*}
By Proposition \ref{prop: properties of fundamental translations}, we have \eqref{eq: product formula} for ${\rm ch}\left( U_q^-(w_s) \right)$.
For $r = 1$, \eqref{eq: product formula} for ${\rm ch}\left( L_{s,a}^\pm \right)$ follows from \cite{HJ, Nao13, LeeCH, Neg26} (cf.~Remark \ref{rem:character formula}).

Assume $r > 1$. Let us consider
\begin{equation*}
    \Pi : 
    \xymatrix@R=0em @C=5em{
    \sfQ \ar@{->}[r] & \sfQ^\sigma \\
    \alpha_i \ar@{|->}[r]  & \alpha_{\ov{i}}
    }
\end{equation*}
where $\sfQ$ (resp.~$\sfQ^\sigma$) is the root lattice for $\g_{\fin}$ (resp.~$\g_{\fin}^\sigma$).
By Proposition \ref{prop: properties of fundamental translations}, the map $\Pi$ induces the surjective map from $\ov{\sfDel_+(t_{-\lambda_s})}$ onto $\ov{\sfDel_+^\sigma(t_{-\lambda_{\ov{s}}})}$.
Furthermore, we have
\begin{align} 
& \sum_{\beta' \in \Pi^{-1}(\beta)} [\beta']_s = \xi_s(\beta) [\beta]_s. \label{eq: our claim in char mul}
\end{align}
By Theorem \ref{thm: character folding}, we deduce the formula \eqref{eq: product formula} of ${\rm ch}^\sigma ( L_{s,a}^\pm )$ from \eqref{eq: our claim in char mul}, since 
\begin{equation*}
\begin{split}
	{\rm ch}^\sigma( L_{s,a}^\pm ) &\overset{\,\,(\ast)\,\,}{=} \pi \left( {\rm ch} (L_{s,a}^\pm) \right) \\
	&\overset{\text{\eqref{eq: product formula}}}{=} \prod_{\beta \in \ov{\sfDel_+(t_{-\lambda_s})}} \left( \frac{1}{1-e^{-\Pi(\beta)}} \right)^{[\beta]_s} \\
	&=  \prod_{\beta \in \ov{\sfDel_+^\sigma(t_{-\lambda_s})}} \left( \frac{1}{1-e^{-\beta}} \right)^{\sum_{\beta' \in \Pi^{-1}(\beta)} [\beta']_s} \\
	&\overset{\text{\eqref{eq: our claim in char mul}}}{=} \prod_{\beta \in \ov{\sfDel_+^\sigma(t_{-\lambda_s})}} \left( \frac{1}{1-e^{-\beta}} \right)^{\xi_s(\beta)[\beta]_s}
\end{split}
\end{equation*}
where $(\ast)$ holds by Theorem \ref{thm: character folding}, and the second equality follows from the formula \eqref{eq: product formula} for $r = 1$. Now we verify \eqref{eq: our claim in char mul} as follows.
\smallskip 

{\it Case 1}. Suppose $X_N^{(r)} = A_{2n}^{(2)}$. Note that $\g_\fin$ is of type $A_{2n}$ and $\cg = \g_\fin^\sigma$ is of type $B_n$. Each positive root for $\g_\fin^\sigma$ is one of the following forms
    \begin{align}
        & \sum_{j \le k \le n} \alpha_k \quad (1 \le j \le n), \label{eq: positive short roots of Bn} \\
        &\sum_{j \le k < l} \alpha_k \quad\, (1 \le j < l \le n), \label{eq: positive long roots of Bn 1} \\
        & \sum_{j \le k < l} \alpha_k + 2\sum_{l \le k  \le n} \alpha_k \quad (1 \le j < l \le n), \label{eq: positive long roots of Bn 2}
    \end{align}
    where the positive roots in \eqref{eq: positive short roots of Bn} are short and the others in \eqref{eq: positive long roots of Bn 1} and \eqref{eq: positive long roots of Bn 2} are long.
    Note that each root in $\ov{\sfDel_+(t_{-\lambda_s})}$ is of the form $\sum_{j \le k \le l} \alpha_k$ for $1 \le j \le l \le 2n$. Then we have
    \begin{align*}
        \left| \Pi^{-1} \left( \beta \right) \right| = 
        \begin{cases}
            2 & \text{if } \beta \text{ is \eqref{eq: positive long roots of Bn 2}} \text{ for } l \le s, \\
            1 & \text{otherwise,}
        \end{cases}
        & \qquad 
        \xi_s(\beta) = 
        \begin{cases}
            \displaystyle \frac{1}{2} & \text{if } \beta = 2\alpha \text{ for some } \alpha \in (\mr{\sfDel}_+)_s, \vphantom{\displaystyle \sum_{j \le k < l}} \\
            \vphantom{\displaystyle \frac{1}{2}} 1 & \text{otherwise}. \\
        \end{cases}
    \end{align*}
    Hence we have \eqref{eq: our claim in char mul} in this case.
    \smallskip

    {\it Case 2}. Suppose $X_N^{(r)} = A_{2n-1}^{(2)}$. Note that $\g_\fin$ is of type $A_{2n-1}$ and $\cg = \g_\fin^\sigma$ is of type $C_n$. 
    Each positive root for $\g_\fin^\sigma$ is one of the following forms
    \begin{align}
        & \sum_{j \le k < l} \alpha_k \quad (1 \le j < l \le n), \label{eq: positive short roots of Cn 1} \\
        & \sum_{j \le k < l} \alpha_k + 2\sum_{l \le k  < n} \alpha_k + \alpha_n \quad (1 \le j < l \le n), \label{eq: positive short roots of Cn 2} \\
        & \sum_{j \le k < l} 2\alpha_k + \alpha_n \quad\, (1 \le j \le n), \label{eq: positive long roots of Cn}
    \end{align}
    where the positive roots in \eqref{eq: positive long roots of Cn} are long and the others in \eqref{eq: positive short roots of Cn 1} and \eqref{eq: positive short roots of Cn 2} are short. 
    Since each root in $\ov{\sfDel_+(t_{-\lambda_s})}$ is of the form $\sum_{j \le k \le l} \alpha_k$ for $1 \le j \le l \le 2n-1$, we have
    \begin{equation*}
        \xi_s(\beta) = 
        \begin{cases}
            1 & \text{if $s \neq n$ and $\beta$ is short,} \\
            1/2 & \text{if $s \neq n$ and $\beta$ is long,} \\
            2 & \text{if $s = n$ and $\beta$ is short,} \\
            1 & \text{if $s = n$ and $\beta$ is long,}
        \end{cases}
        \qquad 
        \left| \Pi^{-1}(\beta) \right|
        =
        \begin{cases}
             [\beta]_s & \text{if $s \neq n$ and $\beta$ is short,} \\
             1 & \text{if $s \neq n$ and $\beta$ is long,} \\
             2 & \text{if $s = n$ and $\beta$ is short,} \\
             1 & \text{if $s = n$ and $\beta$ is long,}
        \end{cases}
    \end{equation*}
    so that $[\beta']_s = [\beta'']_s$ for $\beta', \beta'' \in \Pi^{-1}(\beta)$, and $\left| \Pi^{-1} \left( \beta \right) \right| = \xi_s(\beta)\,[\beta']_s$.
    \smallskip

    {\it Case 3}. Suppose $X_N^{(2)} = D_{n+1}^{(2)}$. Note that $\g_\fin$ is of type $D_{n+1}$ and $\cg = \g_\fin^\sigma$ is of type $B_n$.  Recall that the positive roots for $\g_\fin^\sigma$ are given in \eqref{eq: positive short roots of Bn}--\eqref{eq: positive long roots of Bn 2}. The positive roots for $\g_\fin$ are given as follows:
    \begin{equation} \label{eq: positive roots of Dn}
    \begin{split}
        & \sum_{j \le k < l} \alpha_k \quad (1 \le j < l \le n+1), \\
        & \!\!\! \sum_{j \le k \le n-1} \alpha_k + \alpha_{n+1} \quad (1 \le j < n+1), \\
        & \sum_{j \le k < l} \alpha_k + 2 \sum_{l \le k < n} \alpha_k + \alpha_n + \alpha_{n+1} \quad (1 \le j < l < n+1).
    \end{split}
    \end{equation}
    Then we have
    \begin{equation*}
        \xi_s(\beta) = 
        \begin{cases}
            2 & \text{if $s \neq n$ and $\beta$ is short,} \\
            1 & \text{if $s \neq n$ and $\beta$ is long,} \\
            1 & \text{if $s = n$ and $\beta$ is short,} \\
            1/2 & \text{if $s = n$ and $\beta$ is long,}
        \end{cases}
        \qquad 
        \left| \Pi^{-1}(\beta) \right|
        =
        \begin{cases}
             2 & \text{if $s \neq n$ and $\beta$ is short,} \\
             1 & \text{if $s \neq n$ and $\beta$ is long,} \\
             1 & \text{if $s = n$ and $\beta$ is short,} \\
             1 & \text{if $s = n$ and $\beta$ is long.}
        \end{cases}
    \end{equation*}
    If $\beta \in \ov{\sfDel_+^\sigma(t_{-\lambda_s})}$ is long and $[\beta]_s = 2$, then we have two cases by \eqref{eq: positive short roots of Bn}--\eqref{eq: positive long roots of Bn 2} and \eqref{eq: positive roots of Dn}:
    \begin{enumerate}
        \item $\xi_s(\beta) = 1$ and $\Pi^{-1}(\beta) = \{ \beta' \}$, where $[\beta']_s = 2$, or 
        \item $\xi_s(\beta) = \frac{1}{2}$ and $\Pi^{-1}(\beta) = \{ \beta' \}$, where $[\beta']_s = 1$.
    \end{enumerate}
    In any case, we have \eqref{eq: our claim in char mul}. If $s \neq n$ and $\beta$ is short, one can check that $\xi_s(\beta) = 2$ and $[\beta']_s = [\beta'']_s = 1$ for $\beta', \beta'' \in \Pi^{-1}(\beta)$, so \eqref{eq: our claim in char mul} also holds in this case. The other cases follow by a similar argument.
    \smallskip

    {\it Case 4}. Suppose $X_N^{(r)} = E_6^{(2)}$. Note that $\g_\fin$ is of type $E_6$ and $\cg = \g_\fin^\sigma$ is of type $F_4^\dagger$, where there are $36$ (resp.~$24$) positive roots for $\g_\fin$ (resp.~$\g_\fin^\sigma$).
    For $\g_\fin$, we denote a positive root $a\alpha_1 + b\alpha_2 + c\alpha_3 + d \alpha_4 + e \alpha_5 + f \alpha_6$ by $\substack{\tiny f \\ abcde}$. We also use the similar notation for the positive roots of $\g_\fin^\sigma$. Recall Remark \ref{rem: convention-1; notation for Borel}.
    Then $\ov{\sfDel_+(t_{-\lambda_s})}$ and $\ov{\sfDel_+^\sigma(t_{-\lambda_s})}$ are given as follows, which enable us to verify \eqref{eq: our claim in char mul} in this case.

    \begin{enumerate}[(1)]
        \item If $s = 1$, then we have the following, where we place the roots of $\ov{\sfDel_+(t_{-\lambda_1})}$ on the top, and then those of $\ov{\sfDel_+^\sigma(t_{-\lambda_1})}$ at the bottom (we use this convention for other cases).
        \begin{align*}
        \xymatrix@C=-0.5em @R=1em{
            \rootesix{1}{0}{0}{0}{0}{0} \ar@{|->}[d] & \rootesix{1}{1}{0}{0}{0}{0} \ar@{|->}[d] & \rootesix{1}{1}{1}{0}{0}{0} \ar@{|->}[d] & \rootesix{1}{1}{1}{1}{0}{0} \ar@{|->}[d] & \rootesix{1}{1}{1}{1}{1}{0} \ar@{|->}[d] & \rootesix{1}{1}{1}{0}{0}{1} \ar@{|->}[d] & \rootesix{1}{1}{1}{1}{0}{1} \ar@{|->}[d] & \rootesix{1}{1}{2}{1}{0}{1} \ar@{|->}[d] & \rootesix{1}{2}{2}{1}{0}{1} \ar@{|->}[d] & \rootesix{1}{1}{1}{1}{1}{1} \ar@{|->}[d] & \rootesix{1}{1}{2}{1}{1}{1} \ar@{|->}[d] & \rootesix{1}{2}{2}{1}{1}{1} \ar@{|->}[d] & \rootesix{1}{1}{2}{2}{1}{1} \ar@{|->}[d] & \rootesix{1}{2}{2}{2}{1}{1} \ar@{|->}[d] & \rootesix{1}{2}{3}{2}{1}{1} \ar@{|->}[d] &  \rootesix{1}{2}{3}{2}{1}{2} \ar@{|->}[d] \\
            \rootf{1}{0}{0}{0} & \rootf{1}{1}{0}{0} & \rootf{1}{1}{1}{0} & \rootf{1}{2}{1}{0} & \blue{\rootf{2}{2}{1}{0}} & \rootf{1}{1}{1}{1} & \rootf{1}{2}{1}{1} & \rootf{1}{2}{2}{1} & \rootf{1}{3}{2}{1} & \blue{\rootf{2}{2}{1}{1}} & \blue{\rootf{2}{2}{2}{1}} & \rootf{2}{3}{2}{1} & \rootf{2}{3}{2}{1} & \blue{\rootf{2}{4}{2}{1}} & \blue{\rootf{2}{4}{3}{1}} & \blue{\rootf{2}{4}{3}{2}} \\
        }
        \end{align*}
        where the roots in blue are long and $|\Pi^{-1}(\beta)| = 1$ for roots $\beta$ except for $2321$.
        \smallskip

        \item If $s = 2$, then we have the following.
        \begin{align*}
            &
            \xymatrix@C=-0.3em @R=1em{
             \rootesix{0}{1}{0}{0}{0}{0} \ar@{|->}[d] & \rootesix{1}{1}{0}{0}{0}{0} \ar@{|->}[d] & \rootesix{0}{1}{1}{0}{0}{0} \ar@{|->}[d] & \rootesix{1}{1}{1}{0}{0}{0} \ar@{|->}[d] & \rootesix{0}{1}{1}{1}{0}{0} \ar@{|->}[d] & \rootesix{1}{1}{1}{1}{0}{0} \ar@{|->}[d] & \rootesix{0}{1}{1}{1}{1}{0} \ar@{|->}[d] & \rootesix{1}{1}{1}{1}{1}{0} \ar@{|->}[d] & \rootesix{1}{2}{2}{1}{0}{1} \ar@{|->}[d] & \rootesix{1}{2}{2}{1}{1}{1} \ar@{|->}[d] & \rootesix{1}{2}{2}{2}{1}{1} \ar@{|->}[d] & \rootesix{1}{2}{3}{2}{1}{1} \ar@{|->}[d] & \rootesix{0}{1}{1}{0}{0}{1} \ar@{|->}[d] & \rootesix{0}{1}{1}{1}{0}{1} \ar@{|->}[d] & \rootesix{0}{1}{2}{1}{0}{1} \ar@{|->}[d] \\
             \rootf{0}{1}{0}{0} & \rootf{1}{1}{0}{0} & \rootf{0}{1}{1}{0} & \rootf{1}{1}{1}{0} & \blue{\rootf{0}{2}{1}{0}} & \rootf{1}{2}{1}{0} & \rootf{1}{2}{1}{0} & \blue{\rootf{2}{2}{1}{0}} & \rootf{1}{3}{2}{1} & \rootf{2}{3}{2}{1} & \blue{\rootf{2}{4}{2}{1}} & \blue{\rootf{2}{4}{3}{1}} & \rootf{0}{1}{1}{1} & \blue{\rootf{0}{2}{1}{1}} & \blue{\rootf{0}{2}{2}{1}} \\
            }
            \\
            &
            \xymatrix@C=-0.3em @R=1em{
             \rootesix{1}{1}{1}{0}{0}{1} \ar@{|->}[d] & \rootesix{1}{1}{1}{1}{0}{1} \ar@{|->}[d] & \rootesix{1}{1}{2}{1}{0}{1} \ar@{|->}[d] & \rootesix{1}{2}{3}{2}{1}{2} \ar@{|->}[d] & \rootesix{0}{1}{1}{1}{1}{1} \ar@{|->}[d] & \rootesix{1}{1}{1}{1}{1}{1} \ar@{|->}[d] & \rootesix{0}{1}{2}{1}{1}{1} \ar@{|->}[d] & \rootesix{1}{1}{2}{1}{1}{1} \ar@{|->}[d] & \rootesix{0}{1}{2}{2}{1}{1} \ar@{|->}[d] & \rootesix{1}{1}{2}{2}{1}{1} \ar@{|->}[d] \\
             \rootf{1}{1}{1}{1} & \rootf{1}{2}{1}{1} & \rootf{1}{2}{2}{1} & \blue{\rootf{2}{4}{3}{2}} & \rootf{1}{2}{1}{1} & \blue{\rootf{2}{2}{1}{1}} & \rootf{1}{2}{2}{1} & \blue{\rootf{2}{2}{2}{1}} & \rootf{1}{3}{2}{1} & \rootf{2}{3}{2}{1}
            }
        \end{align*}
        where $|\Pi^{-1}(\beta)| = 1$ for roots $\beta$ except for $1210, 1321, 2321, 1211, 1221$.
        In particular, we have 
        \begin{equation*}
            \xi_2(2321) = 1, \quad \Pi^{-1}(2321) = \{ \rootesix{1}{2}{2}{1}{1}{1}, \rootesix{1}{1}{2}{2}{1}{1} \}
        \end{equation*}
        so that $\sum_{\beta' \in \Pi^{-1}(2321)}[\beta']_2 = 3$ and $\xi_2(2321) [2321]_2 = 3$.
        \smallskip

        \item If $s = 3$, then we have the following.
        \begin{align*}
            &
            \xymatrix@C=-0.3em @R=1em{
            \rootesix{0}{0}{1}{0}{0}{0} \ar@{|->}[d] & \rootesix{0}{1}{1}{0}{0}{0} \ar@{|->}[d] & \rootesix{1}{1}{1}{0}{0}{0} \ar@{|->}[d] & \rootesix{0}{0}{1}{1}{0}{0} \ar@{|->}[d] & \rootesix{0}{1}{1}{1}{0}{0} \ar@{|->}[d] & \rootesix{1}{1}{1}{1}{0}{0} \ar@{|->}[d] & \rootesix{0}{0}{1}{1}{1}{0} \ar@{|->}[d] & \rootesix{0}{1}{1}{1}{1}{0} \ar@{|->}[d] & \rootesix{1}{1}{1}{1}{1}{0} \ar@{|->}[d] & \rootesix{1}{2}{3}{2}{1}{1} \ar@{|->}[d] & \rootesix{0}{1}{2}{1}{0}{1} \ar@{|->}[d] & \rootesix{0}{1}{2}{1}{1}{1} \ar@{|->}[d] & \rootesix{0}{1}{2}{2}{1}{1} \ar@{|->}[d] & \rootesix{1}{1}{2}{1}{0}{1} \ar@{|->}[d] & \rootesix{1}{1}{2}{1}{1}{1} \ar@{|->}[d] \\
            \blue{\rootf{0}{0}{1}{0}} & \rootf{0}{1}{1}{0} & \rootf{1}{1}{1}{0} & \rootf{0}{1}{1}{0} & \blue{\rootf{0}{2}{1}{0}} & \rootf{1}{2}{1}{0} & \rootf{1}{1}{1}{0} & \rootf{1}{2}{1}{0} & \blue{\rootf{2}{2}{1}{0}} & \blue{\rootf{2}{4}{3}{1}} & \blue{\rootf{0}{2}{2}{1}} & \rootf{1}{2}{2}{1} & \rootf{1}{3}{2}{1} & \rootf{1}{2}{2}{1} & \blue{\rootf{2}{2}{2}{1}}
            } \\
            &
            \xymatrix@C=-0.3em @R=1em{
            \rootesix{1}{1}{2}{2}{1}{1} \ar@{|->}[d] & \rootesix{1}{2}{2}{1}{0}{1} \ar@{|->}[d] & \rootesix{1}{2}{2}{1}{1}{1} \ar@{|->}[d] & \rootesix{1}{2}{2}{2}{1}{1} \ar@{|->}[d] & \rootesix{1}{2}{3}{2}{1}{2} \ar@{|->}[d] & \rootesix{0}{0}{1}{0}{0}{1} \ar@{|->}[d] & \rootesix{0}{1}{1}{0}{0}{1} \ar@{|->}[d] & \rootesix{1}{1}{1}{0}{0}{1} \ar@{|->}[d] & \rootesix{0}{0}{1}{1}{0}{1} \ar@{|->}[d] & \rootesix{0}{1}{1}{1}{0}{1} \ar@{|->}[d] & \rootesix{1}{1}{1}{1}{0}{1} \ar@{|->}[d] & \rootesix{0}{0}{1}{1}{1}{1} \ar@{|->}[d] & \rootesix{0}{1}{1}{1}{1}{1} \ar@{|->}[d] & \rootesix{1}{1}{1}{1}{1}{1} \ar@{|->}[d] \\
            \rootf{2}{3}{2}{1} & \rootf{1}{3}{2}{1} & \rootf{2}{3}{2}{1} & \blue{\rootf{2}{4}{2}{1}} & \blue{\rootf{2}{4}{3}{2}} & \blue{\rootf{0}{0}{1}{1}} & \rootf{0}{1}{1}{1} & \rootf{1}{1}{1}{1} & \rootf{0}{1}{1}{1} & \blue{\rootf{0}{2}{1}{1}} & \rootf{1}{2}{1}{1} & \rootf{1}{1}{1}{1} & \rootf{1}{2}{1}{1} & \blue{\rootf{2}{2}{1}{1}}
            }
        \end{align*}
        Note that $|\Pi^{-1}(\beta)| = 2$ for all short roots, while $|\Pi^{-1}(\beta)| = 1$ for all long roots. If $\beta$ is a short root with $[\beta]_3 = m$, then we have $\xi_3(\beta) = 2$ and $[\beta']_3 = m$ for $\beta' \in \Pi^{-1}(\beta)$. If $\beta$ is a long root with $[\beta]_3 = m$, then $\xi_3(\beta) = 1$ and $[\beta']_3 = m$, where $\Pi^{-1}(\beta) = \{ \beta' \}$.
        \smallskip

        \item If $s = 6$\footnote{Recall that we denote $\ov{6}$ by $4$ in $I_\fin^\sigma$}, then we have the following. 
        \begin{align*}
            &
            \xymatrix@C=-0.3em @R=1em{
            \rootesix{0}{0}{0}{0}{0}{1} \ar@{|->}[d] & \rootesix{0}{0}{1}{0}{0}{1} \ar@{|->}[d] & \rootesix{0}{1}{1}{0}{0}{1} \ar@{|->}[d] & \rootesix{1}{1}{1}{0}{0}{1} \ar@{|->}[d] & \rootesix{0}{0}{1}{1}{0}{1} \ar@{|->}[d] & \rootesix{0}{1}{1}{1}{0}{1} \ar@{|->}[d] & \rootesix{1}{1}{1}{1}{0}{1} \ar@{|->}[d] & \rootesix{0}{0}{1}{1}{1}{1} \ar@{|->}[d] & \rootesix{0}{1}{1}{1}{1}{1} \ar@{|->}[d] & \rootesix{1}{1}{1}{1}{1}{1} \ar@{|->}[d] & \rootesix{1}{2}{3}{2}{1}{2} \ar@{|->}[d] & \rootesix{0}{1}{2}{1}{0}{1} \ar@{|->}[d] & \rootesix{0}{1}{2}{1}{1}{1} \ar@{|->}[d] & \rootesix{0}{1}{2}{2}{1}{1} \ar@{|->}[d] & \rootesix{1}{1}{2}{1}{0}{1} \ar@{|->}[d] \\
            \blue{\rootf{0}{0}{0}{1}} & \blue{\rootf{0}{0}{1}{1}} & \rootf{0}{1}{1}{1} & \rootf{1}{1}{1}{1} & \rootf{0}{1}{1}{1} & \blue{\rootf{0}{2}{1}{1}} & \rootf{1}{2}{1}{1} & \rootf{1}{1}{1}{1} & \rootf{1}{2}{1}{1} & \blue{\rootf{2}{2}{1}{1}} & \blue{\rootf{2}{4}{3}{2}} & \blue{\rootf{0}{2}{2}{1}} & \rootf{1}{2}{2}{1} & \rootf{1}{3}{2}{1} & \rootf{1}{2}{2}{1} 
            } \\
            &
            \xymatrix@C=-0.3em @R=1em{
            \rootesix{1}{1}{2}{1}{1}{1} \ar@{|->}[d] & \rootesix{1}{1}{2}{2}{1}{1} \ar@{|->}[d] & \rootesix{1}{2}{2}{1}{0}{1} \ar@{|->}[d] & \rootesix{1}{2}{2}{1}{1}{1} \ar@{|->}[d] & \rootesix{1}{2}{2}{2}{1}{1} \ar@{|->}[d] & \rootesix{1}{2}{3}{2}{1}{1} \ar@{|->}[d] \\
            \blue{\rootf{2}{2}{2}{1}} & \rootf{2}{3}{2}{1} & \rootf{1}{3}{2}{1} & \rootf{2}{3}{2}{1} & \blue{\rootf{2}{4}{2}{1}} & \blue{\rootf{2}{4}{3}{1}}
            }
        \end{align*}
        Note that $|\Pi^{-1}(\beta)| = 2$ for all short roots, while $|\Pi^{-1}(\beta)| = 1$ for all long roots. As in (3), we have $\xi_4(\beta) = 2$ for a short root $\beta$, and $\xi_4(\beta) = 1$ otherwise. Also, $[\beta']_4 = [\beta]_4$ for $\beta' \in \Pi^{-1}(\beta)$.
    \end{enumerate}
    \smallskip

    {\it Case 5}. Suppose $X_N^{(r)} = D_4^{(3)}$. Note that $\g_\fin$ is of type $D_4$ and $\cg = \g_\fin^\sigma$ is of type $G_2^\dagger$. Recall that the positive roots for $\g_\fin$ are given in \eqref{eq: positive roots of Dn} where $n = 3$. The set of positive roots for $\g_\fin^\sigma$ is $\{\, \alpha_1, \alpha_2, \alpha_1 + \alpha_2, 2\alpha_1 + \alpha_2, 3\alpha_1 + \alpha_2, 3\alpha_1 + 2\alpha_2\,\}$. Then $\ov{\sfDel_+(t_{-\lambda_s})}$ and $\ov{\sfDel_+^\sigma(t_{-\lambda_s})}$ are given as follows:
    \begin{equation} \label{eq: roots in D43}
    \begin{split}
        \ov{\sfDel_+(t_{-\lambda_s})} &= 
        \begin{cases}
            \{ 1000, 1100, 1110, 1101, 1111, 1211 \} & \text{if } s = 1, \\
            \{ 0100, 1100, 0110, 0101, 1110, 1101, 0111, 1111, 1211 \} & \text{if } s = 2,
        \end{cases}
        \\
        \ov{\sfDel_+^\sigma(t_{-\lambda_s})} &= 
        \begin{cases}
            \{ 10, 11, 21, \blue{31}, \blue{32} \} & \text{if } s = 1, \\
            \{ \blue{01}, 11, 21, \blue{31}, \blue{32} \} & \text{if } s = 2,
        \end{cases}
    \end{split}
    \end{equation}
    where the roots are written by a similar convention as in {\it Case 4} and the roots in blue are long. Then we have
    \begin{equation*}
        \xi_s(\beta) = 
        \begin{cases}
            1 & \text{if $s = 1$ and $\beta$ is short,} \\
            1/3 & \text{if $s = 1$ and $\beta$ is long,} \\
            3 & \text{if $s = 2$ and $\beta$ is short,} \\
            1 & \text{if $s = 2$ and $\beta$ is long,}
        \end{cases}
        \qquad 
        \left| \Pi^{-1}(\beta) \right|
        =
        \begin{cases}
            [\beta]_1 & \text{if $s = 1$ and $\beta$ is short,} \\
            1 & \text{if $s = 1$ and $\beta$ is long,} \\
            3 & \text{if $s = 2$ and $\beta$ is short,} \\
            1 & \text{if $s = 2$ and $\beta$ is long,}
        \end{cases}
    \end{equation*}
    In particular, if $\beta \in \ov{\sfDel_+^\sigma(t_{-\lambda_2})}$ is long and $[\beta]_2 = 2$, then one can check that $\Pi^{-1}(\beta) = \{ \beta' \}$ and $[\beta']_2 = 2$ by \eqref{eq: roots in D43}.
    This shows \eqref{eq: our claim in char mul} in this case.
\end{proof}

\begin{ex}
{\em 
Suppose $X_N^{(r)} = A_2^{(2)}$.
    Since
    $
        \displaystyle {\rm ch}\left( L_{1,a}^{\pm} \right) = \frac{1}{1-e^{-\alpha_1}} \frac{1}{1-e^{-(\alpha_1+\alpha_2)}},
    $
    it follows from Theorem \ref{thm: character folding} that
    \begin{equation} \label{eq: product formula in A22}
        {\rm ch}^\sigma ( L_{1,a}^{\pm} ) = \pi\left( {\rm ch} ( L_{1,a}^{\pm} ) \right) = 
        \frac{1}{1-e^{-\alpha_1}} \frac{1}{1-e^{-2\alpha_1}}.
    \end{equation}
    By Proposition \ref{prop: properties of fundamental translations}, we have $\sfDel_+(t_{-\lambda_1}) = \left\{ \alpha_1,\, 2\alpha_1 + \delta \right\}$ with $\xi_1(\alpha_1) = 1$ and $\xi_1(2\alpha_1) = \frac{1}{2}$.
     We remark that an explicit construction of $L_{1,a}^{\pm}$ for type $A_2^{(2)}$ is known in \cite{Gal15} (cf.~\cite[Appendix B]{Wan23}). The construction gives the following character formula
    \begin{equation} \label{eq: expansion in A22}
        {\rm ch}^\sigma \left( L_{1,a}^{\pm} \right) = \sum_{k = 0}^\infty \left\lfloor \frac{k+2}{2} \right\rfloor e^{-k\alpha_1}.
    \end{equation}
    By expanding \eqref{eq: product formula in A22}, we observe that the coefficient of $e^{-k\alpha_1}$ is given by $l+1$ when $k = 2l$ or $k = 2l+1$ for $l \in \Z_+$. This matches the coefficient of $e^{-k\alpha_1}$ in \eqref{eq: expansion in A22} for each $k \ge 0$.
}
\end{ex}

\begin{rem} \label{rem:character formula}
{\em
The product formula \eqref{eq: product formula} of ${\rm ch}( L_{s,a}^- )$ was conjectured in \cite{MY14}, where some cases were proved. 
Recall that ${\rm ch}( L_{s,a}^- )$ is the limit of normalized characters of Kirillov--Reshetikhin (KR) modules \raisebox{2pt}{\scalebox{0.85}{$W_{k,aq_s^{-2k+1}}^{(s)}$}} as $k \rightarrow \infty$ \cite{Na03a, Her06}, and the KR modules are known as the minimal affinizations of irreducible $U_q(\mathring{\g})$-modules $V(k\varpi_s)$ of highest weight $k\varpi_s$ (see~\cite{ChHe10} and references therein), where $\varpi_s$ is the $s$-th fundamental weight of $\mathring{\g}$. 
Then \eqref{eq: product formula} was proved for type $A_n^{(1)}$, $B_n^{(1)}$ and $C_n^{(1)}$ in \cite{Nao13}, and for type $G_2^{(1)}$ in \cite{LiNa16} by analyzing graded limits of minimal affinizations (cf.~\cite{Nao14} for types $D_n^{(1)}$). 
It was further proved in \cite{LeeCH} for all untwisted types, except for certain cases of $E_8^{(1)}$, by using an algebraic relation among the limits, which may be 
obtained from the $Q$-system \cite{Na03a, Her06} (cf.~\cite{FH18}) by extracting the dominant exponential terms.
Recently, it has also been proved in \cite{Neg26} for all untwisted types by realizing $L_{s,a}^-$ in terms of quantum shuffle algebras.
}
\end{rem}

\section{Minuscule prefundamental modules}
\label{sec:minuscule prefundamental modules}

\subsection{Computation of $\ell$-weights}
Throughout this subsection, we assume that $\g$ is not of type $A_{2n}^{(2)}$ for brevity.
Let $2\rho$ be the sum of positive roots of $\mathring{\mf g}$. Take a reduced expression $t_{2\rho}=s_{i_1} \cdots s_{i_N}\in W$. 
Consider a doubly infinite sequence $\dots, i_{-2}, \,i_{-1}, \,i_0, \,i_1, \, i_2 \dots$ by $i_k := i_{k\, ({\rm mod}\, N)}$ for $k \in \mathbb{Z}$. Then we define
\begin{equation*}
	\beta_k = 
	\begin{cases} 
	s_{i_0}s_{i_{-1}}\cdots s_{i_{k+1}} (\alpha_{i_k}) & \textrm{if $k \leqq 0$}, \\
	s_{i_1}s_{i_2}\cdots s_{i_{k-1}}(\alpha_{i_k}) & \textrm{if $k > 0$},
	\end{cases}
    \qquad
    \E_{\beta_k} = 
\begin{cases}
    T_{i_0}^{-1} T_{i_{-1}}^{-1} \cdots T_{i_{k+1}}^{-1}(e_{i_k}) & \textrm{if $k \leqq 0$}, \\
    T_{i_1} T_{i_2} \cdots T_{i_{k-1}}(e_{i_k}) & \textrm{if $k > 0$}.    
\end{cases}
\end{equation*}
Similarly, we define $\F_{\beta_k}$ for $-\beta_k$ in the same way with $e_{i_k}$ replaced by $f_{i_k}$.
Note that $\E_{\beta_k} \in U_q^+ (\g)$ and $\F_{\beta_k} \in U_q^- (\g)$ \cite{Lu10}, which are called the (real) root vectors for $\beta_k$. 
In particular, if $\beta_k = \alpha_i$ for some $i \in \I$, then $\E_{\beta_k} = e_i$ and $\F_{\beta_k} = f_i$. 

\begin{lem} 
\label{lem:independence of E}
	For $i \in I$ and $k > 0$, the root vectors $\E_{k\delta\pm\alpha_i}$ and $\F_{k\delta\pm\alpha_i}$ are independent of the choice of a reduced expression of $t_{2\rho}$.
	\qed
\end{lem}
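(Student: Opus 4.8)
The plan is to reduce the statement to the known uniqueness results for reduced expressions of translations together with the braid-relation compatibility of Lusztig's automorphisms $T_i$. Recall that $t_{2\rho}$ has length $N = |\mr{\sfDel}_+|$, so the bi-infinite word $\dots i_{-1}, i_0, i_1, \dots$ obtained by repeating a reduced word of $t_{2\rho}$ is (up to shift) a reduced word for the element $t_{2\rho}^m$ for every $m$, and more precisely every finite consecutive subword $i_{p+1}, \dots, i_q$ is a reduced word for $s_{i_{p+1}}\cdots s_{i_q} \in W$. The roots $\beta_k$ for $k$ in a window of length $N$ starting at position $p+1$ are exactly the biconvex listing $\sfDel_+(s_{i_{p+1}}\cdots s_{i_{p+N}})$ associated to that reduced word, and similarly the vectors $\E_{\beta_k}$ are the PBW root vectors built from that reduced word via the $T_i$'s. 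Thus the question is whether, for a fixed real root $\beta = k\delta \pm \alpha_i$, the root vector one extracts does not depend on which reduced word of $t_{2\rho}$ we repeated.

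First I would show it suffices to prove invariance under a single elementary move, namely a $2$-braid (commutation) relation $s_a s_b = s_b s_a$ for $a, b$ with $a_{ab} = 0$, applied inside one copy of the word. Indeed, any two reduced words of $t_{2\rho}$ are connected by braid moves; but the longer braid moves $s_a s_b s_a \cdots = s_b s_a s_b \cdots$ either change the word in a way that, by Proposition \ref{prop: properties of fundamental translations}(4) applied to the relevant fundamental translations (or directly by the structure of $t_{2\rho}$), still passes through positions whose local patterns around the target root $\beta$ agree — here I would invoke that between two consecutive occurrences of a reflection $s_i$ in the bi-infinite word the root $k\delta \pm \alpha_i$ appears exactly once in a canonically determined window, so only the reduced word used to traverse that window matters. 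For the commutation move, the standard fact is $T_a T_b = T_b T_a$ when $a_{ab} = 0$ (see \cite{Lu10}), so swapping adjacent commuting letters $\dots i_t i_{t+1} \dots = \dots a b \dots \mapsto \dots b a \dots$ leaves $T_{i_1}\cdots T_{i_{k-1}}$ unchanged for all $k$ outside $\{t, t+1\}$, and for $k \in \{t, t+1\}$ one checks the two produced vectors $\E_{\beta_t}, \E_{\beta_{t+1}}$ are merely interchanged together with their roots $\beta_t, \beta_{t+1}$ (which are also swapped), so the assignment $\beta \mapsto \E_\beta$ is unchanged.

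The remaining point, and the one I expect to be the main obstacle, is the boundary/periodicity bookkeeping: a braid move applied "across the seam" where one period of the word ends and the next begins, or more seriously a long-braid move that genuinely alters which letters appear in a given period. To handle this I would use that $t_{2\rho} = t_\rho^2$ and that $\rho = \sum_i \lambda_i$ (sum of fundamental coweights/weights as appropriate), so $t_{2\rho}$ is a product of the commuting fundamental translations $t_{\lambda_i}$, each of which has, by Proposition \ref{prop: properties of fundamental translations}(4), an essentially unique reduced expression up to $2$-braids; concatenating these gives a distinguished reduced word of $t_{2\rho}$, and any reduced word is $2$-braid equivalent to a "block" rearrangement of it. Because $2$-braid equivalence is exactly the case already handled, and block reordering corresponds to conjugating by the commuting $T$'s of the translation pieces (again using $T_a T_b = T_b T_a$ for commuting generators, now at the level of whole blocks $T_{w_i}$), the root vectors $\E_{k\delta \pm \alpha_i}$ come out the same. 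The same argument applies verbatim to $\F_{k\delta \pm \alpha_i}$ since $\F_{\beta} = \ast(\E_\beta)$ up to the $T_i \leftrightarrow T_i^{-1}$ symmetry $T_i^{-1} = \ast T_i \ast$ recorded in Section \ref{subsec: quantum group}, so no separate computation is needed. $\qed$
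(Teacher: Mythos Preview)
Your proposal has a genuine gap. The crucial claim --- that every reduced expression of $t_{2\rho}$ is $2$-braid equivalent to a ``block'' concatenation of reduced expressions of the $w_i$'s --- is false. Already in type $A_2^{(1)}$ there are no commuting simple reflections whatsoever, so \emph{every} braid move is a $3$-braid move; yet $t_{2\rho}$ has many reduced expressions. Proposition~\ref{prop: properties of fundamental translations}(4) gives $2$-braid uniqueness for each individual $w_i$, but the translations $t_{\lambda_i}$ commute only as group elements; their Weyl-group parts $w_i$ do \emph{not} commute, and the diagram automorphisms $\tau_i$ intervene, so there is no ``block reordering via commuting $T$'s'' to invoke. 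Your treatment of the long braid moves is therefore the heart of the matter, and it is left as a hand-wave.

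Concretely, under a $3$-braid move $aba\to bab$ at positions $t,t{+}1,t{+}2$ the outer roots and vectors simply swap, but the middle root $\beta_{t+1}=s_{i_1}\cdots s_{i_{t-1}}(\alpha_a+\alpha_b)$ is fixed while the vector changes from $T_{i_1}\cdots T_{i_{t-1}}T_a(e_b)$ to $T_{i_1}\cdots T_{i_{t-1}}T_b(e_a)$, and $T_a(e_b)\neq T_b(e_a)$ in general. Showing that $k\delta\pm\alpha_i$ never sits as such a middle root in a reduced word of $t_{2\rho}$ (or that when it does the two vectors nevertheless agree) is exactly the nontrivial content of the lemma; your outline does not supply this. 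The paper's own proof bypasses a braid-move analysis entirely and simply cites \cite[Corollary~4.6]{MuTi18} (cf.\ \cite[Lemma~4.2]{JKP23}), where the independence is obtained by identifying $\E_{k\delta\pm\alpha_i}$ with a canonical expression built from the braid-group lift of a single fundamental translation, an expression that is manifestly free of any choice of reduced word.
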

\begin{proof}
    It follows from \cite[Corollary 4.6]{MuTi18} (cf.~\cite[Lemma 4.2]{JKP23}).
\end{proof}

Let $o : \I \,\rightarrow\, \{ \pm 1 \}$ be a map such that $o(i) = -o(j)$ whenever $a_{ij} < 0$.
It follows from \cite{Bec94a, Dam00} (cf.~\cite{Dam12,Dam15}) that
\begin{equation} \label{eq: Drinfeld generator 0}
	\psi_{i, {{\tt d}_i} k}^+ = o(i)^k (q_i-q_i^{-1})k_i
	\left( \E_{k{{\tt d}_i}\delta-\alpha_i}\E_{\alpha_i} - q_i^{-2}\E_{\alpha_i} \E_{k{{\tt d}_i}\delta-\alpha_i} \right)
\end{equation}
for $i \in \I$ and $k > 0$, where ${{\tt d}_i} \in \Z_+$ is given in Remark \ref{rem: vanishing generators}.
Note that if $s \in \I$ is a minuscule node, then ${{\tt d}}_s = 1$.

\begin{lem} \label{lem:inductive formula}
For $i \in \I$ and $k \in \mathbb{Z}_{>0}$, we have
\begin{equation*}
\begin{split}
\E_{(k+1){{\tt d}_i}\delta-\alpha_i} &= -\frac{1}{q_i+q_i^{-1}}
\Big(\,
\E_{{{\tt d}_i}\delta-\alpha_i} \E_{\alpha_i}\E_{k{{\tt d}_i}\delta-\alpha_i} - q_i^{-2} \E_{\alpha_i}\E_{{{\tt d}_i}\delta-\alpha_i}\E_{k{{\tt d}_i}\delta-\alpha_i} \\
& \qquad \qquad \quad\,\,\,- \E_{k{{\tt d}_i}\delta -\alpha_i} \E_{{{\tt d}_i}\delta-\alpha_i} \E_{\alpha_i} + q_i^{-2} \E_{k{{\tt d}_i}\delta - \alpha_i} \E_{\alpha_i} \E_{{{\tt d}_i}\delta - \alpha_i}\,
\Big)\,.
\end{split}
\end{equation*}
\end{lem}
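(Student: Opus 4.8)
The plan is to rewrite the right-hand side as a single commutator and to recognize it as an instance of the Drinfeld commutation relation between the first imaginary root vector at the node $i$ and the loop generators in the ``$\widehat{\mathfrak{sl}}_2$'' attached to $i$.

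First I would set
\begin{equation*}
    \mathsf{P}_i := \E_{\tt{d}_i\delta-\alpha_i}\E_{\alpha_i} - q_i^{-2}\E_{\alpha_i}\E_{\tt{d}_i\delta-\alpha_i},
\end{equation*}
and note that the four-term expression in the statement is exactly $\mathsf{P}_i\,\E_{k\tt{d}_i\delta-\alpha_i} - \E_{k\tt{d}_i\delta-\alpha_i}\,\mathsf{P}_i$; hence the lemma is equivalent to
\begin{equation*}
    \big[\,\mathsf{P}_i\,,\,\E_{k\tt{d}_i\delta-\alpha_i}\,\big] = -(q_i+q_i^{-1})\,\E_{(k+1)\tt{d}_i\delta-\alpha_i}.
\end{equation*}
Applying \eqref{eq: Drinfeld generator 0} with $k$ replaced by $1$ gives $\psi_{i,\tt{d}_i}^+ = o(i)(q_i-q_i^{-1})\,k_i\,\mathsf{P}_i$, so that $\mathsf{P}_i$ is, up to the sign $o(i)$ and the factor $k_i$, the Drinfeld generator $\psi_{i,\tt{d}_i}^+$; via the exponential expansion $\psi_i^+(z) = k_i\exp\!\big((q_i-q_i^{-1})\sum_{m\ge 1}h_{i,\tt{d}_i m}z^{\tt{d}_i m}\big)$ this identifies $\mathsf{P}_i$ with $o(i)\,h_{i,\tt{d}_i}$, the first imaginary root vector at the node $i$. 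In the same spirit I would record that, by Beck's construction in the untwisted case and its twisted counterpart in \cite{Dam12, Dam15}, the real root vector $\E_{k\tt{d}_i\delta-\alpha_i}$ is $o(i)^{k}$ times the Drinfeld loop generator $x_{i,k\tt{d}_i}^-$ (up to a convention-dependent power of $k_i$); Lemma \ref{lem:independence of E} is what makes these identifications unambiguous, since it guarantees that the $\E_{k\tt{d}_i\delta\pm\alpha_i}$ in play do not depend on the reduced expression of $t_{2\rho}$ chosen to define them.

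With these identifications the displayed identity becomes the standard Drinfeld relation $[\,h_{i,\tt{d}_i}\,,\,x_{i,l}^-\,] = -c_i\,x_{i,l+\tt{d}_i}^-$ inside the (possibly twisted) quantum affine $\mathfrak{sl}_2$ generated by the loop data at $i$, where the constant $c_i$ a priori depends on the twisting but collapses to $c_i = q_i+q_i^{-1}$ once this subalgebra is identified with a suitably rescaled copy of $U_{q_i}(\widehat{\mathfrak{sl}}_2)$. One then checks that the signs $o(i)$ and the scalars entering the normalizations of $\mathsf{P}_i$, $\E_{k\tt{d}_i\delta-\alpha_i}$ and $\E_{(k+1)\tt{d}_i\delta-\alpha_i}$ combine --- using $o(i)^2=1$ --- to reproduce precisely the factor $-\tfrac{1}{q_i+q_i^{-1}}$ in the statement. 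The case $k=1$, namely $\E_{2\tt{d}_i\delta-\alpha_i} = -\tfrac{1}{q_i+q_i^{-1}}[\mathsf{P}_i,\E_{\tt{d}_i\delta-\alpha_i}]$, is simply the definition of the second real root vector from the imaginary one in the Beck--Damiani formalism, and every larger $k$ follows from the same relation applied to $x_{i,k\tt{d}_i}^-$.

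The main obstacle is the bookkeeping rather than any conceptual point: one must pin down the exact normalizations in the (twisted) Drinfeld presentation --- the signs $o(i)$, the role of $\tt{d}_i$ when $\sigma(i)=i$, the precise coefficient in the $[h,x]$ relation before and after rescaling to an honest $U_{q_i}(\widehat{\mathfrak{sl}}_2)$, and the exact scalar relating $\E_{k\tt{d}_i\delta-\alpha_i}$ (as defined here through the operators $T_{i_1}T_{i_2}\cdots$) to $x_{i,k\tt{d}_i}^-$ --- and then verify that all of these constants cancel as claimed. The conceptual steps --- grouping the four terms into a commutator, turning $\mathsf{P}_i$ into the first imaginary root vector via \eqref{eq: Drinfeld generator 0}, and invoking the $[h,x]$ relation --- are short.
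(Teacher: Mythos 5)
Your proposal is correct and follows essentially the paper's own route: the paper proves this lemma simply by citing \cite[Lemma 4.3]{JKP23} for untwisted types and noting that the twisted case is almost identical, using the same functional equations (cf.\ \cite[Remark 3.6]{BN04}) with the factor $\mathtt{d}_i$ inserted. Your reduction of the right-hand side to $-\tfrac{1}{q_i+q_i^{-1}}\bigl[\mathsf{P}_i,\E_{k\mathtt{d}_i\delta-\alpha_i}\bigr]$, the identification of $\mathsf{P}_i$ with the first imaginary root vector via \eqref{eq: Drinfeld generator 0}, and the rank-one Drinfeld relation $[h_{i,\mathtt{d}_i},x^-_{i,l}]=-(q_i+q_i^{-1})\,x^-_{i,l+\mathtt{d}_i}$ are exactly those functional equations, so your sketch reconstructs the cited argument.
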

\begin{proof}
    The proof for untwisted types can be found in \cite[Lemma 4.3]{JKP23} (cf.~\cite[Lemma 5.10]{JKP25}), while the proof for twisted types is almost identical by using the same functional equations (e.g.~see \cite[Remark 3.6]{BN04}) involving the factor ${{\tt d}_i}$.
\end{proof}

\begin{lem} \label{lem: computation of l-weights}
{\em 
Let $V$ be a $U_q(\bo)$-module in the category $\mc{O}$, and let $V_\omega$ be a weight space of dimension $1$.
Suppose that a non-zero vector $v \in V_\omega$ satisfies 
\begin{enumerate}
    \item $e_i v = 0$ for all $i \in I_\fin$, and
    \item for each $i \in I_\fin$, there exists $b_i,\, c_i \in \bk$ such that $\E_{\alpha_i} \E_{{{\tt d}_i} \delta - \alpha_i}\,v = b_i v$ and $\E_{2{{\tt d}_i}\delta-\alpha_i}\, v = c_i \E_{{{\tt d}_i} \delta - \alpha_i}\, v$.
\end{enumerate}
Then $v$ is an $\ell$-weight vector with the $\ell$-weight $(\Psi_i(z))_{i \in I_{\rm fin}}$ given by 
\begin{equation*}
    \Psi_i(z) = \omega(i) \frac{1 - (c_i + b_i(q_i^{-1} - q_i^{-3})) o(i) z^{{\tt d}_i} }{1 - o(i) c_i z^{{\tt d}_i}}.
\end{equation*}
}
\end{lem}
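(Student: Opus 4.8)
The plan is to pin down the $\ell$-weight of $v$ by computing, for every $i\in I_\fin$ separately, the action of all Drinfeld generators $\psi_{i,m}^+$ on $v$ and showing it is scalar; the resulting eigenvalues will then assemble into the claimed rational function. Fix $i\in I_\fin$. Since $\g$ is assumed not of type $A_{2n}^{(2)}$, Remark~\ref{rem: vanishing generators} gives $\psi_{i,m}^+=0$ whenever ${\tt{d}_i}\nmid m$, so it suffices to evaluate $\psi_{i,{\tt{d}_i}k}^+$ on $v$ for $k\ge 0$. For $k=0$ this is $\psi_{i,0}^+v=k_iv=\omega(i)v$, which matches the constant term of the asserted $\Psi_i(z)$. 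For $k\ge 1$, I would apply \eqref{eq: Drinfeld generator 0}: using $\E_{\alpha_i}=e_i$ and the assumption $e_iv=0$, the summand $\E_{k{\tt{d}_i}\delta-\alpha_i}\E_{\alpha_i}v$ dies, leaving
\[
  \psi_{i,{\tt{d}_i}k}^+v\;=\;-\,o(i)^k(q_i-q_i^{-1})q_i^{-2}\,k_i\,\E_{\alpha_i}\E_{k{\tt{d}_i}\delta-\alpha_i}v .
\]
So everything reduces to controlling $\E_{\alpha_i}\E_{k{\tt{d}_i}\delta-\alpha_i}v$ for all $k\ge 1$.

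Next I would extract a recursion and show it closes on a two-dimensional space. Write $G_k:=\E_{k{\tt{d}_i}\delta-\alpha_i}$ and $\Phi:=G_1e_i-q_i^{-2}e_iG_1$; regrouping the right-hand side of Lemma~\ref{lem:inductive formula} as a commutator yields $G_{k+1}=-\frac{1}{q_i+q_i^{-1}}[\Phi,G_k]$ for $k\ge 1$. From $e_iv=0$ and the relation $\E_{\alpha_i}\E_{{\tt{d}_i}\delta-\alpha_i}v=b_iv$ one gets $\Phi v=-q_i^{-2}b_iv$, a scalar multiple of $v$. Applying the commutator recursion with $k=1$ and comparing with the relation $\E_{2{\tt{d}_i}\delta-\alpha_i}v=c_i\E_{{\tt{d}_i}\delta-\alpha_i}v$ then forces
\[
  \Phi\,G_1v\;=\;-\bigl((q_i+q_i^{-1})c_i+q_i^{-2}b_i\bigr)G_1v ,
\]
a scalar multiple of $G_1v$. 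With these two facts in hand, a straightforward induction on $k\ge 1$ — feed $G_kv=c_i^{\,k-1}G_1v$ and $\Phi v=-q_i^{-2}b_iv$ into $G_{k+1}=-\frac{1}{q_i+q_i^{-1}}[\Phi,G_k]$ and use the displayed value of $\Phi G_1v$, whereupon the $b_i$-terms cancel — gives $G_kv=c_i^{\,k-1}G_1v$ for all $k\ge 1$.

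Finally I would assemble the answer. From $G_kv=c_i^{\,k-1}G_1v$ we get $\E_{\alpha_i}G_kv=c_i^{\,k-1}\E_{\alpha_i}\E_{{\tt{d}_i}\delta-\alpha_i}v=c_i^{\,k-1}b_iv$, which is a scalar multiple of $v$, so $k_i$ acts on it by $\omega(i)$ and the reduction step gives $\psi_{i,{\tt{d}_i}k}^+v=-o(i)^k(q_i-q_i^{-1})q_i^{-2}c_i^{\,k-1}b_i\,\omega(i)\,v$ for $k\ge 1$. Hence $v$ is a joint eigenvector of all the $\psi_{i,m}^+$, i.e. an $\ell$-weight vector, and summing the resulting geometric series (using $(q_i-q_i^{-1})q_i^{-2}=q_i^{-1}-q_i^{-3}$) yields
\[
  \Psi_i(z)=\omega(i)\Bigl(1-(q_i^{-1}-q_i^{-3})b_i\,\frac{o(i)z^{\tt{d}_i}}{1-o(i)c_iz^{\tt{d}_i}}\Bigr)
  =\omega(i)\,\frac{1-\bigl(c_i+b_i(q_i^{-1}-q_i^{-3})\bigr)o(i)z^{\tt{d}_i}}{1-o(i)c_iz^{\tt{d}_i}},
\]
which is exactly the claimed formula.

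The main obstacle is the closure of the recursion: iterating $G_{k+1}=-\frac{1}{q_i+q_i^{-1}}[\Phi,G_k]$ on $v$ could a priori produce vectors outside $\bk v+\bk G_1v$, and it does not precisely because $\Phi v$ is proportional to $v$ and $\Phi G_1v$ is proportional to $G_1v$; the latter is where the relation $\E_{2{\tt{d}_i}\delta-\alpha_i}v=c_i\E_{{\tt{d}_i}\delta-\alpha_i}v$ enters essentially, and without it there is no reason for the $\ell$-weight to be rational with a degree-one denominator in $z^{\tt{d}_i}$. Smaller points to keep straight are the vanishing pattern of the $\psi_{i,m}^+$ (Remark~\ref{rem: vanishing generators}), which is what makes $\Psi_i(z)$ a series in $z^{\tt{d}_i}$, and the well-definedness of the root vectors $\E_{k{\tt{d}_i}\delta-\alpha_i}$ together with the identification $\E_{\alpha_i}=e_i$ (Lemma~\ref{lem:independence of E}).
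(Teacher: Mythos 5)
Your proposal is correct, and it reaches the same two milestones as the paper's proof --- first the relation $\E_{k\tt{d}_i\delta-\alpha_i}v=c_i^{\,k-1}\E_{\tt{d}_i\delta-\alpha_i}v$ for all $k\ge 1$, then the geometric-series summation of \eqref{eq: Drinfeld generator 0} using $e_iv=0$ and Remark \ref{rem: vanishing generators} --- but it establishes the key relation by a genuinely different mechanism. The paper invokes, via Lemma \ref{lem:independence of E} and Beck--Damiani, an element $h\in U_q(\bo)^0$ with $[h,\E_{k\tt{d}_i\delta-\alpha_i}]=\E_{(k+1)\tt{d}_i\delta-\alpha_i}$, asserts that $h$ acts on $v$ by a scalar $d$ because $v\in V_\omega$, and runs the induction through $h\E_{\tt{d}_i\delta-\alpha_i}v=(c_i+d)\E_{\tt{d}_i\delta-\alpha_i}v$; the Lemma \ref{lem:inductive formula} formula is used in the paper only later, to verify hypothesis (2) in the applications (Theorems \ref{thm: main1} and \ref{thm: main2}). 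You instead fold Lemma \ref{lem:inductive formula} into the proof itself, rewriting it as the commutator recursion $G_{k+1}=-\frac{1}{q_i+q_i^{-1}}[\Phi,G_k]$ with $\Phi=G_1e_i-q_i^{-2}e_iG_1$, and close the recursion using only hypotheses (1) and (2) (your computations of $\Phi v$, of $\Phi G_1v$ from the $k=1$ case, and the cancellation of the $b_i$-terms in the induction all check out). What your route buys is economy of input: you need no auxiliary Cartan-loop element and, in particular, no claim that such an element acts diagonally on $v$ --- a point the paper passes over quickly and which, for an arbitrary $v\in V_\omega$ with $\dim V_\omega>1$, would require extra justification (it is harmless in the paper's applications, where $v=1$ spans its weight space). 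What the paper's route buys is that it works directly from the structure of $U_q(\bo)^0$ without re-deriving the recursion, and it makes the role of the imaginary-root part of the Borel more transparent. Either way the final identity $(q_i-q_i^{-1})q_i^{-2}=q_i^{-1}-q_i^{-3}$ and the summation give exactly the stated $\Psi_i(z)$, so your argument is a valid, self-contained alternative proof.
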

\begin{proof}
By Lemma \ref{lem:independence of E} and \cite{Bec94a, Dam00}, there exists $h \in U_q(\bo)^0$ such that $[h,\E_{k {\tt d}_i \delta - \alpha_i}] = \E_{(k+1){\tt d}_i \delta - \alpha_i}$ for $k > 0$, where $U_q(\bo)^0=\langle\,\psi_{i,k}^+, k_i^{\pm 1} \,\rangle_{i\in\I,\, k>0}$.
Since $\dim V_\omega = 1$, we have $h v = d v$ for some $d \in \bk$. By (2), we also have
\begin{equation*}
    h \E_{{\tt d}_i\delta - \alpha_i} v = \E_{2{\tt d}_i \delta - \alpha_i} v + \E_{{\tt d}_i \delta - \alpha_i} h v = (c_i + d) \E_{{\tt d}_i \delta - \alpha_i} v.
\end{equation*}
Now we claim that 
\begin{equation} \label{eq: our claim 1}
\E_{k{\tt d}_i \delta - \alpha_i} v = c_i^{k-1} \E_{{\tt d}_i \delta - \alpha_i} v
\end{equation}
for $ k > 0$. We prove \eqref{eq: our claim 1} by induction on $k$. 
Clearly, our claim holds for $k = 1$. Next, we compute
\begin{equation*}
\begin{split}
    \E_{(k+1){\tt d}_i\delta-\alpha_i} v 
    &= h \E_{k{\tt d}_i \delta -\alpha_i} v - \E_{k{\tt d}_i\delta - \alpha_i} h v \\
    &= c_i^{k-1} h \E_{{\tt d}_i\delta-\alpha_i} v - d c_i^{k-1} \E_{{\tt d}_i\delta-\alpha_i} v \\
    &= c_i^{k-1} (c_i+d)\E_{{\tt d}_i\delta-\alpha_i} v - d c_i^{k-1} \E_{{\tt d}_i\delta-\alpha_i} v \\
    &= c_i^k \E_{{\tt d}_i\delta - \alpha_i} v
\end{split}
\end{equation*}
as desired. By \eqref{eq: Drinfeld generator 0}, \eqref{eq: our claim 1} and (1) (cf.~Remark \ref{rem: vanishing generators}), we have 
\begin{equation*}
    \Psi_i(z) = \omega(i) - \omega(i) b_i (q_i^{-1} - q_i^{-3}) \sum_{k=1}^\infty o(i)^k c_i^{k-1} z^{{{\tt d}_i} k} = \omega(i)\frac{1-(c_i+b_i(q_i^{-1}-q_i^{-3}))o(i) z^{{\tt d}_i}}{1-o(i)c_i z^{{\tt d}_i}}.
\end{equation*}
This completes the proof.
\end{proof}

\begin{cor} \label{cor: l-weights}
{\em
    Under the same hypothesis, we have
    \begin{equation*}
        \Psi_i(z) = \begin{cases}
            \omega(i) \vphantom{\displaystyle \omega(i)\frac{1}{1-o(i)c_i z^{{\tt d}_i}}} & \text{if $b_i = 0$,} \\
            \omega(i)(1-b_i(q_i^{-1}-q_i^{-3})o(i) z^{{\tt d}_i} ) \vphantom{\displaystyle \omega(i)\frac{1}{1-o(i)c_i z^{{\tt d}_i}}} & \text{if $c_i = 0$,} \\
            \displaystyle \omega(i)\frac{1}{1-o(i)c_i z^{{\tt d}_i}} & \text{if $c_i + b_i(q_i^{-1}-q_i^{-3}) = 0$.}
        \end{cases}
    \end{equation*}
}
\end{cor}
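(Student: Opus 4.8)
The plan is simply to read off the three cases from the formula established in Lemma~\ref{lem: computation of l-weights}. That lemma asserts, under the stated hypothesis, that $v$ is an $\ell$-weight vector with
\[
    \Psi_i(z) = \omega(i)\,\frac{1 - \bigl(c_i + b_i(q_i^{-1} - q_i^{-3})\bigr)\,o(i)\,z^{\tt{d}_i}}{1 - o(i)\,c_i\,z^{\tt{d}_i}}.
\]
First, if $b_i = 0$ then the numerator collapses to $1 - o(i)\,c_i\,z^{\tt{d}_i}$, which is exactly the denominator; after cancellation this gives $\Psi_i(z) = \omega(i)$. Second, if $c_i = 0$ then the denominator equals $1$ while the numerator is $1 - b_i(q_i^{-1}-q_i^{-3})\,o(i)\,z^{\tt{d}_i}$, so $\Psi_i(z) = \omega(i)\bigl(1 - b_i(q_i^{-1}-q_i^{-3})\,o(i)\,z^{\tt{d}_i}\bigr)$. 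Third, if $c_i + b_i(q_i^{-1}-q_i^{-3}) = 0$ then the numerator equals $1$, hence $\Psi_i(z) = \omega(i)/\bigl(1 - o(i)\,c_i\,z^{\tt{d}_i}\bigr)$.

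There is no real obstacle in proving the corollary itself: it is a bookkeeping statement recording precisely the three degenerate shapes that $\Psi_i(z)$ can take, which are exactly the ones that will arise when we later specialize $V$ to $U_q^-(w_s)$ equipped with the $U_q(\bo^\sigma)$-module structure of Proposition~\ref{eq: left action} and evaluate $b_i$ and $c_i$ on the highest weight vector $1 \in U_q^-(w_s)$. The only genuine work — namely, computing those scalars $b_i, c_i$ using the root-vector identity \eqref{eq: Drinfeld generator 0}, the inductive formula of Lemma~\ref{lem:inductive formula}, and the structure of the dual PBW basis from Lemma~\ref{lem:dual PBW properties 1} and Lemma~\ref{lem:dual PBW properties 2} — is deferred to the proofs of the main theorems; the corollary merely repackages the output of Lemma~\ref{lem: computation of l-weights} into the form those computations will invoke.
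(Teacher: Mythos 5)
Your proposal is correct and follows the same (implicit) route as the paper: the corollary is an immediate specialization of the rational expression for $\Psi_i(z)$ in Lemma \ref{lem: computation of l-weights}, obtained by substituting $b_i=0$, $c_i=0$, or $c_i+b_i(q_i^{-1}-q_i^{-3})=0$ and cancelling, exactly as you do. No further argument is needed, and you correctly note that the actual computation of $b_i,c_i$ belongs to the later theorems, not here.
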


\subsection{Realization of minuscule prefundamental modules for twisted types} 
Assume that $\sfA$ is of type $A_{2n-1}^{(2)}$ or $D_{n+1}^{(2)}$, where the Dynkin diagram corresponding to $\sfA$ is given in Appendix \ref{sec:list of dynkins}.
For types $A_{2n-1}^{(2)}$ and $D_{n+1}^{(2)}$, the minuscule nodes $s$ are $1$ and $n$, respectively. Note that ${\tt d}_s = 1$, since $\sigma(s) \neq s$ (see Remark \ref{rem: vanishing generators}).

Now we are in a position to state our main result in this paper.

\begin{thm} \label{thm: main1}
For a minuscule node $s \in \I$, we have
\begin{equation*}
    U_q^-(w_s) \cong L_{s,a\eta_s}^-
\end{equation*}
for some $\eta_s \in \bk^\times$, where $U_q^-(w_s)$ is the $U_q(\bo)$-module with respect to \eqref{eq: left action}.
\end{thm}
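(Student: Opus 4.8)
The plan is to show that the unit $1\in U_q^-(w_s)$ is a highest $\ell$-weight vector whose $\ell$-weight is that of $L_{s,a\eta_s}^-$ for a suitable $\eta_s\in\bk^\times$, and then to upgrade this to an isomorphism by comparing characters via Theorem \ref{thm: characters}. By \eqref{eq: left action} the generator $e_i$ acts on $U_q^-(w_s)$ by $e_i'$ for $i\in\I$, and $e_i'(1)=0$; so $1$ is annihilated by all $e_i$ with $i\in\I$ and has trivial $\mf t$-weight $\omega$ (hence $\omega(i)=1$ for $i\in\I$), and moreover $\tt{d}_s=1$ since $\sigma(s)\neq s$. Thus the hypotheses of Lemma \ref{lem: computation of l-weights} will hold once, for each $i\in\I$, we exhibit scalars $b_i,c_i$ with $\E_{\alpha_i}\E_{\tt{d}_i\delta-\alpha_i}\,1=b_i\,1$ and $\E_{2\tt{d}_i\delta-\alpha_i}\,1=c_i\,\E_{\tt{d}_i\delta-\alpha_i}\,1$, after which the $\Psi_i(z)$ are read off from Lemma \ref{lem: computation of l-weights} and Corollary \ref{cor: l-weights}.

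Since $s$ is minuscule, Proposition \ref{prop: properties of fundamental translations}(1), together with $d_s=1$ and $[\alpha]_s\in\{0,1\}$ for all $\alpha\in\mr{\sfDel}_+$, gives $\sfDel_+(t_{-\lambda_s})=\{\alpha\in\mr{\sfDel}_+\mid[\alpha]_s=1\}$, so by Theorem \ref{thm: characters} every nonzero weight space of $U_q^-(w_s)$ sits at a weight $-\sum_\beta n_\beta\beta$ with $\beta\in\sfDel_+(t_{-\lambda_s})$, whose $\alpha_s$-coefficient is $-\sum_\beta n_\beta\le0$, vanishing only for the zero weight. Now $\E_{\tt{d}_i\delta-\alpha_i}\,1\in U_q^-(w_s)_{-\alpha_i}$ (because $\ov{\tt{d}_i\delta-\alpha_i}=-\alpha_i$). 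For $i\neq s$ this space is therefore $0$, so $b_i=c_i=0$ and $\Psi_i(z)=1$. For $i=s$ the spaces $U_q^-(w_s)_{-\alpha_s}$ and $U_q^-(w_s)_{-2\alpha_s}$ are one-dimensional, spanned by $F^{\rm up}(\beta_1)$ and $F^{\rm up}(2\beta_1)$ with $\beta_1=\alpha_s$; hence $\E_{\delta-\alpha_s}\,1=\kappa_1F^{\rm up}(\beta_1)$ and $\E_{2\delta-\alpha_s}\,1=\kappa_2F^{\rm up}(\beta_1)$ for some $\kappa_1,\kappa_2\in\bk$. Using $\E_{\alpha_s}=e_s$ and Lemma \ref{lem:dual PBW properties 1}(2) (note $(\alpha_s,\beta_1)=2d_s=d_s+1$), we have $\E_{\alpha_s}F^{\rm up}(\beta_1)=e_s'(F^{\rm up}(\beta_1))=F^{\rm up}(0)=1$, so $b_s=\kappa_1$, and $c_s=\kappa_2/\kappa_1$ once $\kappa_1\neq0$.

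The main obstacle is to compute $\kappa_1$ and $\kappa_2$ and to verify the identity $c_s+b_s(q_s^{-1}-q_s^{-3})=0$ together with $\kappa_1\neq0$. I would proceed as follows. Choosing a reduced expression of $t_{2\rho}$ adapted to a path in the Dynkin diagram joining $0$ to $s$, one writes $\E_{\delta-\alpha_s}$ as an explicit iterated $q$-commutator of $e_0,e_{j_1},\dots,e_{j_r}$ and evaluates it on $1$ using $e_0\,1=a\xz$ (by \eqref{eq: left action}), the derivation rule \eqref{eq:derivation}, and Lemmas \ref{lem:dual PBW properties 1} and \ref{lem:dual PBW properties 2} together with the description of the crystal $B(\ov{\La_s})$ as in Example \ref{ex: xz for twisted types}; this gives $\kappa_1$ as a nonzero element of $\bk$ proportional to $a$ (the single factor $a$ matching $[\delta-\alpha_s]_0=1$). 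For $\kappa_2$ one applies Lemma \ref{lem:inductive formula} with $k=1$, $i=s$ to $1$: one of its four terms vanishes since $\E_{\alpha_s}\,1=0$, two reduce to $\kappa_1^2F^{\rm up}(\beta_1)$ via $\E_{\alpha_s}F^{\rm up}(\beta_1)=1$, and the last is computed from $\E_{\delta-\alpha_s}F^{\rm up}(\beta_1)\in\bk F^{\rm up}(2\beta_1)$ and $\E_{\alpha_s}F^{\rm up}(2\beta_1)\in\bk F^{\rm up}(\beta_1)$ (again by one-dimensionality and \eqref{eq:derivation}); collecting these expresses $c_s=\kappa_2/\kappa_1$ explicitly and, as I expect, yields the desired cancellation. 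This step is where the twisted data enters — the integers $\tt{d}_i$ in \eqref{eq: Drinfeld generator 0}, the precise shapes of $\theta$, $\xz$ and $\beta_1$, and the choice of \emph{left} rather than right multiplication by $\xz$, which is exactly what makes the resulting $\ell$-weight an $L^-$-type rational fraction rather than a polynomial — and, lacking a uniform argument, I would run it separately for $A_{2n-1}^{(2)}$ and $D_{n+1}^{(2)}$, as Example \ref{ex: xz for twisted types} suggests.

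Granting this, Lemma \ref{lem: computation of l-weights} gives $\Psi_i(z)=1$ for $i\neq s$ and, with $\tt{d}_s=1$, $\omega(s)=1$ and $c_s+b_s(q_s^{-1}-q_s^{-3})=0$, $\Psi_s(z)=(1-o(s)c_sz)^{-1}$; the remaining components are determined by $\Psi_{\sigma^k(s)}(z)=\Psi_s(\omega^k z)$ (Remark \ref{rem: l-weight on orbit}). Hence the highest $\ell$-weight ${\bf \Psi}=(\Psi_i(z))$ of $1$ coincides with that of $L_{s,a\eta_s}^-$ in Definition \ref{df:prefundamental}, where $\eta_s:=o(s)c_s/a\in\bk^\times$ is nonzero since $\kappa_1\neq0$. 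Finally, $U_q^-(w_s)$ lies in $\mc O$ with highest $\ell$-weight vector $1$, so $L({\bf \Psi})=L_{s,a\eta_s}^-$ is a subquotient of $U_q^-(w_s)$; since by Theorem \ref{thm: characters} the character of $U_q^-(w_s)$ equals that of $L_{s,a\eta_s}^-$, the subquotient $L({\bf \Psi})$ has the same character as the whole module, and by additivity of characters in short exact sequences this forces $U_q^-(w_s)\cong L({\bf \Psi})=L_{s,a\eta_s}^-$.
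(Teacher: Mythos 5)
Your strategy is the same as the paper's: show that $1\in U_q^-(w_s)$ is a highest $\ell$-weight vector, extract its $\ell$-weight from Lemma \ref{lem: computation of l-weights} and Corollary \ref{cor: l-weights} via the root vectors $\E_{\delta-\alpha_i}$ and $\E_{2\delta-\alpha_i}$, and conclude by the character identity of Theorem \ref{thm: characters}; your a priori reduction (for $i\neq s$ the weight space $U_q^-(w_s)_{-\alpha_i}$ vanishes, while for $i=s$ the spaces at $-\alpha_s$ and $-2\alpha_s$ are one-dimensional, so scalars $b_s,c_s$ exist automatically) is a reasonable repackaging of what the paper obtains by direct computation. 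However, there is a genuine gap exactly where you say "as I expect": you never actually compute $b_s$ and $c_s$, and in particular you do not verify the cancellation $c_s+b_s(q_s^{-1}-q_s^{-3})=0$ nor the nonvanishing $c_s\neq 0$. This is the substantive content of the theorem: without it, Lemma \ref{lem: computation of l-weights} only gives $\Psi_s(z)=\bigl(1-(c_s+b_s(q_s^{-1}-q_s^{-3}))o(s)z\bigr)\big/\bigl(1-o(s)c_sz\bigr)$, a priori a genuine degree-$(1,1)$ rational fraction, and one cannot conclude that the module is of negative prefundamental type, identify $\eta_s$, or rule out $\eta_s=0$. The paper carries this out case by case: it expands $\E_{\delta-\alpha_s}$ as in \eqref{eq: expansion of E-del-al1} and \eqref{eq: expansion of E-del-aln} (only the monomial ending in $e_0$ survives on $1$), evaluates $\E_{\delta-\alpha_s}1$, $\E_{\alpha_s}\E_{\delta-\alpha_s}1$ and $\E_{\alpha_s}\E_{\delta-\alpha_s}^2\,1$ using Lemmas \ref{lem:dual PBW properties 1}, \ref{lem:dual PBW properties 2} and Remark \ref{rem: e' and te}, and then deduces $\E_{2\delta-\alpha_s}1$ from Lemma \ref{lem:inductive formula}, yielding $b_1=-aq^{-2n+2}$ and $c_1=a(q^{-2n+1}-q^{-2n-1})$ in type $A_{2n-1}^{(2)}$ (and the analogous values in type $D_{n+1}^{(2)}$), from which the cancellation and $\eta_s\in\bk^\times$ are read off. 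Your sketch of how this would go is plausible and is essentially the paper's route, but as written the proof is incomplete at its crucial step.

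A separate, smaller inaccuracy: your claim that $[\alpha]_s\in\{0,1\}$ for all $\alpha\in\mr{\sfDel}_+$ is false in the twisted cases (for $A_5^{(2)}$ with $s=1$, the long root $2\alpha_1+2\alpha_2+\alpha_3\in\sfDel_+(t_{-\lambda_1})$ has $\alpha_1$-coefficient $2$; see Example \ref{ex: xz for twisted types}), so $\sfDel_+(t_{-\lambda_s})$ is not $\{\alpha\in\mr{\sfDel}_+\mid[\alpha]_s=1\}$. What your argument actually needs — that every $\beta\in\sfDel_+(t_{-\lambda_s})$ has $[\beta]_s\ge 1$, hence every nonzero weight of $U_q^-(w_s)$ has strictly negative $\alpha_s$-coefficient — remains true, so the conclusion $\Psi_i(z)=1$ for $i\neq s$ and the one-dimensionality at $-\alpha_s$, $-2\alpha_s$ survive, but the statement should be corrected.
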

\begin{proof}
    By Theorem \ref{thm: characters}, it is enough to show that the $\ell$-weight $(\Psi_i(z))_{i \in I_\fin}$ of $1 \in U_q^-(w_s)$ is of the form
    \begin{equation*}
        \Psi_i(z) = 
        \begin{cases}
            \displaystyle \frac{1}{1- a\eta_s \omega^k z} & \text{if } i = \sigma^k(s), \\
            \vphantom{\displaystyle \frac{1}{1- a\eta_s \omega^k z}} \,1 & \text{otherwise,}
        \end{cases}
    \end{equation*}
    (cf.~the proof of \cite[Theorem 5.4]{JKP25}).
    If $i \neq \sigma^k(s)$ for any $k$, then it follows from Proposition \ref{prop: properties of fundamental translations} and \cite[Lemma 4.9]{JKP25} (see also \cite[Remark 4.10]{JKP25}) that $-\alpha_{\ov{i}}$ is not a weight of $U_q^-(w_s)$ so that $\E_{\delta-\alpha_{\ov{i}}} 1 = 0$. This implies that $\Psi_{\sigma^k(i)}(z) = 1$ for any $k$, by Remark \ref{rem: l-weight on orbit} and Corollary \ref{cor: l-weights}.

    Suppose $i = \sigma^k(s)$. By Remark \ref{rem: l-weight on orbit}, it is enough to compute $\Psi_s(z)$.
    To do this, we compute $\E_{\alpha_s} \E_{\delta - \alpha_s}\,1$ and $\E_{2\delta-\alpha_s}\, 1$ as follows.
    \smallskip
    
    {\it Case 1}. Suppose that $\g$ is of type $A_{2n-1}^{(2)}$. Note that $\wtd{w_1} = (1,2, \dots, n-1, n, n-1, \dots, 2, 1)$.
    By the same argument in \cite[Lemma 5.11]{JKP25}, one can check that 
    \begin{equation} \label{eq: expansion of E-del-al1}
        \E_{\delta-\alpha_1} = -q^{-2n+2} e_2 e_3 \dots e_{n-1}e_n e_{n-1} \dots e_2 e_0 + X,
    \end{equation}
    where $X$ is the sum of the remaining monomials of the form $e_{i_1} \dots e_{i_k}$ with $e_{i_k} \neq e_0$.
    By \eqref{eq: expansion of E-del-al1}, Remark \ref{rem: e' and te} and Lemma \ref{lem:dual PBW properties 1}, we have
    \begin{equation*}
        \E_{\delta-\alpha_1} 1 = -aq^{-2n+2} f_1, \quad
        \E_{\alpha_1} \E_{\delta-\alpha_1} 1 = -aq^{-2n+2} 1,
    \end{equation*}
    (cf.~Example \ref{ex: xz for twisted types}). 
    By \eqref{eq:derivation}, we also have $\E_{\alpha_1} \E_{\delta-\alpha_1}^2 1 = a^2 q^{-4n+4}(1+q^{-2}) f_1$. Then it follows from Lemma \ref{lem:inductive formula} that 
    \begin{equation*}
        \E_{2\delta-\alpha_1}1 = a(q^{-2n+1}-q^{-2n-1}) \E_{\delta-\alpha_1} 1.
    \end{equation*}
    Until now, we have seen that $1 \in U_q^-(w_1)$ satisfies the conditions (1) and (2) in Lemma \ref{lem: computation of l-weights}, where $b_1 = -aq^{-2n+2}$ and $c_1 = a(q^{-2n+1}-q^{-2n-1})$. By Corollary \ref{cor: l-weights}, we have
    \begin{equation*}
        \Psi_1(z) = \frac{1}{1-o(1)c_1z},
    \end{equation*}
    since $c_1 + b_1(q^{-1}-q^{-3}) = 0$.
    This completes the proof in this case, where we set $\eta_1 = o(1)(q^{-2n+1}-q^{-2n-1})$.
    \smallskip

    {\it Case 2}. Suppose that $\g$ is of type $D_{n+1}^{(2)}$. Note that $\wtd{w_n} = w_1 w_2 \dots w_n$, where $w_k = (n, n-1, \dots, k)$. By definition, one can check
    \begin{equation} \label{eq: expansion of E-del-aln}
        \E_{\delta-\alpha_n} = (-1)^{n-1} q^{-2n+2} e_{n-1} e_{n-2} \dots e_1 e_0 + X,
    \end{equation}
    where $X$ is the sum of the remaining monomials of the form $e_{i_1} \dots e_{i_k}$ with $e_{i_k} \neq e_0$. 
    As in the previous case, we calculate
    \begin{align*}
        \E_{\alpha_n}\E_{\delta-\alpha_n} 1 &= (-1)^{n-1} a q^{-2n+2} 1, \\
        \E_{2\delta - \alpha_n} 1 &= (-1)^{n-1} a (q^{-2n-1}-q^{-2n+1}) \E_{\delta-\alpha_n} 1,
    \end{align*}
    (cf.~Example \ref{ex: xz for twisted types}).
    By Lemma \ref{lem: computation of l-weights} and Corollary \ref{cor: l-weights}, we have
    \begin{equation*}
        \Psi_n(z) = \frac{1}{1-o(n) c_n z},
    \end{equation*}
    where $b_n = (-1)^{n-1} aq^{-2n+2}$ and $c_n = (-1)^{n-1} a(q^{-2n-1}-q^{-2n+1})$.
    This completes the proof in this case, where we set $\eta_n = (-1)^{n-1} o(n) (q^{-2n-1}-q^{-2n+1})$.
\end{proof}

\begin{ex}
{\em 
Let us continue Example \ref{ex: xz for twisted types}. 
Assume that $\g$ is of type $D_3^{(2)}$.
For simplicity, we take $o(2) = 1$.
By Theorem \ref{thm: characters}, we have 
\begin{equation*}
    {\rm ch}( U_q^-(w_2) ) = {\rm ch}( L_{2,a}^\pm ) = \frac{1}{1-e^{-\alpha_2}} \frac{1}{1-e^{-(\alpha_1+2\alpha_2)}} \frac{1}{1-e^{-(\alpha_1+\alpha_2)}}.
\end{equation*}
%
On the other hand, we have $\E_{\delta-\alpha_2} = e_0 e_1 - q^{-2} e_1 e_0$.
By Lemma \ref{lem:dual PBW properties 1}, we have 
\begin{equation*}
    \E_{2\delta-\alpha_2} 1 = aq^{-3}(1-q^{-2}) \E_{\delta-\alpha_2} 1, \quad
    \E_{\alpha_2} \E_{\delta-\alpha_2} 1 = -aq^{-2} 1,    
\end{equation*}
where $b_2 = -aq^{-2}$ and $c_2 = aq^{-3}(1-q^{-2})$. Note that $b_2(q^{-3}-q^{-1}) = c_2$. 
By Lemma \ref{lem: computation of l-weights} and Corollary \ref{cor: l-weights}, we have
\begin{equation*}
    \Psi_i(z) = \begin{cases}
        \displaystyle \frac{1}{1-aq^{-3}(1-q^{-2})z} & \text{if $i = 2$,} \\
        \vphantom{\displaystyle \frac{1}{1-aq^{-3}(1-q^{-2})z}} \,1 & \text{if $i = 1$.}
    \end{cases}
\end{equation*}
Hence $U_q^-(w_2) \cong L_{2,a\eta_2}^-$ with $\displaystyle \eta_2 = q^{-3}(1-q^{-2})$.
}
\end{ex}

Following \cite{JKP23}, we endow $U_q^-(w_s)$ with another $U_q(\bo)$-module structure as follows:

\begin{equation} \label{eq: right action}
    k_i(u) = 
    \begin{cases}
        q^{(\alpha_i, {\rm wt}(u))} u & \text{if $i \in \I$,} \\
        q^{-(\theta, {\rm wt}(u))}u & \text{if $i = 0$,}
    \end{cases}
    \qquad
    e_i(u) = 
    \begin{cases}
        e_i'(u) & \text{if $i \in \I$,} \\
        aq^{-(\theta, {\rm wt}(u))} u \xz & \text{if $i = 0$.}
    \end{cases}
\end{equation}

\begin{thm} \label{thm: main2}
    Let $s \in \I$ be a minuscule node. Then $U_q^-(w_s)$ is a $U_q(\bo)$-module, which is an object of $\mc{O}$ with respect to \eqref{eq: right action}. Moreover, $U_q^-(w_s)$ is isomorphic to $L_{s,-a\eta_s}^+$ as a $U_q(\bo)$-module.
\end{thm}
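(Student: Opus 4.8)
The plan is to mirror the proof of Theorem~\ref{thm: main1} together with that of Proposition~\ref{eq: left action}. Two things must be established: first, that \eqref{eq: right action} genuinely defines a $U_q(\bo)$-module which is an object of $\mc{O}$; second, that $1\in U_q^-(w_s)$ is a highest $\ell$-weight vector whose highest $\ell$-weight is that of $L_{s,-a\eta_s}^+$, with the \emph{same} $\eta_s\in\bk^\times$ produced in Theorem~\ref{thm: main1}. Given both, Theorem~\ref{thm: characters} (the character equality) upgrades the surjection from $U_q^-(w_s)$ onto $L_{s,-a\eta_s}^+$ to an isomorphism.

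For well-definedness I would argue exactly as in Proposition~\ref{eq: left action}. The relations among the $k_i$ and the $q$-Serre relations among the $e_i=e_i'$ for $i\in\I$ are inherited from the $q$-boson action of $\ms{B}_q(\g)$ on $U_q^-(\g)$, restricted to the subalgebra $U_q^-(w_s)$; the relations $k_ie_0k_i^{-1}=q^{\langle\alpha_0,h_i\rangle}e_0$ follow at once from $\mathrm{wt}(\xz)=\theta$ and the weight-twist $q^{-(\theta,\mathrm{wt}(u))}$ built into the $0$-action. The only genuinely new point is the $q$-Serre relation between $e_0$ and $e_i$ for $i\in\I$, and this is handled by Lemma~\ref{lem:dual PBW properties 2}: one uses $e_i'(\xz)=0$ for $i\in J_0$, $(e_i')^2(\xz)=0$ for $i\in J_1$, and the $q$-commutation $\xz\cdot e_i'(\xz)=q^{-d_0(2+a_{0i})}e_i'(\xz)\cdot\xz$, exactly as on the left — the twist $q^{-(\theta,\mathrm{wt}(u))}$ being inserted precisely so that \emph{right} multiplication by $\xz$ interacts with the $q$-derivations $e_i'$ in the same way left multiplication did. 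Since the weights of $U_q^-(w_s)$ under \eqref{eq: right action} agree with those under \eqref{eq: left action}, membership in $\mc{O}$ follows from Proposition~\ref{eq: left action}.

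For the $\ell$-weight, note $e_i(1)=e_i'(1)=0$ for all $i\in\I$, so $1$ is a highest-weight vector of weight $\omega$ with $\omega(i)=1$, and Lemma~\ref{lem: computation of l-weights} applies. For $i$ not of the form $\sigma^k(s)$, Proposition~\ref{prop: properties of fundamental translations} shows $-\alpha_{\ov i}$ is not a weight of $U_q^-(w_s)$, so $\E_{\delta-\alpha_{\ov i}}1=0$ and $\Psi_i(z)=1$ by Remark~\ref{rem: l-weight on orbit} and Corollary~\ref{cor: l-weights}. For $i=\sigma^k(s)$ it suffices, by Remark~\ref{rem: l-weight on orbit}, to compute $\Psi_s(z)$, i.e. the scalars $b_s,c_s$. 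Using the expansion \eqref{eq: expansion of E-del-al1} (type $A_{2n-1}^{(2)}$) or \eqref{eq: expansion of E-del-aln} (type $D_{n+1}^{(2)}$), only the leading monomial of $\E_{\delta-\alpha_s}$ (the one ending in $e_0$) survives on $1$; since $e_0(1)=a\xz$, the same value as under \eqref{eq: left action}, one gets $\E_{\delta-\alpha_s}1=t\,v$ for a single root vector $v$ (e.g. $v=f_1$, $t=-aq^{-2n+2}$ in type $A_{2n-1}^{(2)}$) and hence the \emph{same} $b_s$, defined by $\E_{\alpha_s}\E_{\delta-\alpha_s}1=b_s\cdot 1$, as in the proof of Theorem~\ref{thm: main1}. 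The difference lies in $\E_{2\delta-\alpha_s}1$: by Lemma~\ref{lem:inductive formula} this is a combination of $\E_{\delta-\alpha_s}\E_{\alpha_s}\E_{\delta-\alpha_s}1$ and $\E_{\alpha_s}\E_{\delta-\alpha_s}^2\,1$, and in computing $\E_{\delta-\alpha_s}^2\,1$ one now meets right multiplications by $\xz$ that must be commuted past the $q$-derivations via Lemma~\ref{lem:dual PBW properties 2}(2) and the Leibniz rule \eqref{eq:derivation}. I expect the outcome to be $\E_{2\delta-\alpha_s}1=0$, i.e. $c_s=0$. Granting this, Corollary~\ref{cor: l-weights} gives $\Psi_s(z)=1-o(s)b_s(q_s^{-1}-q_s^{-3})z$, and substituting the value of $b_s$ from Theorem~\ref{thm: main1} produces $1+a\eta_s z$ with the same $\eta_s$; this is the highest $\ell$-weight of $L_{s,-a\eta_s}^+$ by Definition~\ref{df:prefundamental}, completing the identification.

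The main obstacle is precisely the vanishing $\E_{2\delta-\alpha_s}1=0$. In the left-handed case the two leading contributions to $\E_{2\delta-\alpha_s}1$ combine into a \emph{nonzero} multiple of $\E_{\delta-\alpha_s}1$ (so $c_s\neq0$ there), whereas on the right the insertions of $\xz$ pick up extra powers of $q$ governed by $a_{0i}$ through Lemma~\ref{lem:dual PBW properties 2}(2), and one must track the resulting cancellation carefully; in the higher-rank types this means combining \eqref{eq: expansion of E-del-al1}--\eqref{eq: expansion of E-del-aln} with the crystal-theoretic values of $e_i'$ on the dual PBW vectors from Lemma~\ref{lem:dual PBW properties 1}, in the spirit of Example~\ref{ex: xz for twisted types}. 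Once this vanishing is secured, the remainder is a routine adaptation of the proofs of Theorem~\ref{thm: main1} and Proposition~\ref{eq: left action}.
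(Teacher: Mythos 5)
Your proposal follows the paper's own route: a direct verification of the module structure in which the $q$-Serre relations between $e_0$ and the $e_i=e_i'$ ($i\in\I$) are checked using Lemma \ref{lem:dual PBW properties 2} (the paper does this explicitly for type $D_{n+1}^{(2)}$, in the spirit of \cite[Theorem 4.17]{JKP23}, rather than reducing to Proposition \ref{eq: left action}, but the inputs are exactly the ones you name), membership in $\mc{O}$ read off from the weight structure, and then the identification of the highest $\ell$-weight of $1$ via Lemma \ref{lem:inductive formula}, Lemma \ref{lem: computation of l-weights} and Corollary \ref{cor: l-weights}, with $b_s$ unchanged from Theorem \ref{thm: main1} because $e_0(1)=a\xz$ under both \eqref{eq: left action} and \eqref{eq: right action}, and with the final bookkeeping $\Psi_s(z)=1+a\eta_s z$ matching $L^+_{s,-a\eta_s}$ exactly as you describe.

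The one point you leave open is the decisive one: you only \emph{expect} $\E_{2\delta-\alpha_s}\,1=0$ (i.e.\ $c_s=0$) under \eqref{eq: right action}, and without this the plus-type $\ell$-weight is not established. The vanishing does hold, and the paper closes it by a short computation rather than a new idea. In type $D_{n+1}^{(2)}$: $\E_{\delta-\alpha_n}1$ is a multiple of $f_n$, and since $({\rm wt}(f_n),\theta)=0$ the twist $q^{-(\theta,{\rm wt}(u))}$ in the $0$-action contributes nothing when $e_0$ is applied the second time; one then finds $\E_{\alpha_n}\E_{\delta-\alpha_n}^2\,1=a^2q^{-4n+6}(1+q^{-2})f_n$, while $\E_{\alpha_n}\E_{\delta-\alpha_n}1=b_n\cdot 1$ with $b_n=(-1)^{n-1}aq^{-2n+2}$. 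Substituting into Lemma \ref{lem:inductive formula} (the term ending in $\E_{\alpha_n}$ annihilates $1$), the terms $(1+q^{-2})b_n\E_{\delta-\alpha_n}1$ and $q^{-2}\E_{\alpha_n}\E_{\delta-\alpha_n}^2\,1$ cancel, so $\E_{2\delta-\alpha_n}1=0$; type $A_{2n-1}^{(2)}$ is handled analogously from \eqref{eq: expansion of E-del-al1}. With this computation supplied, your argument coincides with the paper's proof; as written, the theorem's key assertion is assumed rather than proved.
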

\begin{proof}
    The proof of this theorem is almost identical to \cite[Theorem 4.17]{JKP23} (cf.~\cite[Theorem 5.15]{JKP25}) by using Lemmas \ref{lem:dual PBW properties 1} and \ref{lem:dual PBW properties 2}. 
    Here we provide the proof in the case of type $D_{n+1}^{(2)}$, where $s = n$. 
    The proof for type $A_{2n-1}^{(2)}$ is similar.
    \smallskip

    By definition, for $i, j \in I_0$ with $i \neq j$, the operators $e_i$'s in \eqref{eq: right action} satisfy the quantum Serre relations \eqref{eq: qsr1} by \cite[Lemma 3.4.2]{Kas91a}.
    Let us check the remaining cases for $(i,j) = (0,1)$ or $(i,j) = (1,0)$.
    Let $u \in U_q^-(w_n)$ be a homogeneous element. 
    Put $c_0 = aq^{-({\rm wt}(u), \theta)}$ and $c_1 = q^{({\rm wt}(u), \alpha_1)}$ for simplicity.
    \smallskip
    
    \begin{enumerate}[(1)]
    	\item Suppose that $i = 1$ and $j = 0$. Since $a_{ij} = -1$ and $q_i = q^2$, the quantum Serre relation \eqref{eq: qsr1} is given by
	\begin{equation*}
		e_1^2 e_0 - (q^2 + q^{-2}) e_1 e_0 e_1 + e_0 e_1^2 = 0.
	\end{equation*}
	Then one can check
	\begin{equation*}
	\begin{split}
			e_1^2 e_0 (u) &= c_0\left( (e_1')^2(u) \xz + c_1 (1+q^4) e_1'(u) e_1'(\xz) \right), \\
			e_1 e_0 e_1 (u) &= c_0 \left( q^{-2} (e_1')^2(u) \xz + q^2 c_1 e_1'(u) e_1'(\xz) \right), \\
			e_0 e_1^2 (u) &= c_0 q^{-4} (e_1')^2(u) \xz.
	\end{split}
	\end{equation*}
	Note that $(e_1')^2(\xz) = 0$ by Lemma \ref{lem:dual PBW properties 2}(1), so we obtain the above formula of $e_1^2 e_0 (u)$. 
    In $\left( e_1^2 e_0 - (q^2 + q^{-2}) e_1 e_0 e_1 + e_0 e_1^2 \right) (u)$, we obtain
    \begin{align*}
        \text{$\big($the coefficient of $(e_1')^2(u) \xz$ \!$\big)$} &= c_0 ( 1 - (q^2 + q^{-2}) q^{-2} + q^{-4} ) = 0, \\
        \text{$\big($the coefficient of $e_1'(u)e_1'(\xz)$ \!$\big)$} &= c_0 c_1 ( (1+q^4) - (q^2 + q^{-2}) q^2 ) = 0.
    \end{align*}
    Thus the operators $e_i$'s in \eqref{eq: right action} satisfy the quantum Serre relation in this case.
	\smallskip
	
	\item Suppose that $i = 0$ and $j = 1$. Since $a_{ij} = -2$ and $q_i = q$, the quantum Serre relation is given by 
	\begin{equation*}
		e_0^3 e_1 - (q^2 + 1 + q^{-2}) e_0^2 e_1 e_0 + (q^2 + 1 + q^{-2}) e_0 e_1 e_0^2 - e_1 e_0^3 = 0.
	\end{equation*}
	Then one can check 
	\begin{equation*}
	\begin{split}
			e_0^3 e_1(u) &= c_0^3 e_1'(u) \xz^3, \\
			e_0^2 e_1 e_0 (u) &= c_0^3 \left( q^2 e_1'(u) \xz^3 + c_1 u e_1'(\xz) \xz^2 \right), \\
			e_0 e_1 e_0^2 (u) &= c_0^3 \left( q^4 e_1'(u) \xz^3 + c_1 (1+q^{-2}) u e_1'(\xz) \xz^2 \right), \\
			e_1 e_0^3 (u) &= c_0^3 \left( q^6 e_1'(u) \xz^3 + c_1 (1+ q^{-2} + q^{-4}) u e_1'(\xz) \xz^2 \right).
	\end{split}
	\end{equation*}
	Note that $\xz e_1'(\xz) = e_1'(\xz) \xz$ by Lemma \ref{lem:dual PBW properties 2}(2), so we obtain the formulas of $e_0 e_1 e_0^2 (u)$ and $e_1 e_0^3 (u)$. 
    In $\left( e_0^3 e_1 - (q^2 + 1 + q^{-2}) e_0^2 e_1 e_0 + (q^2 + 1 + q^{-2}) e_0 e_1 e_0^2 - e_1 e_0^3 \right)(u)$, we obtain
    \begin{align*}
        \text{$\big($the coefficient of $e_1'(u) \xz^3$ \!$\big)$} &= c_0^3 \left( 1 - [3]_q q^2 + [3]_q q^4 - q^6 \right) = 0, \\ 
        \text{$\big($the coefficient of $u e_1'(\xz) \xz^2$ \!$\big)$} &= c_0^3 c_1 ( -[3]_q + (1+q^{-2}) [3]_q - (1+q^{-2} + q^{-4})) = 0,
    \end{align*}
    where $[3]_q = q^2 + 1 + q^{-2}$.
    Thus the operators $e_i$'s in \eqref{eq: right action} satisfy the quantum Serre relation in this case.
    \end{enumerate}
    The other relations for $e_i$ and $k_j$ can be checked by similar computations with \cite[Theorem 3.10]{JKP23}, so we leave the details to the reader.
    \smallskip
    
    Next, the $U_q(\bo)$-module $U_q^-(w_n)$ is $\mathfrak{t}$-diagonalizable by definition, where each weight space of $U_q^-(w_n)$ is finite-dimensional. Also, all weights of $U_q^-(w_n)$ are in $D(\overline{1})$, where $\overline{1}$ is the map from $I$ to $\bk^\times$ by $i \mapsto 1$ for all $i \in I$. Hence $U_q^-(w_n)$ is an object of $\mc{O}$ (see Definition \ref{df:category O}).
	\smallskip
	
    Finally, we check the second statement as follows.
	By Theorem \ref{thm: characters}, it is enough to compute the highest $\ell$-weight of $U_q^-(w_s)$ with respect to \eqref{eq: right action}.
    Recall \eqref{eq: expansion of E-del-aln}.
    Since $({\rm wt}(f_n), \theta) = 0$ and $({\rm wt}(f_n), \alpha_{n-1}) = 2$, we have 
    \begin{equation*}
        \E_{\alpha_n} \E_{\delta-\alpha_n}^2 1 = a^2 q^{-4n+6} (1+q^{-2}) f_n,
    \end{equation*}
    which implies that $E_{2\delta-\alpha_n} 1 = 0$.
    By Lemma \ref{lem: computation of l-weights} and Corollary \ref{cor: l-weights}, we conclude
    \begin{equation*}
        \Psi_i(z) = \begin{cases}
            1 + a \eta_n z & \text{if $i = n$,} \\
            1 & \text{otherwise.}
        \end{cases}
    \end{equation*}
    This completes the proof.
\end{proof}

\subsection{Further discussion on non-(co)minuscule cases} \label{subsec:non-(co)minuscule cases}
It is natural to ask whether the current realization of $L_{s,a}^\pm$ via the unipotent quantum coordinate ring $U_q^-(w_s)$ can be extended to the general case, that is, to non-(co)minuscule cases. An attempt in this direction was made in \cite{JKP25}. In that work, the authors generalized the $0$-operator on $U_q^-(w_s)$ \eqref{eq: left action} using braid group actions for all untwisted affine types with arbitrary $s \in \I$, while the remaining actions are still given by the $q$-derivations $e_i'$ for all $i \in \I$. 
However, for non-(co)minuscule cases, the $U_q(\bo)$-module $U_q^-(w_s)$ cannot be isomorphic to $L_{s,a}^-$, since the $U_q(\bo)$-module $U_q^-(w_s)$ has a different highest $\ell$-weight from that of $L_{s,a}^-$ and it is not irreducible \cite[Proposition 4.16]{JKP25}.

Let us recall that the negative prefundamental module $L_{s,a}^-$ is constructed from a certain inductive system associated with the family of KR modules $W_{k,aq_s^{-2k+1}}^{(s)}$ $(k \in \Z_+)$, over the asymptotic algebra formulated in terms of Drinfeld generators \cite{HJ}.
Note that the description of the $0$-action $e_0$ induced from the construction over the asymptotic algebra is more complicated than that of the other operators $e_i$ (see~\cite[Proposition 2.4]{HJ}). 
Moreover, to the best of our knowledge, no explicit description of the $0$-action $e_0$ of KR modules is known, except for type $A_n^{(1)}$ using Jimbo's evaluation homomorphism \cite{Jim85}, and consequently also that of prefundamental modules, although one can in principle extract the $0$-action of KR modules from the fusion construction using normalized $R$-matrices \cite{KKMMNN92a, Kas02a}.
This explains why the realization of $L_{s,a}^\pm$ using $U_q^-(w_s)$ in Theorems \ref{thm: main1}, \ref{thm: main2}, and \cite{JKP23, JKP25} may be interesting. In particular, the simple description of $0$-action $e_0$ is rather surprising in this context.

Let us now recall the crystal side of the picture.
It was conjectured by Hatayama et al. \!\!\cite{HKOTY99} that for $s \in \I$ and $k \in \mathbb{Z}_+$, there exists $a_{s, k} \in \bk^\times$ such that $W^{(s)}_{k, a_{s, k}}$ has the crystal pseudo-base introduced in \cite{KKMMNN92b} (cf.~\cite{Kas91a}).
The conjecture has been proved for $s$ in the orbit of or adjacent to $0$ in all affine types \cite{KKMMNN92a,KKMMNN92b}, all non-exceptional types \cite{OS08}, types $G_2^{(1)}$ and $D_4^{(3)}$ \cite{Nao18}, and types $E_{6,7,8}^{(1)}$, $F_4^{(1)}$, $E_6^{(2)}$ with near adjoint nodes \cite{NaSc21}.
Let $B^{s, k}$ be the crystal of the KR module $W^{(s)}_{k, a_{s, k}}$, which is called the KR crystal for short.
Note that it is well-known (e.g.~\cite{CP95}) that $W^{(s)}_{k, a}$ is isomorphic to $W_{k,1}^{(s)}$, so $W^{(s)}_{k, a_{s, k}} \cong W_{k,aq_s^{-2k+1}}^{(s)}$.

For (co)minuscule $s \in \I$, the crystal $B^{s,k}$ is classically irreducible, that is, it is connected as a classical crystal \cite{ScTi12}, and the $0$-th crystal operator $\wtd{e}_0$ can be described explicitly (see~\cite{FOS09} and references therein).
In particular, it is known in \cite{Kw13, JK19, Jang22, JaSc26}  that the KR crystal $B^{s,k}$ can be realized inside the PBW crystal associated to $U_q^-(w_s)$, where the $0$-crystal operator is given simply by adding $1$ to the Lusztig datum corresponding to the root $\theta$.
This is one of the motivations for \eqref{eq: right action} due to the relationship of the crystal operator $\te_i$ with the $q$-derivation $e_i'$ for $i \in \I$ (see Remark \ref{rem: e' and te}).

In contrast, for non-(co)minuscule cases, the restriction of ${\scalebox{0.9}{$W_{k,aq_s^{-2k+1}}^{(s)}$}}$ as a $U_q(\cg)$-module decomposes into a direct sum of several irreducible $U_q(\cg)$-modules \cite{Cha01}.
This suggests that the present approach may not be extended directly to the general case, since it is difficult to expect that the multiplication by the maximal root vector can capture the $0$-actions among (distinct) irreducible factors in the classical decomposition.
Thus, to obtain an isomorphism with $L_{s,a}^\pm$ for non-(co)minuscule $s \in \I$, 
we expect that a different description of the actions $e_i$ $(i \in I)$ on $U_q^-(w_s)$ will be required.
\vskip 2mm

\noindent
{\em \bf Acknowledgement.}\!
This work is partly motivated by a joint work with Travis Scrimshaw related to Kirillov--Reshetikhin crystals \cite{JaSc26}. The author is very grateful to Euiyong Park for helpful discussions and encouraging him in the preparation of this paper.
He also would like to sincerely thank the anonymous referees for helpful and detailed comments, which significantly improved the manuscript.
This work was supported by the National Research Foundation of Korea (NRF) grant funded by the Korea government (MSIT) (No.~RS-2023-00277388).
\vskip 2mm

\noindent
\textbf{Data Availability.}\; 
Data sharing is not applicable to this article as no datasets were generated or analyzed during the current study.

\section*{Declarations}

\noindent
\textbf{Conflict of Interest.}\;
The author states that there is no conflict of interest.

\appendix

\section{Dynkin diagrams of affine types with Kac labels} \label{sec:list of dynkins}

\begin{table}[H]
	\resizebox{\columnwidth}{!}{%
		\begin{tabular}{|c||c||c|c|}
			\hline
			type & Dynkin diagram & type & Dynkin diagram \\
			\hline \hline
			$A_1^{(1)}$ & 
			\begin{minipage}[c][0.06\textheight]{0.5\linewidth}
			\setlength{\unitlength}{0.17in}
			\begin{picture}(16,3.7)
					\put(7,2.1){\makebox(0,0)[c]{$\circ$}}
					\put(9,2.1){\makebox(0,0)[c]{$\circ$}}
					\put(7.3,2.0){\line(1,0){1.3}}
					\put(7.3,2.2){\line(1,0){1.3}}
					\put(7.0,1.89){\Large $\prec$}
					\put(8.2,1.89){\Large $\succ$}
					\put(7,1.3){\makebox(0,0)[c]{\tiny ${\alpha}_0$}}
					\put(9,1.3){\makebox(0,0)[c]{\tiny ${\alpha}_1$}}
                    %
                    \put(7,2.6){\makebox(0,0)[c]{\tiny $1$}}
					\put(9,2.6){\makebox(0,0)[c]{\tiny $1$}}
				\end{picture}
			\end{minipage} &
                $A_2^{(2)}$ & 
                \begin{minipage}[c][0.06\textheight]{0.5\linewidth}
			\setlength{\unitlength}{0.17in}
			\begin{picture}(16,3.7)
					\put(7,2.1){\makebox(0,0)[c]{$\circ$}}
					\put(9,2.1){\makebox(0,0)[c]{$\circ$}}
                        \put(7.3,1.8){\line(1,0){1.3}}
					\put(7.3,2.0){\line(1,0){1.3}}
					\put(7.3,2.2){\line(1,0){1.3}}
                        \put(7.3,2.4){\line(1,0){1.3}}
					%
					\put(7.7,1.69){\scalebox{2}{$\succ$}}
					\put(7,1.3){\makebox(0,0)[c]{\tiny ${\alpha}_0$}}
					\put(9,1.3){\makebox(0,0)[c]{\tiny ${\alpha}_1$}}
				    %
                        \put(7,2.7){\makebox(0,0)[c]{\tiny $1$}}
                        \put(9,2.7){\makebox(0,0)[c]{\tiny $2$}}
                    \end{picture}
			\end{minipage}
                \\ 
			\hline
			$\underset{(n \ge 2)}{A_n^{(1)}}$ & 
			\begin{minipage}[c][0.09\textheight]{0.5\linewidth}
			\setlength{\unitlength}{0.17in}
				\begin{picture}(16,5.5)
					\put(3,2.1){\makebox(0,0)[c]{$\circ$}}
					\put(5.6,2.1){\makebox(0,0)[c]{$\circ$}}
					\put(10.5,2.1){\makebox(0,0)[c]{$\circ$}}
					\put(13,2.1){\makebox(0,0)[c]{$\circ$}}
					\put(8.1,4.1){\makebox(0,0)[c]{$\circ$}}
					\put(3.7,2.1){\line(1,0){1.3}}
					\put(6,2.1){\line(1,0){1.3}}
					\put(8.7,2.1){\line(1,0){1.3}}
					\put(11.2,2.1){\line(1,0){1.3}}
					\put(7.7,3.9){\line(-3,-1){4.3}}
					\put(8.5,3.9){\line(3,-1){4}}

					\put(8,2.05){\makebox(0,0)[c]{$\cdots$}}
					\put(8,3.2){\makebox(0,0)[c]{\tiny $\alpha_0$}}
					\put(3.2,1.1){\makebox(0,0)[c]{\tiny ${\alpha}_1$}}
					\put(5.6,1.1){\makebox(0,0)[c]{\tiny ${\alpha}_2$}}
					\put(10.4,1.1){\makebox(0,0)[c]{\tiny ${\alpha}_{n-1}$}}
					\put(13,1.1){\makebox(0,0)[c]{\tiny ${\alpha}_{n}$}}
					%
                    \put(3,2.6){\makebox(0,0)[c]{\tiny $1$}}
					\put(5.6,2.6){\makebox(0,0)[c]{\tiny $1$}}
					\put(10.5,2.6){\makebox(0,0)[c]{\tiny $1$}}
					\put(13,2.6){\makebox(0,0)[c]{\tiny $1$}}
					\put(8.1,4.6){\makebox(0,0)[c]{\tiny $1$}}
				\end{picture}
			\end{minipage} & $\underset{(n \ge 2)}{A_{2n}^{(2)}}$
            & \begin{minipage}[c][0.06\textheight]{0.5\linewidth}
			\setlength{\unitlength}{0.17in}
			\begin{picture}(16,3.7)
					\put(3,2.1){\makebox(0,0)[c]{$\circ$}}
					\put(5.6,2.1){\makebox(0,0)[c]{$\circ$}}
					\put(10.5,2.1){\makebox(0,0)[c]{$\circ$}}
					\put(13,2.1){\makebox(0,0)[c]{$\circ$}}
					\put(3.7,2){\line(1,0){1.3}}
					\put(3.7,2.2){\line(1,0){1.3}}
					\put(6,2.1){\line(1,0){1.3}}
					\put(8.7,2.1){\line(1,0){1.3}}
					\put(11.2,2){\line(1,0){1.3}}
					\put(11.2,2.2){\line(1,0){1.3}}
                        \put(11.5,1.88){\Large $\succ$}
                        \put(4,1.88){\Large $\succ$}

					\put(8,2.05){\makebox(0,0)[c]{$\cdots$}}
					\put(3.2,1.1){\makebox(0,0)[c]{\tiny ${\alpha}_0$}}
					\put(5.6,1.1){\makebox(0,0)[c]{\tiny ${\alpha}_1$}}
					\put(10.4,1.1){\makebox(0,0)[c]{\tiny ${\alpha}_{n-1}$}}
					\put(13,1.1){\makebox(0,0)[c]{\tiny ${\alpha}_{n}$}}
					%
					\put(3,2.8){\makebox(0,0)[c]{\tiny $1$}}
                    \put(5.6,2.8){\makebox(0,0)[c]{\tiny $2$}}
					\put(10.4,2.8){\makebox(0,0)[c]{\tiny $2$}}
                    \put(13,2.8){\makebox(0,0)[c]{\tiny $2$}}
				\end{picture}
			\end{minipage}
            \\ 
			\hline
			$\underset{(n \ge 3)}{B_n^{(1)}}$ & 
			\begin{minipage}[c][0.10\textheight]{0.5\linewidth}
			\setlength{\unitlength}{0.17in}
				\begin{picture}(16,3.7)
					\put(2.8,0.7){\makebox(0,0)[c]{$\circ$}}
					\put(2.8,3.3){\makebox(0,0)[c]{$\circ$}}
					\put(5.6,2.1){\makebox(0,0)[c]{$\circ$}}
					\put(10.5,2.1){\makebox(0,0)[c]{$\circ$}}
					\put(13,2.1){\makebox(0,0)[c]{$\circ$}}

					\put(5.3,1.8){\line(-2,-1){2.1}}
					\put(5.3,2.2){\line(-2,1){2.1}}
					\put(6,2.1){\line(1,0){1.3}}
					\put(8.7,2.1){\line(1,0){1.3}}
					\put(11.2,2){\line(1,0){1.3}}
					\put(11.2,2.2){\line(1,0){1.3}}
					\put(11.5,1.88){\Large $\succ$}

					\put(8,2.05){\makebox(0,0)[c]{$\cdots$}}
					\put(2.8,2.5){\makebox(0,0)[c]{\tiny ${\alpha}_0$}}
					\put(2.8,0){\makebox(0,0)[c]{\tiny ${\alpha}_1$}}
					\put(5.6,1.1){\makebox(0,0)[c]{\tiny ${\alpha}_2$}}
					\put(10.4,1.1){\makebox(0,0)[c]{\tiny ${\alpha}_{n-1}$}}
					\put(13,1.1){\makebox(0,0)[c]{\tiny ${\alpha}_{n}$}}
					%
                    \put(2.8,1.4){\makebox(0,0)[c]{\tiny $1$}}
					\put(2.8,4){\makebox(0,0)[c]{\tiny $1$}}
					\put(5.6,2.8){\makebox(0,0)[c]{\tiny $2$}}
					\put(10.4,2.8){\makebox(0,0)[c]{\tiny $2$}}
					\put(13,2.8){\makebox(0,0)[c]{\tiny $2$}}
				\end{picture}
			\end{minipage} & 
			$\underset{(n \ge 3)}{A_{2n-1}^{(2)}}$ & 
			\begin{minipage}[c][0.10\textheight]{0.5\linewidth}
			\setlength{\unitlength}{0.17in}
				\begin{picture}(16,3.7)
					\put(2.8,0.7){\makebox(0,0)[c]{$\circ$}}
					\put(2.8,3.3){\makebox(0,0)[c]{$\circ$}}
					\put(5.6,2.1){\makebox(0,0)[c]{$\circ$}}
					\put(10.5,2.1){\makebox(0,0)[c]{$\circ$}}
					\put(13,2.1){\makebox(0,0)[c]{$\circ$}}

					\put(5.3,1.8){\line(-2,-1){2.1}}
					\put(5.3,2.2){\line(-2,1){2.1}}
					\put(6,2.1){\line(1,0){1.3}}
					\put(8.7,2.1){\line(1,0){1.3}}
					\put(11.2,2){\line(1,0){1.3}}
					\put(11.2,2.2){\line(1,0){1.3}}
					\put(11.5,1.88){\Large $\prec$}

					\put(8,2.05){\makebox(0,0)[c]{$\cdots$}}
					\put(2.8,2.5){\makebox(0,0)[c]{\tiny ${\alpha}_0$}}
					\put(2.8,0){\makebox(0,0)[c]{\tiny ${\alpha}_1$}}
					\put(5.6,1.1){\makebox(0,0)[c]{\tiny ${\alpha}_2$}}
					\put(10.4,1.1){\makebox(0,0)[c]{\tiny ${\alpha}_{n-1}$}}
					\put(13,1.1){\makebox(0,0)[c]{\tiny ${\alpha}_{n}$}}
					%
                    \put(2.8,1.4){\makebox(0,0)[c]{\tiny $1$}}
					\put(2.8,4){\makebox(0,0)[c]{\tiny $1$}}
					\put(5.6,2.8){\makebox(0,0)[c]{\tiny $2$}}
					\put(10.4,2.8){\makebox(0,0)[c]{\tiny $2$}}
					\put(13,2.8){\makebox(0,0)[c]{\tiny $1$}}
				\end{picture}
			\end{minipage}
			\\ 
			\hline
			$\underset{(n \ge 2)}{C_n^{(1)}}$ & 
			\begin{minipage}[c][0.06\textheight]{0.5\linewidth}
			\setlength{\unitlength}{0.17in}
			\begin{picture}(16,3.7)
					\put(3,2.1){\makebox(0,0)[c]{$\circ$}}
					\put(5.6,2.1){\makebox(0,0)[c]{$\circ$}}
					\put(10.5,2.1){\makebox(0,0)[c]{$\circ$}}
					\put(13,2.1){\makebox(0,0)[c]{$\circ$}}
					\put(3.7,2){\line(1,0){1.3}}
					\put(3.7,2.2){\line(1,0){1.3}}
					\put(6,2.1){\line(1,0){1.3}}
					\put(8.7,2.1){\line(1,0){1.3}}
					\put(11.2,2){\line(1,0){1.3}}
					\put(11.2,2.2){\line(1,0){1.3}}
					\put(11.5,1.88){\Large $\prec$}
					\put(4,1.88){\Large $\succ$}

					\put(8,2.05){\makebox(0,0)[c]{$\cdots$}}
					\put(3.2,1.1){\makebox(0,0)[c]{\tiny ${\alpha}_0$}}
					\put(5.6,1.1){\makebox(0,0)[c]{\tiny ${\alpha}_1$}}
					\put(10.4,1.1){\makebox(0,0)[c]{\tiny ${\alpha}_{n-1}$}}
					\put(13,1.1){\makebox(0,0)[c]{\tiny ${\alpha}_{n}$}}
					%
                    \put(3,2.8){\makebox(0,0)[c]{\tiny $1$}}
					\put(5.6,2.8){\makebox(0,0)[c]{\tiny $2$}}
					\put(10.4,2.8){\makebox(0,0)[c]{\tiny $2$}}
                    \put(13,2.8){\makebox(0,0)[c]{\tiny $1$}}
				\end{picture}
			\end{minipage} & $\underset{(n \ge 2)}{D_{n+1}^{(2)}}$ &
			\begin{minipage}[c][0.06\textheight]{0.5\linewidth}
			\setlength{\unitlength}{0.17in}
			\begin{picture}(16,3.7)
					\put(3,2.1){\makebox(0,0)[c]{$\circ$}}
					\put(5.6,2.1){\makebox(0,0)[c]{$\circ$}}
					\put(10.5,2.1){\makebox(0,0)[c]{$\circ$}}
					\put(13,2.1){\makebox(0,0)[c]{$\circ$}}
					\put(3.7,2){\line(1,0){1.3}}
					\put(3.7,2.2){\line(1,0){1.3}}
					\put(6,2.1){\line(1,0){1.3}}
					\put(8.7,2.1){\line(1,0){1.3}}
					\put(11.2,2){\line(1,0){1.3}}
					\put(11.2,2.2){\line(1,0){1.3}}
					\put(11.5,1.88){\Large $\succ$}
					\put(4,1.88){\Large $\prec$}

					\put(8,2.05){\makebox(0,0)[c]{$\cdots$}}
					\put(3.2,1.1){\makebox(0,0)[c]{\tiny ${\alpha}_0$}}
					\put(5.6,1.1){\makebox(0,0)[c]{\tiny ${\alpha}_1$}}
					\put(10.4,1.1){\makebox(0,0)[c]{\tiny ${\alpha}_{n-1}$}}
					\put(13,1.1){\makebox(0,0)[c]{\tiny ${\alpha}_{n}$}}
					%
					\put(3,2.8){\makebox(0,0)[c]{\tiny $1$}}
					\put(5.6,2.8){\makebox(0,0)[c]{\tiny $1$}}
					\put(10.5,2.8){\makebox(0,0)[c]{\tiny $1$}}
					\put(13,2.8){\makebox(0,0)[c]{\tiny $1$}}
				\end{picture}
			\end{minipage} \\ 
			\hline
			$\underset{(n \ge 4)}{D_n^{(1)}}$ &
			\begin{minipage}[c][0.11\textheight]{0.5\linewidth}
				\setlength{\unitlength}{0.17in}
				\begin{picture}(16,3.7)
					\put(2.8,0.7){\makebox(0,0)[c]{$\circ$}}
					\put(2.8,3.3){\makebox(0,0)[c]{$\circ$}}
					\put(5.6,2){\makebox(0,0)[c]{$\circ$}}
					\put(10.4,2){\makebox(0,0)[c]{$\circ$}}
					\put(13.1,3.3){\makebox(0,0)[c]{$\circ$}}
					\put(13.1,0.7){\makebox(0,0)[c]{$\circ$}}
					\put(5.3,1.8){\line(-2,-1){2.1}}
					\put(5.3,2.2){\line(-2,1){2.1}}
					\put(6,2){\line(1,0){1.3}}
					\put(8.7,2){\line(1,0){1.3}}
					\put(10.7,2.2){\line(2,1){2}}
					\put(10.7,1.8){\line(2,-1){2}}

					\put(8,1.95){\makebox(0,0)[c]{$\cdots$}}
					\put(2.8,2.5){\makebox(0,0)[c]{\tiny ${\alpha}_0$}}
					\put(2.8,0.0){\makebox(0,0)[c]{\tiny ${\alpha}_1$}}
					\put(5.6,1){\makebox(0,0)[c]{\tiny ${\alpha}_2$}}
					\put(10.4,1){\makebox(0,0)[c]{\tiny ${\alpha}_{n-2}$}}
					\put(13.1,2.5){\makebox(0,0)[c]{\tiny ${\alpha}_{n-1}$}}
					\put(13.1,0.0){\makebox(0,0)[c]{\tiny ${\alpha}_{n}$}}
					%
                    \put(2.8,1.4){\makebox(0,0)[c]{\tiny $1$}}
					\put(2.8,4){\makebox(0,0)[c]{\tiny $1$}}
					\put(5.6,2.7){\makebox(0,0)[c]{\tiny $2$}}
					\put(10.4,2.7){\makebox(0,0)[c]{\tiny $2$}}
                    \put(13.1,4){\makebox(0,0)[c]{\tiny $1$}}
					\put(13.1,1.4){\makebox(0,0)[c]{\tiny $1$}}
				\end{picture}
			\end{minipage} & $E_6^{(1)}$ & 
			\begin{minipage}[c][0.15\textheight]{0.5\linewidth}
			\setlength{\unitlength}{0.17in}
				\begin{picture}(16,8.5)
					\put(3,2.1){\makebox(0,0)[c]{$\circ$}}
					\put(5.6,2.1){\makebox(0,0)[c]{$\circ$}}
					\put(8,2.1){\makebox(0,0)[c]{$\circ$}}
					\put(10.5,2.1){\makebox(0,0)[c]{$\circ$}}
					\put(13, 2.1){\makebox(0,0)[c]{$\circ$}}
					\put(8,4.5){\makebox(0,0)[c]{$\circ$}}
					\put(8,7){\makebox(0,0)[c]{$\circ$}}

					\put(8, 4){\line(0, -3){1.3}}
					\put(8, 6.5){\line(0, -3){1.3}}
					\put(3.7,2.1){\line(1,0){1.3}}
					\put(6,2.1){\line(1,0){1.3}}
					\put(8.7,2.1){\line(1,0){1.3}}
					\put(11.2,2.1){\line(1,0){1.3}}
					\put(8.7,7){\makebox(0,0)[c]{\tiny $\alpha_0$}}
					\put(3.2,1.1){\makebox(0,0)[c]{\tiny ${\alpha}_1$}}
					\put(8.7,4.5){\makebox(0,0)[c]{\tiny $\alpha_6$}}
					\put(5.6,1.1){\makebox(0,0)[c]{\tiny ${\alpha}_2$}}
					\put(8,1.1){\makebox(0,0)[c]{\tiny $\alpha_3$}}
					\put(10.4,1.1){\makebox(0,0)[c]{\tiny ${\alpha}_4$}}
					\put(13,1.1){\makebox(0,0)[c]{\tiny ${\alpha}_5$}}
					%
                    \put(3,2.7){\makebox(0,0)[c]{\tiny $1$}}
					\put(7.4,4.5){\makebox(0,0)[c]{\tiny $2$}}
					\put(5.6,2.7){\makebox(0,0)[c]{\tiny $2$}}
					\put(8.5,2.7){\makebox(0,0)[c]{\tiny $3$}}
					\put(10.4,2.7){\makebox(0,0)[c]{\tiny $2$}}
                    \put(7.4,7){\makebox(0,0)[c]{\tiny $1$}}
                    \put(13, 2.7){\makebox(0,0)[c]{\tiny $1$}}
				\end{picture}
			\end{minipage} \\ 
			\hline
			$E_7^{(1)}$ & 
			\begin{minipage}[c][0.10\textheight]{0.5\linewidth}
			\setlength{\unitlength}{0.17in}
				\begin{picture}(16,5.5)
					\put(0.7,2.1){\makebox(0,0)[c]{$\circ$}}
					\put(3,2.1){\makebox(0,0)[c]{$\circ$}}
					\put(5.6,2.1){\makebox(0,0)[c]{$\circ$}}
					\put(8,2.1){\makebox(0,0)[c]{$\circ$}}
					\put(10.5,2.1){\makebox(0,0)[c]{$\circ$}}
					\put(13, 2.1){\makebox(0,0)[c]{$\circ$}}
					\put(15.5,2.1){\makebox(0,0)[c]{$\circ$}}
					\put(8,4.5){\makebox(0,0)[c]{$\circ$}}
					\put(8, 4){\line(0, -3){1.3}}
					\put(1.2,2.1){\line(1,0){1.3}}
					\put(3.7,2.1){\line(1,0){1.3}}
					\put(6,2.1){\line(1,0){1.3}}
					\put(8.7,2.1){\line(1,0){1.3}}
					\put(11.2,2.1){\line(1,0){1.3}}
					\put(13.7,2.1){\line(1,0){1.3}}
					\put(0.8,1.1){\makebox(0,0)[c]{\tiny ${\alpha}_0$}}
					\put(3.2,1.1){\makebox(0,0)[c]{\tiny ${\alpha}_1$}}
					\put(8.7,4.5){\makebox(0,0)[c]{\tiny $\alpha_7$}}
					\put(5.6,1.1){\makebox(0,0)[c]{\tiny ${\alpha}_2$}}
					\put(8,1.1){\makebox(0,0)[c]{\tiny $\alpha_3$}}
					\put(10.4,1.1){\makebox(0,0)[c]{\tiny ${\alpha}_4$}}
					\put(13,1.1){\makebox(0,0)[c]{\tiny ${\alpha}_5$}}
					\put(15.5, 1.1){\makebox(0,0)[c]{\tiny ${\alpha}_6$}}
					%
                    \put(0.7,2.7){\makebox(0,0)[c]{\tiny $1$}}
					\put(3.2,2.7){\makebox(0,0)[c]{\tiny $2$}}
					\put(7.5,4.5){\makebox(0,0)[c]{\tiny $2$}}
					\put(5.6,2.7){\makebox(0,0)[c]{\tiny $3$}}
					\put(8.5,2.7){\makebox(0,0)[c]{\tiny $4$}}
					\put(10.4,2.7){\makebox(0,0)[c]{\tiny $3$}}
					\put(13,2.7){\makebox(0,0)[c]{\tiny $2$}}
                    \put(15.5,2.7){\makebox(0,0)[c]{\tiny $1$}}
				\end{picture}
			\end{minipage} & $E_8^{(1)}$ & 
			\begin{minipage}[c][0.10\textheight]{0.5\linewidth}
			\setlength{\unitlength}{0.15in}
				\begin{picture}(16,5.5)
					\put(0.7,2.1){\makebox(0,0)[c]{$\circ$}}
					\put(3,2.1){\makebox(0,0)[c]{$\circ$}}
					\put(5.6,2.1){\makebox(0,0)[c]{$\circ$}}
					\put(8,2.1){\makebox(0,0)[c]{$\circ$}}
					\put(10.5,2.1){\makebox(0,0)[c]{$\circ$}}
					\put(13, 2.1){\makebox(0,0)[c]{$\circ$}}
					\put(15.5,2.1){\makebox(0,0)[c]{$\circ$}}
					\put(18,2.1){\makebox(0,0)[c]{$\circ$}}
					\put(13,4.5){\makebox(0,0)[c]{$\circ$}}
					\put(13,4){\line(0, -3){1.3}}
					\put(1.2,2.1){\line(1,0){1.3}}
					\put(3.7,2.1){\line(1,0){1.3}}
					\put(6,2.1){\line(1,0){1.3}}
					\put(8.7,2.1){\line(1,0){1.3}}
					\put(11.2,2.1){\line(1,0){1.3}}
					\put(13.7,2.1){\line(1,0){1.3}}
					\put(16.2,2.1){\line(1,0){1.3}}
					\put(0.8,1.1){\makebox(0,0)[c]{\tiny ${\alpha}_0$}}
					\put(3.2,1.1){\makebox(0,0)[c]{\tiny ${\alpha}_1$}}
					\put(13.8,4.5){\makebox(0,0)[c]{\tiny $\alpha_8$}}
					\put(5.6,1.1){\makebox(0,0)[c]{\tiny ${\alpha}_2$}}
					\put(8,1.1){\makebox(0,0)[c]{\tiny $\alpha_3$}}
					\put(10.4,1.1){\makebox(0,0)[c]{\tiny ${\alpha}_4$}}
					\put(13,1.1){\makebox(0,0)[c]{\tiny ${\alpha}_5$}}
					\put(15.5, 1.1){\makebox(0,0)[c]{\tiny ${\alpha}_6$}}
					\put(18, 1.1){\makebox(0,0)[c]{\tiny ${\alpha}_7$}}
					%
					\put(0.8,2.7){\makebox(0,0)[c]{\tiny $1$}}
					\put(3.2,2.7){\makebox(0,0)[c]{\tiny $2$}}
                    \put(5.6,2.7){\makebox(0,0)[c]{\tiny $3$}}
                    \put(8,2.7){\makebox(0,0)[c]{\tiny $4$}}
                    \put(10.4,2.7){\makebox(0,0)[c]{\tiny $5$}}
                    \put(13.4,2.7){\makebox(0,0)[c]{\tiny $6$}}
					\put(12.4,4.5){\makebox(0,0)[c]{\tiny $3$}}
					\put(15.5,2.7){\makebox(0,0)[c]{\tiny $4$}}
                    \put(18,2.7){\makebox(0,0)[c]{\tiny $2$}}
				\end{picture}
			\end{minipage}  \\ 
			\hline
			$F_4^{(1)}$ & 
			\begin{minipage}[c][0.06\textheight]{0.5\linewidth}
			\setlength{\unitlength}{0.17in}
			\begin{picture}(16,3.9)
					\put(4,2.1){\makebox(0,0)[c]{$\circ$}}
					\put(6,2.1){\makebox(0,0)[c]{$\circ$}}
					\put(8,2.1){\makebox(0,0)[c]{$\circ$}}
					\put(10,2.1){\makebox(0,0)[c]{$\circ$}}
					\put(12,2.1){\makebox(0,0)[c]{$\circ$}}
					\put(4.3,2.1){\line(1,0){1.3}}
					\put(6.3,2.1){\line(1,0){1.3}}
					\put(8.3,2.0){\line(1,0){1.3}}
					\put(8.3,2.2){\line(1,0){1.3}}
					\put(10.3,2.1){\line(1,0){1.3}}
					\put(8.6,1.88){\Large $\succ$}
					\put(4,1.3){\makebox(0,0)[c]{\tiny ${\alpha}_0$}}
					\put(6,1.3){\makebox(0,0)[c]{\tiny ${\alpha}_1$}}
					\put(8,1.3){\makebox(0,0)[c]{\tiny ${\alpha}_2$}}
					\put(10,1.3){\makebox(0,0)[c]{\tiny ${\alpha}_3$}}
					\put(12,1.3){\makebox(0,0)[c]{\tiny ${\alpha}_4$}}
					%
					\put(4,2.7){\makebox(0,0)[c]{\tiny $1$}}
                    \put(6,2.7){\makebox(0,0)[c]{\tiny $2$}}
					\put(8,2.7){\makebox(0,0)[c]{\tiny $3$}}
					\put(10,2.7){\makebox(0,0)[c]{\tiny $4$}}
					\put(12,2.7){\makebox(0,0)[c]{\tiny $2$}}
				\end{picture}
			\end{minipage} & $E_6^{(2)}$ &
			\begin{minipage}[c][0.06\textheight]{0.5\linewidth}
			\setlength{\unitlength}{0.17in}
			\begin{picture}(16,3.9)
					\put(4,2.1){\makebox(0,0)[c]{$\circ$}}
					\put(6,2.1){\makebox(0,0)[c]{$\circ$}}
					\put(8,2.1){\makebox(0,0)[c]{$\circ$}}
					\put(10,2.1){\makebox(0,0)[c]{$\circ$}}
					\put(12,2.1){\makebox(0,0)[c]{$\circ$}}
					\put(4.3,2.1){\line(1,0){1.3}}
					\put(6.3,2.1){\line(1,0){1.3}}
					\put(8.3,2.0){\line(1,0){1.3}}
					\put(8.3,2.2){\line(1,0){1.3}}
					\put(10.3,2.1){\line(1,0){1.3}}
					\put(8.6,1.88){\Large $\prec$}
					\put(4,1.3){\makebox(0,0)[c]{\tiny ${\alpha}_0$}}
					\put(6,1.3){\makebox(0,0)[c]{\tiny ${\alpha}_1$}}
					\put(8,1.3){\makebox(0,0)[c]{\tiny ${\alpha}_2$}}
					\put(10,1.3){\makebox(0,0)[c]{\tiny ${\alpha}_3$}}
					\put(12,1.3){\makebox(0,0)[c]{\tiny ${\alpha}_4$}}
					%
                    \put(4,2.7){\makebox(0,0)[c]{\tiny $1$}}
					\put(6,2.7){\makebox(0,0)[c]{\tiny $2$}}
					\put(8,2.7){\makebox(0,0)[c]{\tiny $3$}}
					\put(10,2.7){\makebox(0,0)[c]{\tiny $2$}}
					\put(12,2.7){\makebox(0,0)[c]{\tiny $1$}}
				\end{picture}
			\end{minipage}
			 \\ 
			\hline
			$G_2^{(1)}$ & 
			\begin{minipage}[c][0.06\textheight]{0.5\linewidth}
			\setlength{\unitlength}{0.17in}
			\begin{picture}(16,3.7)
					\put(6,2.1){\makebox(0,0)[c]{$\circ$}}
					\put(8,2.1){\makebox(0,0)[c]{$\circ$}}
					\put(10,2.1){\makebox(0,0)[c]{$\circ$}}
					\put(6.3,2.1){\line(1,0){1.3}}
					\put(8.3,1.9){\line(1,0){1.3}}
					\put(8.3,2.1){\line(1,0){1.3}}
					\put(8.3,2.3){\line(1,0){1.3}}
					\put(8.5,1.84){\LARGE $\succ$}
					\put(6,1.3){\makebox(0,0)[c]{\tiny ${\alpha}_0$}}
					\put(8,1.3){\makebox(0,0)[c]{\tiny ${\alpha}_1$}}
					\put(10,1.3){\makebox(0,0)[c]{\tiny ${\alpha}_2$}}
					%
                    \put(6,2.7){\makebox(0,0)[c]{\tiny $1$}}
					\put(8,2.7){\makebox(0,0)[c]{\tiny $2$}}
					\put(10,2.7){\makebox(0,0)[c]{\tiny $3$}}
				\end{picture}
			\end{minipage} & $D_4^{(3)}$ & 
			\begin{minipage}[c][0.06\textheight]{0.5\linewidth}
			\setlength{\unitlength}{0.17in}
			\begin{picture}(16,3.7)
					\put(6,2.1){\makebox(0,0)[c]{$\circ$}}
					\put(8,2.1){\makebox(0,0)[c]{$\circ$}}
					\put(10,2.1){\makebox(0,0)[c]{$\circ$}}
					\put(6.3,2.1){\line(1,0){1.3}}
					\put(8.3,1.9){\line(1,0){1.3}}
					\put(8.3,2.1){\line(1,0){1.3}}
					\put(8.3,2.3){\line(1,0){1.3}}
					\put(8.5,1.84){\LARGE $\prec$}
					\put(6,1.3){\makebox(0,0)[c]{\tiny ${\alpha}_0$}}
					\put(8,1.3){\makebox(0,0)[c]{\tiny ${\alpha}_1$}}
					\put(10,1.3){\makebox(0,0)[c]{\tiny ${\alpha}_2$}}
					%
                    \put(6,2.7){\makebox(0,0)[c]{\tiny $1$}}
					\put(8,2.7){\makebox(0,0)[c]{\tiny $2$}}
					\put(10,2.7){\makebox(0,0)[c]{\tiny $1$}}
				\end{picture}
			\end{minipage}
			\\ 
			\hline
		\end{tabular}
	}
	\label{tab:dynkins}
\end{table}

\providecommand{\bysame}{\leavevmode\hbox to3em{\hrulefill}\thinspace}
\providecommand{\MR}{\relax\ifhmode\unskip\space\fi MR }
\providecommand{\MRhref}[2]{%
  \href{http://www.ams.org/mathscinet-getitem?mr=#1}{#2}
}
\providecommand{\href}[2]{#2}

\end{document}